\newtheorem{thm}{Theorem}[section]
\newtheorem{cor}[thm]{Corollary}
\newtheorem{lem}[thm]{Lemma}
\newtheorem{prop}[thm]{Proposition}
\newtheorem*{thm.2.6}{Theorem 2.6}
\newtheorem*{thm.2.7}{Theorem 2.7}
\newtheorem*{thm.4.4}{Theorem 4.4}
\newtheorem*{thm.5.3}{Theorem 5.3}
\newtheorem*{thm.6.3}{Theorem 6.3}
\newtheorem*{thm.6.4}{Theorem 6.4}
\theoremstyle{definition}
\newtheorem{defin}[thm]{Definition}
\newtheorem{rem}[thm]{Remark}
\newtheorem*{claim}{Claim}
\newtheorem*{claim1}{Claim 1}
\newtheorem*{claim2}{Claim 2}
\newtheorem*{claim3}{Claim 3}
\newtheorem*{claim4}{Claim 4}
\newtheorem*{claim5}{Claim 5}
\newtheorem{problem}[thm]{Problem}
\newtheorem*{notn}{Notation}
\numberwithin{equation}{section}
\newcommand{\Fin}{\mathrm{Fin}}
\newcommand{\al}{\alpha}
\newcommand{\om}{\omega}
\newcommand{\sse}{\subseteq}
\newcommand{\contains}{\supseteq}
\newcommand{\FIN}{\mathrm{FIN}}
\newcommand{\rgl}{\rangle}
\newcommand{\lgl}{\langle}
\newcommand{\re}{\restriction}
\newcommand{\dom}{\mathrm{dom}\,}
\newcommand{\ra}{\rightarrow}
\newcommand{\Llra}{\Longleftrightarrow}
\newcommand{\rank}{\mathrm{rank}}
\begin{document}


\baselineskip=17pt



\title[Canonical cofinal maps]{Continuous and other finitely generated canonical cofinal maps on ultrafilters}

\author[N. Dobrinen]{Natasha Dobrinen}
\address{Department of Mathematics\\
 University of Denver \\
C.M.\ Knudson Hall, Room 300\\
2390 S.\ York St.\\ Denver, CO \ 80208 U.S.A.}
\email{natasha.dobrinen@du.edu}
\urladdr{http://cs.du.edu/~ndobrine}
\date{}

\thanks{This work was partially supported by National Science Foundation Grants DMS-142470 and DMS-1600781,
Simons Foundation Collaboration Grant  245286, and a University of Denver Faculty Research Grant}

\begin{abstract}
This paper investigates conditions under which canonical cofinal maps of the  following three types exist: continuous,   generated by finitary  end-extension preserving  maps, and generated by  finitary maps.
The  main theorems  prove that every monotone cofinal map on an ultrafilter from a certain class of ultrafilters is actually canonical when restricted to some cofinal subset.
These theorems are then applied to find connections between Tukey, Rudin-Keisler, and Rudin-Blass reducibilities on large classes of ultrafilters.

The main theorems on canonical cofinal maps are the following.
Under a mild assumption, basic Tukey reductions are inherited under  Tukey reduction.
In particular, every ultrafilter Tukey reducible to a p-point has continuous Tukey reductions.
If $\mathcal{U}$ is a  Fubini iterate of p-points, then  each monotone cofinal map  from $\mathcal{U}$ to some other ultrafilter is generated (on a cofinal subset of $\mathcal{U}$) by a finitary map on the base tree for $\mathcal{U}$ which is monotone and end-extension preserving - the analogue of continuous in this context.
Further, every ultrafilter which is Tukey reducible to some Fubini iterate of  p-points has finitely generated cofinal maps.
Similar theorems also hold for some other classes of ultrafilters.
\end{abstract}

\subjclass[2010]{Primary 03E04, 03E05, 03E35, 06A07; Secondary 54A20, 54D99}

\keywords{ultrafilter,  continuous cofinal map, finitely generated cofinal map, Tukey, Rudin-Keisler, Rudin-Blass}

\maketitle

\section{Introduction}\label{sec.intro}

In this paper, ultrafilters on countable base sets are considered to be partially ordered by reverse inclusion.
A map from an ultrafilter $\mathcal{U}$ to another ultrafilter $\mathcal{V}$ is {\em cofinal} if every image of a cofinal subset  of $\mathcal{U}$ is a  cofinal subset of  $\mathcal{V}$.
We say that $\mathcal{V}$ is {\em Tukey reducible} to $\mathcal{U}$ and write $\mathcal{V}\le_T\mathcal{U}$ if and only if there is a cofinal map from $\mathcal{U}$ to $\mathcal{V}$.
When $\mathcal{U}\le_T\mathcal{V}$ and
$\mathcal{V}\le_T\mathcal{U}$, then we say that $\mathcal{U}$ is {\em Tukey equivalent to} $\mathcal{V}$ and write $\mathcal{U}\equiv_T\mathcal{V}$.
It is clear that $\equiv_T$ is an equivalence relation, and $\le_T$ on the equivalence classes forms a partial ordering.
The equivalence classes are called {\em Tukey types}.
We point out  that  since $\contains$ is a directed partial ordering on an ultrafilter,
two ultrafilters are Tukey equivalent if and only if they are  {\em cofinally similar};  that is, there is a partial ordering  into which they both embed as cofinal subsets (see \cite{Tukey40}).
Thus, for ultrafilters, Tukey equivalence is the same as cofinal similarity.
An equivalent formulation of Tukey reducibility, noticed by Schmidt in \cite{Schmidt55},
shows that $\mathcal{V}\le_T\mathcal{U}$ if and only if there is a {\em Tukey map} from $\mathcal{V}$ to $\mathcal{U}$; that is a map $g:\mathcal{V}\ra\mathcal{U}$ such that every unbounded (with respect to the partial ordering $\contains$) subset of $\mathcal{V}$ is unbounded in $\mathcal{U}$.

This paper focuses on the existence of canonical cofinal maps of three types: continuous,  approximated by {\em basic} maps (end-extension and level  preserving finitary maps - see Definitions \ref{defn.basic}  and \ref{defn.basicTredtree}), and approximated by monotone finitary maps (see Definition \ref{defn.finitaryTred}).
In each of these cases, the original cofinal map is generated by the approximating finitary map.

The notion of  Tukey reducibility between two  directed partial orderings was first introduced by Tukey in
 \cite{Tukey40}
to study the Moore-Smith theory of net convergence in topology.
This naturally led to investigations of Tukey types of more general partial orderings, directed and later non-directed.
These investigations
 often reveal useful information for the comparison of different partial orderings.
For example, Tukey reducibility  preserves calibre-like properties, such as the countable chain condition, property K, precalibre $\aleph_1$, $\sigma$-linked, and $\sigma$-centered (see \cite{Todorcevic96}).
For more on classification theories of Tukey types for certain classes of ordered sets, we refer the reader to \cite{Tukey40},
\cite{Day44}, \cite{Isbell65}, \cite{TodorcevicDirSets85}, and
\cite{Todorcevic96}.
As the  focus of this paper is canonical cofinal maps on ultrafilters, and as we have recently written a survey article giving an overview of the   motivation and  the state of the art of the Tukey theory of ultrafilters
(see \cite{DobrinenTukeySurvey15}),
we   present here only the background and motivations relevant for this work.

For ultrafilters, we may restrict our attention to monotone cofinal maps.
A map $f:\mathcal{U}\ra\mathcal{V}$ is {\em monotone}  if
 for any $X,Y\in\mathcal{U}$, $X\contains Y$ implies $f(X)\contains f(Y)$.
It is not hard to show that whenever $\mathcal{U}\ge_T\mathcal{V}$, then there is a \em monotone \rm cofinal map witnessing this (see Fact 6 of \cite{Dobrinen/Todorcevic11}).

As cofinal maps between ultrafilters have domain and range of size continuum,
a priori, the Tukey type of an ultrafilter may have size $2^{\mathfrak{c}}$.
Indeed, this is the case for ultrafilters which have the maximum Tukey type $([\mathfrak{c}]^{<\om},\sse)$.
However, if an ultrafilter has the property that every Tukey reduction from it to another ultrafilter may be witnessed by a continuous map,
then it follows that its Tukey type, as well as the Tukey type of each ultrafilter Tukey reducible to it, has size at most continuum.
This is the case for p-points.

\begin{defin}\label{defn.p-point}
An ultrafilter $\mathcal{U}$ on $\om$ is a \em p-point \rm iff for each decreasing sequence $X_0\contains X_1\contains \dots$ of elements of $\mathcal{U}$, there is an $U\in\mathcal{U}$
such that $U\sse^* X_n$, for all $n<\om$.
\end{defin}

\begin{defin}\label{defn.ctsandfinitary}
An ultrafilter   $\mathcal{U}$ on $\om$
 {\em has continuous Tukey reductions}  if whenever  $f:\mathcal{U}\ra\mathcal{V}$ is a monotone cofinal map, there is a cofinal subset $\mathcal{C}\sse\mathcal{U}$ such that $f\re\mathcal{C}$ is continuous.
\end{defin}

The following Theorem 20 in \cite{Dobrinen/Todorcevic11} has provided a fundamental tool for all subsequent   research on the classification of Tukey types of p-points.

\begin{thm}[Dobrinen/Todorcevic \cite{Dobrinen/Todorcevic11}]\label{thm.ppoint}
Suppose $\mathcal{U}$ is a p-point on $\om$.
 Given a monotone cofinal map $f$ from $\mathcal{U}$ into another ultrafilter,
 there is an $X\in \mathcal{U}$ such that
 $f$ is continuous on $\mathcal{C}=\{Y\in\mathcal{U}:Y\sse X\}$.
In particular, $\mathcal{U}$ has continuous Tukey reductions.
Moreover, these continuous Tukey reductions are generated by  monotone basic  functions.
\end{thm}

\begin{rem}\label{rem.cts}
The proof of Theorem 20 in \cite{Dobrinen/Todorcevic11}  shows that p-points have the stronger property of monotone basic Tukey reductions (see Definition \ref{defn.basic}).
\end{rem}

It was later proved by   Raghavan\footnote{In this joint paper of Raghavan and Todorcevic, it is clearly stated which results in the paper are due to each of the authors.  In accordance with the wishes of the second author expressly stated to the author of this paper in 2011,
whenever citing results from \cite{Raghavan/Todorcevic12}, we clearly state to whom the result is due.}\  in \cite{Raghavan/Todorcevic12} that any ultrafilter Tukey reducible to some basically generated ultrafilter
has Tukey type of cardinality at most $\mathfrak{c}$.
The notion of a basically generated ultrafilter first appeared as Definition 15 in \cite{Dobrinen/Todorcevic11}.
The
 slightly modified version from \cite{Raghavan/Todorcevic12}
 will be used in this paper, as it will simplify  statements.
In this paper, by  a {\em filter base} for an ultrafilter $\mathcal{U}$, we mean a cofinal subset of $\mathcal{U}$ which is closed under finite intersections.

\begin{defin}[Definition 13, \cite{Raghavan/Todorcevic12}]\label{def.bg}
An ultrafilter $\mathcal{U}$ on $\om$ is {\em basically generated} if it has a filter base $\mathcal{B}\sse\mathcal{U}$ such that each sequence $\lgl A_n:n<\om\rgl$ of members of $\mathcal{B}$ converging to another member of $\mathcal{B}$ has a subsequence whose intersection is in $\mathcal{U}$.
\end{defin}

It was shown in  \cite{Dobrinen/Todorcevic11} that the class of basically generated ultrafilters contains all p-points and is closed under taking Fubini products.
It is still unknown whether the class of all Fubini iterates of p-points is the same as or strictly contained in the class of all basically generated ultrafilters.

Continuous cofinal maps provide one of the main  keys to  the analysis of
the structure of the Tukey types of p-points  (see for instance \cite{Dobrinen/Todorcevic11}, \cite{Raghavan/Todorcevic12}  and \cite{Raghavan/Shelah17}).
Moreover, continuous cofinal maps are   crucial to
providing a mechanism for applying Ramsey-classification theorems
 on barriers to
classify the initial Tukey structures and Rudin-Keisler structures within these for a large class of p-points:
selective ultrafilters in
 \cite{Raghavan/Todorcevic12};
weakly Ramsey ultrafilters and a large hierarchy of
 rapid p-points satisfying partition relations in
 \cite{Dobrinen/Todorcevic14} and \cite{Dobrinen/Todorcevic15};
 and $k$-arrow ultrafilters, hypercube ultrafilters, and a large class of p-points constructed using products of  Fra\"{i}ss\'{e}\ classes satisfying the Ramsey property in
\cite{Dobrinen/Mijares/Trujillo14}.
Similar methods were developed for a hierarchy of non-p-points above a Ramsey ultrafilter in \cite{DobrinenJSL15}.

Continuous cofinal maps are also used in
the following theorem, which reveals the surprising fact that the Tukey and Rudin-Blass orders sometimes coincide.
Recall that $\mathcal{V}\le_{RB}\mathcal{U}$ if and only if there is  a finite-to-one map $f:\om\ra\om$ such that $\mathcal{V}=f(\mathcal{U})$, where $f(\mathcal{U})$ is defined to be the set of all $X\sse\om$ such that $f^{-1}[X]$ is a member of $\mathcal{U}$.
The following is Theorem 10 in \cite{Raghavan/Todorcevic12}.

\begin{thm}[Raghavan \cite{Raghavan/Todorcevic12}]\label{thm.RT}
Let $\mathcal{U}$ be any ultrafilter and let $\mathcal{V}$ be a q-point.
If $\mathcal{V}\le_T\mathcal{U}$ and this is witnessed by
 a continuous, monotone cofinal map from $\mathcal{U}$ to $\mathcal{V}$,
then  $\mathcal{V}\le_{RB}\mathcal{U}$.
\end{thm}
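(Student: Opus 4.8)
The plan is to convert the given continuous monotone cofinal map $f\colon\mathcal U\to\mathcal V$ into a finite-to-one function $g\colon\om\to\om$ with $g(\mathcal U)=\mathcal V$: continuity and monotonicity of $f$ supply a finitary description of $f$, and the q-point property of $\mathcal V$ is exactly what lets the finitary data be organized so that $g$ is genuinely finite-to-one. For the finitary description, for a finite $s\sse\om$ set $\hat f(s)=\{n<\om : n\in f(U)\text{ for every }U\in\mathcal U\text{ having }s\text{ as an initial segment}\}$; since every finite $s$ is an initial segment of the cofinite set $s\cup[\max(s)+1,\om)\in\mathcal U$, this is never vacuous. Local constancy of $U\mapsto[\,n\in f(U)\,]$ (continuity), together with monotonicity, shows that $\hat f$ is monotone and end-extension preserving and that $f(U)=\bcup\{\hat f(s):s\text{ an initial segment of }U\}$ for every $U\in\mathcal U$; this is essentially the basic representation of Definition~\ref{defn.basic}. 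Equivalently, along the increasing enumeration $u_0<u_1<\cdots$ of any $U\in\mathcal U$, the finite sets $\hat f(\{u_0,\dots,u_{m-1}\})$ increase with $m$ to $f(U)$.

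Next I would reduce the goal to a partition problem. It suffices to produce a partition $\{P_n:n<\om\}$ of $\om$ into finite pieces such that $\bcup_{n\in f(U)}P_n\in\mathcal U$ for every $U\in\mathcal U$. Indeed, the map $g$ with $g^{-1}(\{n\})=P_n$ is then finite-to-one; since $\{f(U):U\in\mathcal U\}$ generates $\mathcal V$ (cofinality of $f$), the displayed condition upgrades to $g^{-1}(A)\in\mathcal U$ for every $A\in\mathcal V$, and since $g(\mathcal U)$ and $\mathcal V$ are both ultrafilters this forces $g(\mathcal U)=\mathcal V$, i.e.\ $\mathcal V\le_{RB}\mathcal U$. (Replacing $\mathcal V$ by $\mathcal V\restriction V_0$ for a suitable $V_0\in\mathcal V$ changes nothing here, since such a restriction is Rudin-Blass equivalent to $\mathcal V$.)

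The construction of $\{P_n\}$ is where the q-point hypothesis enters. One builds two matching interval partitions of $\om$. On the range side, applying the q-point property of $\mathcal V$ to a suitably chosen interval partition yields a selector $V_0\in\mathcal V$ meeting each interval in at most one point; by cofinality of $f$ we may restrict attention to those $U\in\mathcal U$ with $f(U)\sse V_0$. On the domain side, one uses $\hat f$ (and $V_0$) to choose an interval partition $\{J_n:n<\om\}$ coarse enough that once an initial segment of a set in $\mathcal U$ reaches $J_n$, the part of $\hat f$ relevant to the $n$-th range interval has been resolved; then one relabels so that $g$ collapses $J_n$ onto the unique point of $V_0$ in the $n$-th range interval prescribed by $\hat f$. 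The verification that $\bcup_{n\in f(U)}P_n\in\mathcal U$ rests on end-extension monotonicity of $\hat f$: every $U\in\mathcal U$ meets cofinally many blocks $J_n$, and each block met by a sufficiently late element of $U$ is forced to contribute its candidate point to $f(U)$, so $\bcup_{n\in f(U)}P_n$ contains a tail of $U$ inside the pullback of $V_0$, which lies in $\mathcal U$.

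The main obstacle is precisely this coordination. Because the finitary behaviour of $f$ on $U\in\mathcal U$ is governed by where the elements of $U$ sit in $U$'s own enumeration rather than in $\om$, no single point map is visible in $f$ directly, and the two interval partitions must be balanced so that the domain blocks are coarse enough for $\hat f$ to have stabilized on the relevant coordinates yet fine enough for the selector $V_0$ still to meet each corresponding range block at most once. This is also where the q-point hypothesis is used essentially rather than cosmetically: by Theorem~\ref{thm.ppoint} even all Tukey reductions between p-points are continuous, while a Tukey reduction between p-points need not be a Rudin-Keisler (let alone Rudin-Blass) reduction, so continuity of the cofinal map alone is far from enough, and the q-point assumption cannot be dropped.
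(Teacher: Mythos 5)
Your overall architecture is the right one, and it is essentially the architecture of Raghavan's proof (which this paper only quotes; the closest in-house analogue is the proof of Theorem \ref{thm.RBonBhat}): extract a monotone, end-extension-preserving finitary representation $\hat{f}$ from continuity and monotonicity, build an increasing sequence of cut-off points, use the q-point hypothesis to select one point $v_i$ from each range interval, and collapse a corresponding domain block $J_i$ to $v_i$. Your reduction of the theorem to producing a finite-to-one $g$ with $g(\mathcal{U})=\mathcal{V}$ is also logically sound (both inclusions between the two ultrafilters $g(\mathcal{U})$ and $\mathcal{V}$ suffice, by maximality). One small omission: you need each $\hat{f}(s)$ to be \emph{finite} for the two interval partitions to be interleaved at all; this is Raghavan's Lemma 8, and it does follow from the level-preserving form of the basic representation, but as you define $\hat{f}(s)$ (an intersection over all $U\sqsupseteq s$) finiteness is not automatic and should be addressed.

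The genuine gap is in the verification step. You justify $\bigcup_{n\in f(U)}P_n\in\mathcal{U}$ by asserting that ``each block met by a sufficiently late element of $U$ is forced to contribute its candidate point to $f(U)$,'' i.e.\ that $U\cap J_i\ne\emptyset$ forces $v_i\in f(U)$. That implication points the wrong way and is false in general: continuity and monotonicity only control which finite configurations force a point \emph{into} $f(U)$, and a single element of $U$ landing in $J_i$ need not force anything (for instance, $f$ may put $v_i$ into $f(U)$ only when $U$ contains two points of some region, in which case a $U$ hitting each such region exactly once defeats your tail argument). What the interval construction actually delivers --- property (c) in the proof of Theorem \ref{thm.RBonBhat}, i.e.\ Raghavan's Lemma 9 --- is the \emph{converse}: if some finite approximation to $U$ forces $v_i$ into $f(U)$, then that approximation must already meet $J_i$. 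This gives $f(U)\cap V\sse g(U)$, hence $g(U)\in\mathcal{V}$ for every $U\in\mathcal{U}$, hence $g(\mathcal{U})\sse\mathcal{V}$ and equality by maximality. Your chosen sufficient condition $\bigcup_{n\in f(U)}P_n\in\mathcal{U}$ amounts to the opposite inclusion $\mathcal{V}\sse g(\mathcal{U})$; it is true once the theorem is proved, but it is not what the construction verifies directly, and the mechanism you offer for it does not work. To repair the proof, keep your construction but switch the verification to the contrapositive form: arrange the blocks so that $v_i\notin\hat{f}(s)$ whenever $s$ misses $J_i$ (this is where properties like $h(i)<k_{v_i}$ and $v_i\notin\hat{f}(U\re h(i))$ enter), and then conclude $g(U)\contains f(U)\cap V\in\mathcal{V}$.
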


In Section \ref{sec.pres},
we  prove in Theorem \ref{thm.PropB} that, under a mild assumption, the property of having basic cofinal maps is inherited under Tukey reduction.
The proof uses the Extension Lemma \ref{thm.PropACtsMaps} showing that   any basic monotone map on a cofinal subset of an ultrafilter may be extended to a basic monotone map on all of $\mathcal{P}(\om)$.
In particular, p-points satisfy the mild assumption;
hence we obtain the following theorem.

\begin{thm.2.6}
Every ultrafilter Tukey reducible to a  p-point
 has basic, and hence continuous, Tukey reductions.
\end{thm.2.6}

 Combined, Theorems \ref{thm.RT} and \ref{thm.main} imply the following.

\begin{thm.2.7}
If $\mathcal{U}$ is Tukey reducible to some p-point, then any q-point Tukey below $\mathcal{U}$ is actually Rudin-Blass below $\mathcal{U}$.
\end{thm.2.7}

The rest of the paper involves finding the analogues of Theorems \ref{thm.ppoint} and 2.6
for countable iterations of Fubini products of p-points and applying them to connect Tukey reduction with Rudin-Keisler and Rudin-Blass reductions.
We now delineate these results.

Section \ref{sec.Fubit}
is a primer, explicitly showing how any countable iteration of Fubini products of p-points, which we also simply call  a {\em Fubini iterate of p-points}, can be viewed as an ultrafilter generated by trees on a  front on $\om$.
This precise way of viewing Fubini iterates of p-points sets the stage for finding the analogue of Theorem \ref{thm.ppoint} for this more general class of ultrafilters.
It is not possible to show that Fubini iterates of p-points have continuous Tukey reductions.
In particular, 
Corollary 11 of Raghavan  in \cite{Raghavan/Todorcevic12} 
shows that the Fubini product of two 
 non-isomorphic selective ultrafilters does not have a continuous cofinal map into one of the selective ultrafilters. 
However, 
we will  show that  key properties of continuous maps still hold for this class of ultrafilters.

In Section \ref{sec.BasicCofinalFubit}
we define  the notion of  a {\em basic map} for Fubini iterates, which is in particular an end-extension preserving  map from finite subsets of  the tree $\hat{B}$ of initial segments  of  members of   a  front $B$ into finite subsets of $\om$ (see
Definition \ref{defn.basicTredtree}).
This is the analogue of continuity for Fubini iterates of p-points.
One of the main results of this paper is the following.

 \begin{thm.4.4} Fubini iterates of p-points have basic Tukey reductions.
\end{thm.4.4}

Thus, monotone cofinal maps on Fubini iterates of p-points are continuous, with respect to the product topology on the space  $2^{\hat{B}}$.
As basic maps on  fronts have the key property (end-extension preserving) of continuous maps used to convert Tukey reduction to Rudin-Keisler reduction in
\cite{Raghavan/Todorcevic12},
\cite{Dobrinen/Todorcevic14}, \cite{Dobrinen/Todorcevic15},  \cite{Dobrinen/Mijares/Trujillo14}, and \cite{DobrinenJSL15}, it seems likely that they will play a crucial role in obtaining similar results for ultra-Ramsey spaces of Chapter 6 of Todorcevic's book \cite{TodorcevicBK10}.

Sections \ref{sec.directapp} and \ref{sec.finitegen} contain applications of Theorem \ref{thm.allFubProd_p-point_cts} to a broad class of ultrafilters.
In Section \ref{sec.directapp}, we directly apply Theorem   \ref{thm.allFubProd_p-point_cts}  to obtain an analogue of  Theorem 10 of Raghavan in \cite{Raghavan/Todorcevic12}.
In Theorem \ref{thm.RBonBhat}, we prove that if $\mathcal{U}$ is a Fubini iterate of p-points and $\mathcal{V}$ is a q-point Tukey reducible to $\mathcal{U}$,
then there is a finite-to-one map on a large subset of $\hat{B}$, where $B$ is the  front base for $\mathcal{U}$, such that its image on $\mathcal{U}$ generates a subfilter of $\mathcal{V}$.
One of the consequences of this is  the following.

\begin{thm.5.3}
Suppose $\mathcal{U}$ is a finite iteration of Fubini products of p-points.
If $\mathcal{V}$ is a q-point and
 $\mathcal{V}\le_T\mathcal{U}$, then
$\mathcal{V}\le_{RK}\mathcal{U}$.
\end{thm.5.3}

This improves one aspect of Corollary 56  of Raghavan in \cite{Raghavan/Todorcevic12} as  $\mathcal{V}$ is only required to be a q-point, not a selective ultrafilter.
The improvement though comes at the expense of
limiting $\mathcal{U}$ to a finite Fubini iterate of p-points.  It is unknown whether this can be extended to all Fubini iterates of p-points.

In Section \ref{sec.finitegen}
we prove the analogue of Theorem  \ref{thm.PropB}
for ultrafilters Tukey reducible to a Fubini iterate of p-points.
(See Definition \ref{defn.finitaryTred} for
{\em finitely generated Tukey reductions}.)

\begin{thm.6.3}\label{thm.FinitelyRepTukeyRed}
Let $\mathcal{U}$ be any Fubini iterate of  p-points.
If $\mathcal{V}\le_T\mathcal{U}$, then
$\mathcal{V}$  has finitely generated Tukey reductions.
\end{thm.6.3}

These finitary maps are an improvement on the maps $\psi_{\varphi}$ used  in \cite{Raghavan/Todorcevic12} (see Definition 7 in  \cite{Raghavan/Todorcevic12})
in the sense that our finitary maps are shown to  generate the original cofinal maps.
Theorem \ref{thm.FinitelyRepTukeyRed} is used to extend Theorem 17 of Raghavan in \cite{Raghavan/Todorcevic12} to the class of all ultrafilters Tukey reducible to some Fubini iterate of p-points, in contrast to his result where $\mathcal{U}$ is assumed to be basically generated.
It is still open whether every ultrafilter Tukey reducible to a
p-point is basically generated (see discussion around Problem \ref{prob.bgTdownwardinherit}), as the class of  basically generated ultrafilters and the class of ultrafilters Tukey reducible to a Fubini iterate of p-points may be very different.

\begin{thm.6.4}\label{thm.rextension}
If $\mathcal{U}$ is Tukey reducible to a   Fubini iterate of p-points, then
for each $\mathcal{V}\le_T\mathcal{U}$, there is a filter $\mathcal{U}(P)\equiv_T\mathcal{U}$ such that $\mathcal{V}\le_{RK}\mathcal{U}(P)$.
\end{thm.6.4}

The paper closes with a list of open problems in Section \ref{sec.op}.

The results in Sections \ref{sec.pres}, \ref{sec.Fubit} and \ref{sec.BasicCofinalFubit} were completed in 2010, presented  at the Logic Colloquium in Paris that year, and have appeared in  the unpublished preprint \cite{Dobrinen10}.
The present paper includes much revised presentations and proofs  of those results, new extensions of them,  and additional applications.

\bf Acknowledgement. \rm 
Profuse thanks go to the referee, whose thorough reading and suggestions greatly improved this paper, especially the presentation in Section 4.


\section{Basic Tukey reductions  inherited under Tukey reducibility}\label{sec.pres}

One of the crucial  tools
used to determine the  structure of the Tukey types of p-points is the existence of continuous cofinal maps (Theorem 20 in \cite{Dobrinen/Todorcevic11}).
Continuity contributes to the analysis of the structure of the Tukey types of p-points by essentially reducing the number of cofinal maps under consideration from $2^{\mathfrak{c}}$ to $\mathfrak{c}$, with the immediate consequence that there are at most $\mathfrak{c}$ many ultrafilters Tukey reducible to any p-point.
Continuity further contributes to finding exact Tukey and Rudin-Keisler structures below certain classes of p-points satisfying partition relations.
The fact that each monotone cofinal map on a p-point  is  approximated by a finitary  end-extension preserving function
 is what
allows for  application of Ramsey-classification theorems to find
the exact Tukey and Rudin-Keisler structures below the p-points  forced by certain topological Ramsey spaces (see \cite{Raghavan/Todorcevic12},
\cite{Dobrinen/Todorcevic14}, \cite{Dobrinen/Todorcevic15}, and
\cite{Dobrinen/Mijares/Trujillo14}).
Other
 applications of cofinal maps represented by finitary end-extension preserving maps
 appear in Dobrinen's  contributions in \cite{Blass/Dobrinen/Raghavan15},  and further
  in \cite{DobrinenJSL15} where the precise Tukey and Rudin-Keilser structures below
 ultrafilters forced by $\mathcal{P}(\om^k)/\mathrm{Fin}^{\otimes k}$
 were found.

The notion of a basic map is a strengthening of continuity, and is the same as continuity when the domain
 is a compact subset of $2^{\om}$ (see Definition \ref{defn.basic} below).
The Extension Lemma
\ref{thm.PropACtsMaps} shows that all basic Tukey reductions on some cofinal subset of an ultrafilter extend to a basic map on $\mathcal{P}(\om)$.
This will be employed   in the proof of the main theorem of this section, Theorem \ref{thm.PropB}, which shows  that, under mild assumptions, the property of having basic Tukey reductions is inherited under Tukey reducibility.
  Theorem \ref{thm.main} then follows:
Every ultrafilter Tukey reducible to a p-point has  basic, and hence continuous, Tukey reductions.
Combining  Theorem \ref{thm.main} with Theorem 10 of Raghavan in \cite{Raghavan/Todorcevic12}
yields that whenever $\mathcal{W}$ is Tukey reducible to a p-point and $\mathcal{V}$ is a q-point, then $\mathcal{W}\ge_T\mathcal{V}$ implies $\mathcal{W}\ge_{RB}\mathcal{V}$
(see Theorem
\ref{thm.dr}).

We begin with some basic definitions.
The following standard notation is used:
$2^{<\om}$  denotes the collection of finite sequences  $s:n\ra 2$, for $n<\om$.
We use $s,t,u,\dots$ to denote members of $2^{<\om}$.
Note that given  $s,t\in 2^{<\om}$,
the inclusion $s\sse t$ simply means that $s$ is an initial segment of $t$; that is,
$\dom(s)\le\dom(t)$ and
$t\re\dom(s)=s$.
Write $s\subset t$ if and only if $s\sse t$ and $s\ne t$; that is, $s$ is a proper initial segment of $t$.
We  use $a\sqsubseteq X$ for sets $a,X\sse\om$ to denote that, given their strictly increasing enumerations, $a$  is an initial segment of $X$.
$a\sqsubset X$ denotes that $a$ is a proper initial segment of $X$.

We would like to identify subsets of $\om$ with their characteristic functions, as this will greatly simplify notation in the proofs.
For
 $X\sse \om$,
we let $\chi_X$ denote the characteristic function of $X$ with domain $\om$. 
If $X$ is infinite, we shall often abuse notation and use $X$ to denote both the set $X$ and its characteristic function $\chi_X$.
Given $m<\om$,
we  let $\chi_X\re m$ denote the characteristic function of $X\cap m$ with domain $m$.
Whenever no ambiguity arises, we shall
 abuse notation and use $X\re m$ to denote both the characteristic function $\chi_X\re m$ and the set $X\cap m$.
For precision throughout, for $s\in 2^{<\om}$,
we use the notation $[[s]]$ to denote $s^{-1}(\{1\})$, the set of $i$ in the domain of $s$ for which $s(i)=1$.
For pointwise images of a function $f$ on a set $S$, we shall use the notation $f[S]$ to denote $\{f(x):x\in S\}$.
In Sections \ref{sec.Fubit} and following,
for a tree $T\sse [\om]^{<\om}$,
 we shall use $[T]$ to denote the set of all {\em branches} through $T$, meaning the collection of all
 maximal branches (if there are any finite maximal branches) and all cofinal branches (if there are any infinite branches)  through $T$.
This will not cause any ambiguity.

\begin{defin}\label{def.levinitpres}
Given  a subset $C$ of $2^{<\om}$,
we shall call a map $\hat{f}:C\ra 2^{<\om}$ \em level preserving \rm if
there is a strictly increasing sequence  $(k_m)_{m<\om}$ such that
$\hat{f}$ takes each member of
 $C\cap 2^{k_m}$ to a member of  $2^m$.
A level preserving map $\hat{f}$ is \em end-extension preserving \rm if whenever $m<n$,  $s\in C\cap 2^{k_m}$, and  $t\in C\cap 2^{k_{n}}$, then  $s\subseteq t$ implies $\hat{f}(s)\subseteq\hat{f}(t)$.
A level and end-extension preserving map shall be called {\em basic}.
 A level-preserving map
$\hat{f}$ is
{\em monotone} if for each $m<\om$ and  $s,t\in C\cap 2^{k_m}$,
$[[s]]\sse [[t]]$ implies $[[\hat{f}(s)]]\sse [[\hat{f}(t)]]$.
\end{defin}

\begin{defin}\label{defn.basic}
Let $f$ be a map from a subset $\mathcal{C}\sse 2^{\om}$ into $2^{\om}$.
We say that  $f$ is  {\em  represented by
a basic map}
if there is a strictly
increasing sequence $(k_m)_{m<\om}$ such that, letting
\begin{equation}
C=\{X\re k_m: X\in\mathcal{C},\ m<\om\},
\end{equation}
there is a basic map $\hat{f}:C\ra 2^{<\om}$
such that
for each $X\in\mathcal{C}$,
\begin{equation}
f(X)=\bigcup_{m<\om} \hat{f}(X\re k_m).
\end{equation}
In this case, we say that $\hat{f}$ \em generates \rm $f$.
If  each monotone cofinal function from an ultrafilter $\mathcal{U}$ to another ultrafilter is represented by a basic map on some cofinal subset of $\mathcal{U}$,
then we say that $\mathcal{U}$ {\em has basic Tukey reductions}.
\end{defin}

Note  that if $f\re\mathcal{C}$ is generated by a basic  map
$\hat{f}$,  then
for each $X\in\mathcal{C}$ and $m<\om$,
$f(X)\re m= \hat{f}(X\re k_m)$.
This fact will be used throughout the paper.

For a set $C\sse 2^{<\om}$, $[C]$ denotes the set of all branches through $C$.
If $C=\{X\re k_m:X\in\mathcal{C},\ m<\om\}$,
where $\mathcal{C}\sse 2^{\om}$ and $(k_m)_{m<\om}$ is a strictly increasing sequence,
then $[C]\sse 2^{\om}$.

\begin{lem}\label{lem.basic1}
Let  $\hat{f}$ be a monotone basic map with domain  $C\sse 2^{<\om}$.
Then $\hat{f}$
induces a monotone
 continuous map $f^*$ on $[C]$ by $f^*(X)=\bigcup_{m<\om}\hat{f}(X\re k_m)$, for $X\in[C]$.
Further, if  $\hat{f}$ generates $f$ on $\mathcal{C}$ and $C\contains\{X\re k_m:X\in\mathcal{C},\ m<\om\}$, then $f^*\re\mathcal{C}=f\re\mathcal{C}$; hence $f\re \mathcal{C}$ is continuous.
\end{lem}

\begin{proof}
That $f^*$ is continuous on $[C]$ is trivial, since $\hat{f}$ is  basic.
Since $\hat{f}$ is monotone, it follows that $f^*$ is monotone.
If $\hat{f}$ generates $f$ on $\mathcal{C}$, then trivially $f^*\re \mathcal{C}$ is simply $f\re\mathcal{C}$.
\end{proof}

In the next results,  members of ultrafilters are identified with their characteristic functions.
Then each function $f:\mathcal{U}\ra\mathcal{V}$, where $\mathcal{U}$ and $\mathcal{V}$ are ultrafilters  on $\om$,  is identified with  the function  $F:\{\chi_U:U\in\mathcal{U}\}\ra \{\chi_V:V\in\mathcal{V}\}$, 
where $F(\chi_U)=\chi_{f(U)}$.
Rather than using more notation, we  shall use $f$ to denote both of these functions. 
Since all ultrafilters  considered in this paper are nonprincipal, the notation $f(U)\sse f(U')$  will always refer to $f(U)$  and $f(U')$ as subsets of $\om$.

\begin{lem}[Extension]\label{thm.PropACtsMaps}
Suppose $\mathcal{U}$ and $\mathcal{V}$ are nonprincipal ultrafilters,
$f:\mathcal{U}\ra\mathcal{V}$
is a  monotone cofinal map, and
there is a cofinal subset $\mathcal{C}\sse\mathcal{U}$ such that
  $f\re\mathcal{C}$ is represented by a  monotone basic map $\hat{f}$.
   Let  $(k_m)_{m<\om}$ be the strictly increasing sequence such that the domain, $C$, of $\hat{f}$
   is contained in $\bigcup_{m<\om}2^{k_m}$.
Then there is a  continuous  monotone map $\tilde{f}:2^{\om}\ra 2^{\om}$ such that
\begin{enumerate}
\item
$\tilde{f}$ is represented by a monotone basic map $\hat{g}$ on $\bigcup_{m<\om}2^{k_m}$;
\item
 $\tilde{f}\re \mathcal{C}=f\re\mathcal{C}$;  and
\item
$\tilde{f}\re\mathcal{U}$ is a cofinal map from $\mathcal{U}$ to $\mathcal{V}$.
\end{enumerate}
\end{lem}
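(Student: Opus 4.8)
The plan is to build $\tilde f$ by extending the basic map $\hat f$ that generates $f\re\mathcal C$ to a basic map $\hat g$ whose domain is the full tree $2^{<\om}$ above the levels $(k_m)_{m<\om}$, in such a way that monotonicity is preserved and the values on nodes coming from $\mathcal C$ are unchanged, and then to define $\tilde f(X)=\bigcup_{m<\om}\hat g(X\re k_m)$. Write $C=\{X\re k_m:X\in\mathcal C,\ m<\om\}$ for the given domain of $\hat f$. The first step is a simple reduction: since $\hat f$ is level preserving along $(k_m)_{m<\om}$, end-extension preserving, and monotone, and since $\mathcal C$ is cofinal in $\mathcal U$, I may (thinning $\mathcal C$ if necessary) assume $C$ is closed under initial segments along the $k_m$'s, i.e.\ if $s\in C\cap 2^{k_{m'}}$ and $m<m'$ then $s\re k_m\in C$; this costs nothing and makes the extension bookkeeping cleaner.

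The heart of the construction is the extension step. I would define $\hat g$ on $2^{k_m}$ by induction on $m$. For $m=0$ set $\hat g(s)=\hat f(s)$ if $s\in C$, and $\hat g(s)=\emptyset$ (the empty sequence in $2^0$, which is the only element of $2^0$ anyway, so really there is nothing to do at the bottom) — the point is simply to record that at level $0$ all values are the unique element of $2^0$. At the inductive step, given $\hat g$ defined and basic on $\bigcup_{i\le m}2^{k_i}$ and agreeing with $\hat f$ on $C\cap\bigcup_{i\le m}2^{k_i}$, I extend to $2^{k_{m+1}}$ as follows. For $t\in 2^{k_{m+1}}$ with predecessor $s=t\re k_m$: if $t\in C$, put $\hat g(t)=\hat f(t)$ (this is consistent with $\hat g(s)=\hat f(s)$ because $\hat f$ is end-extension preserving and monotone); if $t\notin C$, I must choose $\hat g(t)\in 2^{m+1}$ with $\hat g(s)\sqsubseteq\hat g(t)$ (end-extension) and respecting monotonicity relative to all other level-$k_{m+1}$ nodes already assigned a value, in particular those in $C$. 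The natural choice is $\hat g(t)=\hat g(s)^\frown\langle 1\rangle$ — i.e.\ always append a $1$. This automatically makes $\hat g$ end-extension preserving; and it makes $\hat g$ monotone in the strong sense needed, because if $t\sqsubseteq t'$ as subsets (within the same level) the inductive hypothesis gives $\hat g(t\re k_m)\subseteq\hat g(t'\re k_m)$ as subsets and then appending $1$'s only adds elements. The one thing to check is that this remains consistent with the values forced by $C$: since on nodes of $C$ we copy $\hat f$, and $\hat f$ is itself monotone and end-extension preserving, monotonicity between a $C$-node and a non-$C$-node reduces to monotonicity one level down, which holds by induction — the appended $1$ can only help. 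So $\hat g$ is basic on $\bigcup_{m<\om}2^{k_m}$ and extends $\hat f$.

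Now set $\tilde f(X)=\bigcup_{m<\om}\hat g(X\re k_m)$ for $X\in 2^{\om}$. By Fact \ref{fact.basic1}, $\tilde f$ is represented by the basic map $\hat g$, is monotone, and is continuous; and since $\hat g$ extends $\hat f$ and $\hat f$ generates $f$ on $\mathcal C$, we get $\tilde f\re\mathcal C=f\re\mathcal C$, giving (1) and (2). For (3), monotonicity of $\tilde f$ together with $\tilde f\re\mathcal C=f\re\mathcal C$ and cofinality of $\mathcal C$ in $\mathcal U$ does the job: given any $X\in\mathcal U$, pick $Y\in\mathcal C$ with $Y\subseteq X$; then $f(Y)\subseteq\tilde f(X)$ (monotonicity), and $f(Y)\in\mathcal V$ since $f$ is cofinal, so $\tilde f(X)$ contains a set in $\mathcal V$ and hence $\tilde f(X)\in\mathcal V$. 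Moreover $\tilde f$ maps the filter base $\mathcal C$ onto the filter base $f[\mathcal C]$ of $\mathcal V$, so the image of any filter base for $\mathcal U$ (which can be refined to a subset of $\mathcal C$) is a filter base for $\mathcal V$; thus $\tilde f\re\mathcal U:\mathcal U\to\mathcal V$ is cofinal.

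The step I expect to be the main obstacle is verifying that the "append a $1$" extension genuinely respects monotonicity \emph{against} the prescribed $C$-values at the same level, rather than just among the freshly-defined nodes — one has to make sure that appending $1$'s to a non-$C$ branch never produces a set that fails to be contained in the $\hat g$-image of a $C$-branch extending it, and symmetrically. This is handled by the inductive hypothesis (the containment already holds one level down, and appending $1$'s is monotone in the subset order), but it is the point where the argument must be written carefully; everything else is bookkeeping.
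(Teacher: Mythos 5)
Your overall architecture matches the paper's (extend the finitary map $\hat f$ to a basic $\hat g$ on all of $\bigcup_m 2^{k_m}$, then set $\tilde f(X)=\bigcup_m\hat g(X\re k_m)$, and get (3) from monotonicity plus cofinality of $\mathcal C$), but the extension rule you chose does not work, and it fails exactly at the point you flagged as the main obstacle. The problem is monotonicity of $\hat g$ \emph{against} the prescribed $C$-values at the same level. Recall that ``monotone'' here refers to the subset order on the sets coded by the characteristic functions, not to extension along the tree. Take $t\in 2^{k_{m+1}}\setminus C$ and $t'\in C\cap 2^{k_{m+1}}$ with $t\sse t'$ as sets (such pairs exist in general: e.g.\ $t$ a singleton not of the form $X\re k_{m+1}$ for $X\in\mathcal C$, and $t'=X\re k_{m+1}$ for some $X\in\mathcal C$ containing that point). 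Your rule gives $\hat g(t)=\hat g(t\re k_m)^\frown\langle 1\rangle$, so $m\in\hat g(t)$, while $\hat g(t')=\hat f(t')$ need not contain $m$ (nothing forces $\hat f(t')(m)=1$). Then $\hat g(t)\not\sse\hat g(t')$, so $\hat g$ is not monotone, hence not basic, and $\tilde f$ is not a monotone map on $2^{\om}$ --- which is what conclusion (1) and the statement of the lemma require (and what Theorem \ref{thm.PropB} later uses, via monotonicity of $\tilde h\circ\tilde f$ on all of $2^{\om}$). Your appeal to the inductive hypothesis only gives $\hat g(t\re k_m)\sse\hat f(t'\re k_m)$; appending a $1$ to the left-hand side destroys the containment because the right-hand side is frozen at $\hat f(t')$. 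Appending $0$ instead fails symmetrically in the other direction ($t'\in C$, $t'\sse t\notin C$).

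The fix, which is what the paper does, is to take for each $t\in 2^{k_m}$ the \emph{least} value compatible with monotonicity from below:
$\hat g(t)(i)=\max\{\hat f(s)(i): s\in C,\ |s|\le k_m,\ s\sse t\}$, i.e.\ $\hat g(t)$ is the union of $\hat f(s)$ over all $C$-nodes $s$ that are subsets of $t$ at levels $\le k_m$. (This is well defined since the all-zeros node of each length $k_m$ lies in $C$, by cofinality of $\mathcal C$ and nonprincipality of $\mathcal U$.) Monotonicity and level preservation are immediate, $\hat g\re C=\hat f\re C$ follows from monotonicity of $\hat f$, end-extension preservation requires a short argument using end-extension preservation of $\hat f$, and the same definition makes the cofinality check for (3) transparent: $\tilde f(U)=\bigcup\{\hat f(s):s\in C,\ s\sse U\}\contains f(S)$ for any $S\in\mathcal C$ with $S\sse U$. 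Your verifications of (2) and (3) are otherwise in order once $\hat g$ is replaced by this definition.
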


\begin{proof}
Let $\hat{f}$ be a monotone basic  map
generating $f\re\mathcal{C}$,
and let
  $(k_m)_{m<\om}$ be the levels on which $\hat{f}$ is defined.
Thus, the domain of $\hat{f}$ is
$C=\{X\re k_m:X\in\mathcal{C},\ m<\om\}$,
and
 for each $s\in C\cap 2^{k_m}$,
 $\hat{f}(s)\in 2^m$.

\begin{claim}\label{claim.A2}
There is a monotone basic map $\hat{g}$, with domain $\bigcup_{m<\om}2^{k_m}$, which  generates a function $\tilde{f}:2^{\om}\ra 2^{\om}$ such that  $\tilde{f}\re\mathcal{C}=f\re\mathcal{C}$.
\end{claim}

\begin{proof}
Since $\mathcal{C}$ is cofinal in $\mathcal{U}$ and $\mathcal{U}$ is nonprincipal,
 the
 finite sequence of zeros of length $k_m$
is in $C$, for each $m<\om$.
Let $D=\bigcup_{m<\om}2^{k_m}$ and define $\hat{g}$ on $D$ as follows:
For $t\in 2^{k_m}$,
define
$\hat{g}(t)$  to be the function from $m$ into $2$ such that
for $i\in m$,
\begin{equation}
\hat{g}(t)(i) = \max\{\hat{f}(s)(i):
 s\in C,\ |s|\le k_m, \mathrm{\ and\ }[[s]]\sse [[t]]\}.
\end{equation}
That is, $\hat{g}(t)(i)=1$ if and only if there is some $s\in C$ such that  $|s|\le k_m$, $[[s]]\sse [[t]]$, and $\hat{f}(s)(i)=1$.
It follows from the  definition that $\hat{g}$ is monotone and  level preserving.
Since $\hat{f}$ is monotone, $\hat{g}\re C$ equals $\hat{f}$.

To see that $\hat{g}$ is end-extension preserving, suppose $t\subset t'$, where $t\in 2^{k_m}$ and $t'\in 2^{k_{m'}}$ for some $m<m'$.
Fix $i<m$.
Suppose that $\hat{g}(t')(i)=1$.
Then there is some
 $s'\in C\cap  2^{k_n}$ such that $i< n\le m'$, $[[s']]\sse [[t']]$, and $\hat{f}(s')(i)=1$.
Letting  $j=\min\{m,n\}$
and $s=s'\re k_j$, we see that
 $s\in C$ and $[[s]]\sse [[t'\re k_m]]$, where $t'\re k_m=t$; moreover, $\hat{f}(s)(i)=1$, since
$\hat{f}(s)=\hat{f}(s')\re j$.
It follows that $\hat{g}(t)(i)=1$.
On the other hand, if $\hat{g}(t')(i)=0$,
then by the definition of $\hat{g}$,
$\hat{f}(s)(i)=0$ for all $s\in C\cap\bigcup_{n\le m}2^{k_n}$ such that $[[s]]\sse [[t]]$.
Hence, $\hat{g}(t)(i)=0$.
Therefore, $\hat{g}(t')\re m=\hat{g}(t)$.

Now define $\tilde{f}:2^{\om}\ra 2^{\om}$ by
\begin{equation}
\tilde{f}(Z)=\bigcup_{m<\om}\hat{g}(Z\re k_m).
\end{equation}
Then  $\tilde{f}$ is generated by the basic  map $\hat{g}$.
It follows that $\tilde{f}$ is monotone.
Since  $\hat{g}\re C$  equals $\hat{f}$,
it follows that $\tilde{f}\re\mathcal{C}=f\re\mathcal{C}$.
\end{proof}

Thus, $\tilde{f}$ is continuous on $2^{\om}$
 and (1) and (2) of the Lemma hold.
To show (3), it suffices to show that $\tilde{f}\re\mathcal{U}$ has range inside of $\mathcal{V}$, since $\tilde{f}\re\mathcal{C}$ equals $f\re\mathcal{C}$ which is monotone and cofinal in $\mathcal{V}$.
Let $U\in\mathcal{U}$ be given.
Fix $S\in\mathcal{C}$ such that $U\contains S$.
Then $\tilde{f}(U)\contains \tilde{f}(S)=f(S)\in\mathcal{V}$, which concludes the proof.
\end{proof}

In the main theorem of this section,
we show that the property of having basic Tukey reductions  is inherited under Tukey reducibility below any ultrafilter having monotone basic Tukey reductions and satisfying the property $(*)$ below.

\begin{thm}\label{thm.PropB}
Suppose that the ultrafilter
$\mathcal{U}$ has monotone
basic Tukey reductions.
Suppose  further that for   each monotone cofinal map $f$ from $\mathcal{U}$ to another ultrafilter, there is some
 cofinal subset $\mathcal{C}\sse\mathcal{U}$
 such that $f\re\mathcal{C}$ is represented by a  monotone basic function  on some  levels $(k_m)_{m<\om}$ satisfying the following property:
\begin{enumerate}
\item[$(*)$]
For each $X\in\overline{\mathcal{C}}$ and each $m<\om$, there is a $Z\in\mathcal{C}$ such that
$Z\contains X$ and $Z\re k_m=X\re k_m$.
\end{enumerate}
Then every ultrafilter $\mathcal{V}$ Tukey reducible to $\mathcal{U}$ also has
 basic Tukey reductions.
\end{thm}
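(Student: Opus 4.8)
The plan is to reduce, by composition, to a situation governed by the hypothesis on $\mathcal{U}$. Fix a monotone cofinal $g_0\colon\mathcal{U}\ra\mathcal{V}$ witnessing $\mathcal{V}\le_T\mathcal{U}$ and let $h\colon\mathcal{V}\ra\mathcal{W}$ be an arbitrary monotone cofinal map; we must find a cofinal $\mathcal{D}\sse\mathcal{V}$ on which $h$ is represented by a basic map. First I would apply the hypothesis to $g_0$ and then the Extension Lemma \ref{thm.PropACtsMaps} to replace $g_0$ by a \emph{total} map $g\colon 2^\om\ra 2^\om$ that is represented by a basic map $\hat g$, agrees with $g_0$ on a cofinal set, and still restricts to a cofinal map onto $\mathcal{V}$; so I may assume $g$ is basic on all of $2^\om$. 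The composition $h\circ g$ is then a monotone cofinal map $\mathcal{U}\ra\mathcal{W}$, so the hypothesis gives a cofinal $\mathcal{C}\sse\mathcal{U}$ on which $h\circ g$ is represented by a basic map $\hat p$ and for which $(*)$ holds. Passing to a common upward refinement of the two level sequences (this only enlarges the levels, hence shrinks the trees, and routinely preserves basicness and $(*)$), I may take $\hat g$ and $\hat p$ defined on one sequence $(k_m)_{m<\om}$. Having used the hypothesis once to pass to the total basic $g$, invoking it again only for $h\circ g$ — and not separately for $g$ — is what keeps a single cofinal set $\mathcal{C}$, rather than two unrelated ones, in play.

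The aim is now to use $(*)$ to thin $\mathcal{C}$ to a cofinal subset $\mathcal{C}^*\sse\mathcal{C}$ together with a subsequence $(k_{m_j})_{j<\om}$ of levels along which $\hat g$ is injective on the tree of initial segments of $\mathcal{C}^*$ — or at least along which the value of $\hat g$ determines the value of $\hat p$, which is all that is actually needed. Granting this, set $\mathcal{D}:=g[\mathcal{C}^*]$; since $g$ is cofinal and $\mathcal{C}^*$ is cofinal in $\mathcal{U}$, $\mathcal{D}$ is a filter base for $\mathcal{V}$, hence cofinal in $\mathcal{V}$. For $V\in\mathcal{D}$ and each $j$ there is then a (unique, by the injectivity) node $s_{V,j}$ in the tree of $\mathcal{C}^*$ at level $k_{m_j}$ with $\hat g(s_{V,j})=V\re m_j$, and one sets $\hat h(V\re m_j):=\hat p(s_{V,j})$ — in effect $\hat h=\hat p\circ\hat g^{-1}$ along these levels. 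I would then check that $\hat h$ is level and end-extension preserving and monotone, inheriting these from $\hat p$, $\hat g$ and the injectivity; and that for $V=g(X)\in\mathcal{D}$ with $X\in\mathcal{C}^*$ one has $s_{V,j}=X\re k_{m_j}$, so that $\bcup_j\hat h(V\re m_j)=\bcup_j\hat p(X\re k_{m_j})=(h\circ g)(X)=h(V)$. Thus $h\re\mathcal{D}$ is represented by a basic map; as $h$ was arbitrary, $\mathcal{V}$ has basic Tukey reductions.

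The step I expect to be the main obstacle is the construction of $\mathcal{C}^*$ and the level subsequence, which is exactly where property $(*)$ does its work. The plan is a recursion on levels: having fixed a level and the surviving tree nodes there (with pairwise distinct $\hat g$-images), choose the next level $k_{m}$ large and, above each surviving node, retain just one representative of each $\hat g$-value occurring at level $k_m$; the difficulty is to carry this out so that enough branches survive for $\mathcal{C}^*$ to remain cofinal in $\mathcal{U}$. Here $(*)$ supplies the needed room: given a surviving node and a point of $\overline{\mathcal{C}}$ that one wants to keep below a prescribed $U\in\mathcal{U}$, $(*)$ returns a genuine member of $\mathcal{C}$ realising the right pattern on a prescribed initial segment and lying above that point, which is what lets one certify cofinality of $\mathcal{C}^*$ against each $U\in\mathcal{U}$ while still discarding the redundant nodes. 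Making the bookkeeping close — achieving injectivity and cofinality simultaneously — is the delicate part. The remaining verifications (that $\hat h$ is basic, and that upward refinement of level sequences preserves basicness and $(*)$) are routine and patterned on the proof of the Extension Lemma \ref{thm.PropACtsMaps}.
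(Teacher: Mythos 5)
Your first paragraph matches the paper's opening moves exactly: extend the witnessing map to a total basic $\tilde f$ on $2^\om$ via Lemma \ref{thm.PropACtsMaps}, extend $h$ to $\tilde h$ by intersecting over supersets, apply the hypothesis to the composition $h\circ f$, and align the two level sequences. The divergence, and the gap, is in your central step: thinning $\mathcal{C}$ to a cofinal $\mathcal{C}^*$ on which $\hat g$ (the paper's $\hat f$) is injective level-by-level. You correctly flag this as the main obstacle, but the mechanism you offer does not address the actual difficulty. Keeping one representative node per $\hat g$-value at each level discards every branch through the non-chosen nodes, and cofinality of $\mathcal{C}^*$ must then be certified against \emph{every} $U\in\mathcal{U}$ from below --- i.e., for each $U$ some surviving branch must be contained in $U$ --- whereas the representatives are committed to before $U$ is seen. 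Property $(*)$ runs in the wrong direction for this: it produces a \emph{large} $Z\in\mathcal{C}$ with $Z\contains X$ and a prescribed initial segment, which helps you extend a node upward to a set in $\mathcal{U}$, but gives no control over finding small surviving sets below a prescribed $U$. When $f$ is badly non-injective (e.g.\ induced by a finite-to-one projection), the surviving family is essentially a section of $f$ chosen in advance, and there is no reason such a section is cofinal in $\mathcal{U}$. So as written the construction of $\mathcal{C}^*$ is not just delicate but unsupported, and your fallback (``the value of $\hat g$ determines the value of $\hat p$'') is asserted rather than argued.

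The paper avoids injectivity altogether. It defines $\hat h(t)$, for $t=Y\re j_m$, as the \emph{intersection} of $\hat g(s)\re m$ over all preimages $s\in C\cap 2^{k_{j_m}}$ with $\hat f(s)=t$, and then proves this intersection equals $h(Y)\re m$. The upper bound ($\hat h(t)\sse h(Y)\re m$) comes from any actual $Z\in\mathcal{C}$ with $f(Z)=Y$. The lower bound --- that \emph{every} preimage $s$ satisfies $\hat g(s)\re m\contains h(Y)\re m$ --- is where the real work lies: two compactness arguments on $\overline{\mathcal{C}}$ (the paper's Claims 1 and 2) produce a sparser level sequence $(j_m)$ such that any $Z$ agreeing with $Y$ up to $j_m$ under $\tilde f$ can be replaced by some $X\in\overline{\mathcal{C}}$ with $\tilde f(X)=Y$ and the same $\hat g$-value at level $k_m$; and $(*)$ is used (Claim 3) to show $g^*(X)\contains\tilde g(X)$ for $X\in\overline{\mathcal{C}}$, which converts that into the needed containment. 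If you want to salvage your outline, replace the injectivity step by this intersection definition and supply the two compactness claims; the role you assigned to $(*)$ should be reassigned to proving $g^*\contains\tilde g$ on the closure of $\mathcal{C}$.
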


\begin{proof}
Suppose that  $\mathcal{U}$ satisfies the hypotheses and let $\mathcal{V}\le_T\mathcal{U}$.
We  may assume that $\mathcal{U}$ is nonprincipal, for the result holds immediately  if $\mathcal{U}$ is principal.
By  Lemma \ref{thm.PropACtsMaps}, there is a
 map $\tilde{f}:2^{\om}\ra 2^{\om}$  generated by a monotone basic  map $\hat{f}:\bigcup_{m<\om}2^{k_m}\ra 2^{<\om}$, for some  increasing sequence $(k_m)_{m<\om}$, such that
$\tilde{f}\re \mathcal{U}:\mathcal{U}\ra\mathcal{V}$ is a cofinal map.
We shall let
  $f$ denote the restricted map  $\tilde{f}\re\mathcal{U}$.
Suppose $\mathcal{W}\le_T\mathcal{V}$, and let
$h:\mathcal{V}\ra\mathcal{W}$ be a monotone cofinal map.
Extend $h$ to the map $\tilde{h}: 2^{\om} \ra 2^{\om}$  defined as follows:
For each $X\in 2^{\om}$, let
\begin{equation}
\tilde{h}(X)=\bigcap\{h(V):V\in\mathcal{V}\mathrm{\ and\ }V\contains X\}.
\end{equation}
Notice that $\tilde{h}$ is monotone.
Furthermore, it follows from $h$ being monotone that $\tilde{h}\re \mathcal{V}=h$.

Define $\tilde{g}=\tilde{h}\circ \tilde{f}$.
Then $\tilde{g}:2^{\om}\ra 2^{\om}$ and is monotone.
Letting $g$ denote $\tilde{g}\re \mathcal{U}$,
we see that
$g=h\circ f$; hence
 $g:\mathcal{U}\ra\mathcal{W}$ is a monotone cofinal map.
By the hypotheses,
there is a cofinal subset $\mathcal{C}\sse\mathcal{U}$ and
a monotone basic map  $\hat{g}:C\ra 2^{<\om}$  generating  $g\re\mathcal{C}$ such that  $(*)$ holds,
where $(k_m)_{m<\om}$ is the strictly increasing sequence associated with $\hat{g}$ and
$C=\{X\re k_m:X\in\mathcal{C}\mathrm{\ and\ } m<\om\}$.

Without loss of generality, we may assume that $\hat{f}$ and $\hat{g}$ are defined on the same  levels $(k_m)_{m<\om}$ and that $k_0=0$:
For if $\hat{g}$ is defined on $\{X\re j_m:X\in\mathcal{C}$ and $m<\om\}$,
we can take $l_m=\max(k_m,j_m)$ and
 define $\hat{f}'(s)=\hat{f}(s\re k_m)$ for $s\in 2^{l_m}$
and $\hat{g}'(X\re l_m)=\hat{g}(X\re j_m)$  for $X\in\mathcal{C}$ and $m<\om$.
Notice that whenever
$s\in C\cap 2^{k_m}$ and  $s \sqsubset X\in\mathcal{C}$, then
 $\hat{f}(s)=f(X)\re m$
and $\hat{g}(s)=g(X)\re m$.

Define \begin{equation}
D=\{\hat{f}(s):s\in C\}
\textrm{\ and  \ } \mathcal{D}=f[\mathcal{C}].
\end{equation}
Notice that in fact $D=\{Y\re m:Y\in\mathcal{D},\ m<\om\}$, and
  $\mathcal{D}$ is cofinal in $\mathcal{V}$ since $f:\mathcal{U}\ra\mathcal{V}$ is monotone cofinal and $\mathcal{C}$ is a cofinal subset of $\mathcal{U}$.
Let $\overline{\mathcal{C}}$ denote the closure of $\mathcal{C}$ in the topological space $2^{\om}$.
Since $\tilde{f}$ is continuous on the compact space $2^{\om}$ and $f\re\mathcal{C}=\tilde{f}\re \mathcal{C}$, it follows that
$\overline{\mathcal{D}}=\overline{f[\mathcal{C}]}=
\tilde{f}[\overline{\mathcal{C}}]$.

\begin{claim1}\label{claim.Bii}
For each $Y\in\overline{\mathcal{D}}$ and  each $m<\om$, there is an $\tilde{m}\ge m$ satisfying the following:
For each $Z\in\overline{\mathcal{C}}$ such that $\tilde{f}(Z)\re \tilde{m}=Y\re \tilde{m}$,
 there is an $X\in\overline{\mathcal{C}}$ such that
$\tilde{f}(X)=Y$ and
 $X \re k_m=Z \re k_m$.
\end{claim1}

\begin{proof}
Let $Y\in\overline{\mathcal{D}}$ and suppose the claim fails.
Then
there is an $m$ such that for each $n\ge m$,
there is a $Z_n\in\overline{\mathcal{C}}$ such that $\tilde{f}(Z_n)\re n=Y\re n$,
but for each $X\in\overline{\mathcal{C}}$ such that $\tilde{f}(X)=Y$,
$Z_n \re k_m\ne X\re k_m$.
$\overline{\mathcal{C}}$ is compact,
so there is a subsequence $(Z_{n_i})_{i<\om}$ which converges to some $X\in\overline{\mathcal{C}}$.
Since $\tilde{f}$ is continuous, $\tilde{f}(Z_{n_i})$ converges to $\tilde{f}(X)$.
Since   for each $i<\om$, $\tilde{f}(Z_{n_i})\re n_i=Y\re n_i$,
it follows that $\tilde{f}(Z_{n_i})$ converges to $Y$.
Therefore, $\tilde{f}(X)=Y$.
Further, since $Z_{n_i}\ra X$, there is a $j$ such that for all $i\ge j$, $Z_{n_i}\re k_m=X\re k_m$.
But this is a contradiction since $X\in\overline{\mathcal{C}}$ and
$\tilde{f}(X)=Y$.
\end{proof}

\begin{claim2}
There is a strictly increasing sequence
$(j_m)_{m<\om}$  such that
for each  $m<\om$,  for all $Y\in\overline{\mathcal{D}}$
and
 $Z\in\overline{\mathcal{C}}$ with $\tilde{f}(Z)\re j_{m}=Y\re j_{m}$,
there is an $X\in\overline{\mathcal{C}}$ such that
$\tilde{f}(X)=Y$ and
 $X\re k_m=Z\re k_m$.
\end{claim2}

\begin{proof}
Let $j_0=0$ and note that
 $j_0$ vacuously satisfies the claim.
Now suppose that $m\ge 1$ and suppose we have chosen $j_0<\dots<j_{m-1}$ satisfying the claim.
For each   $Y\in\overline{\mathcal{D}}$, there is an $\tilde{m}(Y)\ge m$ satisfying Claim 1.
The finite characteristic functions $Y\re \tilde{m}(Y)$ determine basic open sets in $2^{\om}$, and the union of these open sets (over all $Y\in\overline{\mathcal{D}}$)
 covers $\overline{\mathcal{D}}$.
Since $\overline{\mathcal{D}}$ is compact,
 there is a finite subcover, determined by
some
$Y_0\re \tilde{m}(Y_0),\dots, Y_l\re \tilde{m}(Y_l)$.
Take $j_{m}>\max\{j_{m-1},\tilde{m}(Y_0),\dots,
\tilde{m}(Y_l)\}$.
By this inductive construction, we obtain a sequence  $(j_m)_{m<\om}$ which satisfies the claim.
\end{proof}

Recall  that the notation $\tilde{g}(X)\re m$ (and analogues) 
has the dual meaning of both a characteristic function with domain $m$ and also 
 the set $[[\tilde{g}(X)]]\cap m$.
 For clarity, 
 in the next few claims,  we shall use the precise notation of $[[\tilde{g}(X)]]\cap m$ to denote the set, and $\tilde{g}(X)\re m$ to denote the characteristic function.

\begin{claim3}
For each $X\in \overline{\mathcal{C}}$ and $m<\om$,
\begin{equation}\label{eq.starintersection}
[[\tilde{g}(X)]]\cap m\sse [[\hat{g}(X\re k_m)]].
\end{equation}
\end{claim3}

\begin{proof}
Let  $X\in \overline{\mathcal{C}}$ and $m<\om$ be given,
and let  $s$ denote $X\re k_m$.
Then $s\in C$, and
for any $Z\in\mathcal{C}$ such that $Z\re k_m=s$,
we have that
$\tilde{g}(Z)\re m=\hat{g}(s)$.
Since  the property $(*)$ on $\mathcal{C}$ implies
there is  a $Z\in\mathcal{C}$ such that $Z\contains X$ and $Z\re k_m=s$,
and
since $\tilde{g}$ is monotone, it follows that
$$
[[\tilde{g}(X)]]\cap m\sse
\bigcap\{[[\tilde{g}(Z)]]\cap  m:Z\in\mathcal{C}\mathrm{\  and\ }Z\contains X\}
=
[[\hat{g}(s)]].
$$
\end{proof}


\begin{claim4}\label{claim.FactConverge}
Let $Y\in\overline{\mathcal{D}}$ and $m$ be given,
 and let $t=Y\re j_m$.
Then
$$
[[\tilde{h}(Y)]]\cap m\sse [[\hat{g}(s)\re m]],
$$
for each $s\in C\cap 2^{k_{j_m}}$ such that $\hat{f}(s)=t$.
\end{claim4}

\begin{proof}
Let $Y\in\overline{\mathcal{D}}$ and $m$ be given,
and let $t=Y\re j_m$.
Let $s$ be any member of $C\cap 2^{k_{j_m}}$ such that $\hat{f}(s)=t$.
By Claim 2, there is an $X\in\overline{\mathcal{C}}$ such that
$\tilde{f}(X)=Y$ and
$X \re k_m= s\re k_m$.
To prove the claim, we shall show that the following holds:
\begin{equation}\label{eq.ghi}
[[\tilde{h}(Y)]]\cap  m
=
[[\tilde{g}(X)]]\cap m
\sse
[[\hat{g}(X\re k_m)]]
=
[[\hat{g}(s)\re m]].
\end{equation}
The first equality  holds since
$\tilde{f}(X)=Y$ and
$\tilde{h}\circ\tilde{f}(X)=\tilde{g}(X)$.
The last  equality holds since
$\hat{g}(X \re k_m)=\hat{g}(s\re k_m)=\hat{g}(s)\re m$.
The inclusion holds by Claim 3.
\end{proof}

Finally, we define the finitary function $\hat{h}$ which will represent $h$ on $\mathcal{D}$.
Let $D'=\{t\in D: \exists m<\om\, (|t|=j_m)\}$.
For $t\in D'\cap 2^{j_m}$,  define $\hat{h}(t)$ to be the function from $m$  into $2$  such that for $i\in m$,
\begin{equation}
\hat{h}(t)(i)=\min\{\hat{g}(s)(i): s\in C\cap 2^{k_{j_m}}\mathrm{\ and\ }\hat{f}(s)=t\}.
\end{equation}
In words, $\hat{h}(t)$ is the characteristic function  with domain $m$ of the intersection of the  subsets  $a$ of $m$  for which there is some
$s\in C\cap 2^{k_{j_m}}$ with $\hat{f}(s)=t$
such that
$\hat{g}(s)\re m$   is the characteristic function of $a$ with domain $m$.
By definition, $\hat{h}$ is level preserving.

\begin{claim5}
 $\hat{h}$ is basic and generates $h\re \mathcal{D}$.
\end{claim5}

\begin{proof}
Let $Y\in\mathcal{D}$, $Z$ be a member of $\mathcal{C}$ such that $\tilde{f}(Z)=Y$,
and $m<\om$ be given.
Since $Z\in\mathcal{C}$, $\tilde{f}(Z)=f(Z)$.
Let $t=Y\re j_m$ and $u=Z\re k_{j_m}$.
Then $\hat{f}(u)=t$,
so $[[\hat{g}(u)\re m]]\contains [[\hat{h}(t)]]$.
Since $Z\in\mathcal{C}$, $[[g(Z)]]\cap m=[[\hat{g}(u)\re m]]$.
Thus,
\begin{equation}
[[h(Y)]]\cap m
= [[h\circ f(Z)]]\cap m
=[[g(Z)]]\cap  m
=[[\hat{g}(u)\re m ]]\contains [[\hat{h}(t)]].
\end{equation}

Now suppose $s\in C\cap 2^{k_{j_m}}$  and $\hat{f}(s)=t$.
By Claim 4, since $\tilde{h}\re\mathcal{V}=h$,
we see that $[[h(Y)]]\cap m=[[\tilde{h}(Y)]]\cap  m\sse [[\hat{g}(s)\re m]]$.
Since $s$ was arbitrary, it follows that $[[h(Y)]]\cap m\sse [[\hat{h}(t)]]$.
Therefore, $\hat{h}(Y\re j_m)=h(Y)\re m$.

Thus, $\hat{h}$ generates $h$ on $\mathcal{D}$.
It  follows that $\hat{h}$ is end-extension preserving:
If $t\subset t'$ are members of $D'$ of lengths $j_m$ and $j_{m'}$, respectively,
then letting $Y$ be any member of $\mathcal{D}$ such that $t'$ is an initial segment of $Y$,
we see that
$\hat{h}(t)=h(Y)\re m= (h(Y)\re m')\re m
=\hat{h}(t')\re m$.
Therefore, $\hat{h}$ is basic.
\end{proof}

Thus, $h\re\mathcal{D}$ is  generated by the basic map $\hat{h}$ on $D'$.
Thus, $\mathcal{V}$ has basic Tukey reductions.
\end{proof}

Every p-point has monotone basic Tukey reductions satisfying the additional property $(*)$ of Theorem \ref{thm.PropB},
as was shown in the proof of Theorem 20 of \cite{Dobrinen/Todorcevic11}, the cofinal set $\mathcal{C}$ there being  of the simple form $\mathcal{P}(X)\cap\mathcal{U}$ for some  $X\in\mathcal{U}$.
Hence, the following theorem holds.

\begin{thm}\label{thm.main}
Every ultrafilter Tukey reducible to a  p-point
 has basic, and hence continuous, Tukey reductions.
\end{thm}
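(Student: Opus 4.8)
The plan is to derive Theorem \ref{thm.main} as an immediate consequence of Theorem \ref{thm.PropB}: it suffices to check that every p-point $\mathcal{U}$ satisfies the hypotheses of that theorem, namely that $\mathcal{U}$ has basic Tukey reductions and that, for each monotone cofinal map $f$ on $\mathcal{U}$, the cofinal subset $\mathcal{C}\sse\mathcal{U}$ on which $f$ is represented by a basic map can be chosen so that property $(*)$ holds. (We may assume $\mathcal{U}$ is nonprincipal, the principal case being trivial.)

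For the first clause I would appeal to Theorem \ref{thm.ppoint} together with Remark \ref{rem.cts}: the argument for Theorem 20 of \cite{Dobrinen/Todorcevic11} shows that a monotone cofinal $f:\mathcal{U}\ra\mathcal{V}$ is not merely continuous on a cofinal subset of $\mathcal{U}$ but is in fact represented there by a basic map. The one feature of that argument I need to extract is the shape of the cofinal subset it produces: from a countable decreasing family of sets read off from $f$, an application of the p-point property yields a single $X_0\in\mathcal{U}$ for which $\mathcal{C}=\mathcal{P}(X_0)\cap\mathcal{U}$ works. This is the only non-mechanical ingredient in the whole proof, and I expect the main (very mild) obstacle to be pinning down this form from the cited construction.

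Granting that $\mathcal{C}=\mathcal{P}(X_0)\cap\mathcal{U}$, property $(*)$ follows by a one-line computation. Since any pointwise limit of subsets of $X_0$ is again a subset of $X_0$, we have $\overline{\mathcal{C}}\sse\mathcal{P}(X_0)$. Fix $X\in\overline{\mathcal{C}}$ and $m<\om$, and let $(k_m)_{m<\om}$ be the levels of the basic representation of $f$. Set $Z=(X\cap k_m)\cup(X_0\setminus k_m)$. Then $Z\sse X_0$; since $X\sse X_0$ we get $X\setminus k_m\sse X_0\setminus k_m\sse Z$ and $X\cap k_m\sse Z$, so $Z\contains X$; clearly $Z\cap k_m=X\cap k_m$; and $Z\contains X_0\setminus k_m$, which is a cofinite subset of $X_0$ and hence lies in the ultrafilter $\mathcal{U}$, so $Z\in\mathcal{U}$. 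Thus $Z\in\mathcal{C}$ witnesses $(*)$ for $X$ and $m$.

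With the hypotheses of Theorem \ref{thm.PropB} verified for every p-point $\mathcal{U}$, that theorem yields that every $\mathcal{V}\le_T\mathcal{U}$ has basic Tukey reductions. Finally, by Fact \ref{fact.basic1} a map generated by a basic map on a cofinal subset is continuous on that subset, so having basic Tukey reductions entails having continuous Tukey reductions in the sense of Definition \ref{defn.ctsandfinitary}. This completes the plan; all the substantive work has already been carried out in Theorem \ref{thm.PropB}, and what remains here is only the verification of its hypotheses for p-points.
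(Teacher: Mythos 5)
Your proposal is correct and follows the paper's own route exactly: the paper also derives this theorem directly from Theorem \ref{thm.PropB} by noting that the proof of Theorem 20 of \cite{Dobrinen/Todorcevic11} produces a basic representation on a cofinal set of the form $\mathcal{P}(X)\cap\mathcal{U}$, for which property $(*)$ holds. Your explicit verification of $(*)$ via $Z=(X\cap k_m)\cup(X_0\setminus k_m)$ is the (correct) computation the paper leaves implicit.
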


Recall that an ultrafilter $\mathcal{V}$ is Rudin-Blass reducible to an ultrafilter $\mathcal{W}$ if there is a finite-to-one map $h:\om\ra\om$ such that $\mathcal{V}=h(\mathcal{W})$.
Thus, Rudin-Blass reducibility implies Rudin-Keisler reducibility.
Our Theorem \ref{thm.main} combines with
Theorem 10 of Raghavan in \cite{Raghavan/Todorcevic12} (see Theorem \ref{thm.RT} for the statement) to yield the following.

\begin{thm}\label{thm.dr}
Suppose $\mathcal{U}$ is Tukey reducible to a p-point.
Then  for each q-point $\mathcal{V}$,
$\mathcal{V}\le_T\mathcal{U}$ implies $\mathcal{V}\le_{RB}\mathcal{U}$.
\end{thm}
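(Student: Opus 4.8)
The plan is to assemble three results already in hand: Theorem \ref{thm.main}, the Extension Lemma \ref{thm.PropACtsMaps}, and Theorem \ref{thm.RT} (Raghavan's Theorem 10 in \cite{Raghavan/Todorcevic12}). Fix a q-point $\mathcal{V}$ with $\mathcal{V}\le_T\mathcal{U}$. If $\mathcal{U}$ is principal the conclusion is trivial, so I assume $\mathcal{U}$, and hence $\mathcal{V}$, is nonprincipal. By Fact 6 of \cite{Dobrinen/Todorcevic11}, the reduction $\mathcal{V}\le_T\mathcal{U}$ can be taken to be witnessed by a monotone cofinal map $f:\mathcal{U}\ra\mathcal{V}$.

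Since $\mathcal{U}$ is Tukey reducible to a p-point, Theorem \ref{thm.main} tells us $\mathcal{U}$ has basic Tukey reductions; hence there is a cofinal subset $\mathcal{C}\sse\mathcal{U}$ such that $f\re\mathcal{C}$ is represented by a basic map. The next step is where care is needed: Theorem \ref{thm.RT} demands a continuous monotone cofinal map defined on \emph{all} of $\mathcal{U}$, whereas at this point I only have continuity of $f$ on the cofinal set $\mathcal{C}$. This is precisely the gap closed by the Extension Lemma \ref{thm.PropACtsMaps}: applied to $f$ and $\mathcal{C}$, it produces a monotone map $\tilde{f}:2^{\om}\ra 2^{\om}$, represented by a basic map, with $\tilde{f}\re\mathcal{C}=f\re\mathcal{C}$ and $\tilde{f}\re\mathcal{U}$ a cofinal map from $\mathcal{U}$ to $\mathcal{V}$. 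By Fact \ref{fact.basic1}, a map represented by a basic map is continuous on the whole space $2^{\om}$ and monotone; in particular $\tilde{f}\re\mathcal{U}$ is a continuous, monotone cofinal map from $\mathcal{U}$ to $\mathcal{V}$.

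Finally, I apply Theorem \ref{thm.RT} with this witness: $\mathcal{V}$ is a q-point, $\mathcal{V}\le_T\mathcal{U}$, and this reduction is witnessed by the continuous monotone cofinal map $\tilde{f}\re\mathcal{U}$; therefore $\mathcal{V}\le_{RB}\mathcal{U}$, as desired.

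There is no genuinely hard step in this argument; it is a bookkeeping proof combining earlier theorems. The one point that must not be glossed over — and the reason the Extension Lemma is invoked rather than Theorem \ref{thm.main} alone — is that ``having continuous (or basic) Tukey reductions'' only supplies continuity on \emph{some} cofinal $\mathcal{C}\subsetneq\mathcal{U}$, so one first has to promote this to an honestly continuous map on all of $\mathcal{U}$ without destroying cofinality of the image in $\mathcal{V}$ before Raghavan's theorem can be applied.
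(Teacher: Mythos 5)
Your proof is correct and follows essentially the same route as the paper, which simply combines Theorem \ref{thm.main} with Theorem \ref{thm.RT}. Your extra step of invoking the Extension Lemma \ref{thm.PropACtsMaps} to upgrade continuity on a cofinal subset to a continuous monotone cofinal map on all of $\mathcal{U}$ is a legitimate and careful filling-in of the detail the paper leaves implicit, using exactly the tool the paper provides for that purpose.
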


\begin{rem}
Stable ordered-union ultrafilters are the analogues of p-points on the base set $\FIN=[\om]^{<\om}\setminus\{\emptyset\}$ (see \cite{Blass87}).
In
 Theorems 71 and 72 of \cite{Dobrinen/Todorcevic11},
 it was shown that for each stable ordered union ultrafilter $\mathcal{U}$, both $\mathcal{U}$ and its projection $\mathcal{U}_{\min,\max}$ have  continuous Tukey reductions, with respect to the metric topology on the Milliken space of infinite block sequences.
It is of interest that  the ultrafilter $\mathcal{U}_{\min,\max}$ is rapid, but is neither a p-point nor a q-point, and condition $(*)$ of Theorem \ref{thm.PropB} is satisfied. 
Further,  all ultrafilters selective for some topological Ramsey space have monotone basic Tukey reductions, under a mild assumption which is satisfied in all known topological Ramsey spaces, 
 Dobrinen and Trujillo showed in Theorem  56 of
 in  \cite{Dobrinen/Mijares/Trujillo14}.
Many such ultrafilters are not p-points.

It should be the case that by arguments similar to those in  Theorem \ref{thm.PropB},
one can prove that every ultrafilter Tukey reducible to some stable ordered union ultrafilter, or more generally, any ultrafilter selective for some topological Ramsey space, also has continuous Tukey reductions.
We leave this as an open problem in Section \ref{sec.op}.
\end{rem}


\section{Iterated Fubini products of ultrafilters represented as ultrafilters generated by $\vec{\mathcal{U}}$-trees on  fronts}\label{sec.Fubit}

Fubini products of ultrafilters on base set $\om$ are commonly viewed as ultrafilters on base set $\om\times\om$.
As was pointed out to us by Todorcevic, Fubini products of nonprincipal ultrafilters on base set $\om$ may also be viewed as ultrafilters on base set $[\om]^2$.
This view leads well to precise investigations of ultrafilters constructed
 by iterating the Fubini product construction.
In this section, we review Fubini products of ultrafilters and countable iterations of this construction.
After reviewing the notion of front,  we then
show how every ultrafilter obtained by iterating the Fubini product construction can be viewed as an ultrafilter generated by certain subtrees of a base set which is a tree, and in particular,  a  front.
This section is a primer for the work in Section \ref{sec.BasicCofinalFubit}.

\begin{defin}\label{defin.cross}
Let $\mathcal{U}$ and $\mathcal{V}_n$ ($n<\om$) be ultrafilters on $\om$.
The \em Fubini product \rm of $\mathcal{V}_n$ over $\mathcal{U}$, denoted
$\lim_{n\ra\mathcal{U}}\mathcal{V}_n$, is defined as follows:
\begin{equation}
\lim_{n\ra\mathcal{U}}\mathcal{V}_n= \{A\sse\om\times\om:\{n\in\om:\{j\in\om:(n,j)\in A\}\in\mathcal{V}_n\}\in\mathcal{U}\}.
\end{equation}
When all $\mathcal{V}_n=\mathcal{V}$,
then we let $\mathcal{U}\cdot\mathcal{V}$ denote
$\lim_{n\ra\mathcal{U}}\mathcal{V}_n$.
\end{defin}

Let $\mathcal{U}$ and $\mathcal{V}$ be ultrafilters on countable base sets $I$ and $J$, respectively.
We say that $\mathcal{U}$ is {\em isomorphic} to $\mathcal{V}$ if there exists a bijection $\pi:I\ra J$
such that $\mathcal{V}=\{\pi[U]:U\in\mathcal{U}\}$.
Up to isomorphism, Definition \ref{defin.cross} also defines Fubini products of ultrafilters on arbitrary countable infinite sets.

The Fubini product construction  can be iterated countably many times, each time producing an ultrafilter.
For example,
given an ultrafilter $\mathcal{V}$, let $\mathcal{V}^1$ denote $\mathcal{V}$, and
let $\mathcal{V}^{n+1}$ denote $\mathcal{V}\cdot\mathcal{V}^n$.
Naturally, $\mathcal{V}^{\om}$ denotes $\lim_{n\ra\mathcal{V}}\mathcal{V}^n$.
Continuing in this manner, we obtain $\mathcal{V}^{\al}$, for all $2\le \al<\om\cdot 2$.
At this point, it is ambiguous what is meant by $\mathcal{V}^{\om\cdot 2}$.
It is standard practice for  countable a limit ordinal $\al$ to let $\mathcal{V}^{\al}$ denote any ultrafilter  constructed by choosing (arbitrarily) an increasing  sequence  $(\al_n)_{<\om}$ converging to $\al$ and defining $\mathcal{V}^{\al}$ to be $\lim_{n\ra\mathcal{V}}\mathcal{V}^{\al_n}$, but  this is ambiguous, since the choice of the sequence $(\al_{n})_{n<\om}$ is completely arbitrary.

However,
each countable iteration of Fubini products of ultrafilters (including the choice of sequence at limit stages) can be represented as an
ultrafilter generated by $\vec{\mathcal{U}}$-trees (see Definition \ref{def.Utree})
on
 a base set which is
a front.
This representation is unambiguous at limit stages.
For this reason,
Theorem  \ref{thm.allFubProd_p-point_cts}
in the next section,
showing that iterations of Fubini products of p-points have  Tukey reductions which are as close to continuous as possible,
will be carried out in the setting of $\vec{\mathcal{U}}$-trees.

We now  recall the definition of  front.
The reader desiring more background on fronts and $\vec{\mathcal{U}}$-trees than  presented here is referred to \cite{TodorcevicBK10}, pages 12 and 190, respectively.

\begin{defin}\label{def.front}
A family $B$ of finite subsets of some infinite subset $I$ of $\om$ is called a \em front \rm on $I$ if
\begin{enumerate}
\item
$a\not\sqsubset b$ whenever $a, b$ are in $B$; and
\item
For every infinite $X\sse I$ there exists $b\in B$ such that $b\sqsubset X$.
\end{enumerate}
\end{defin}

Recall the following standard set-theoretic notation:  $[\om]^k$  denotes the collection of $k$-element subsets of $\om$, $[\om]^{<k}$ denotes the collection of subsets of $\om$ of size less than $k$,
and $[\om]^{\le k}=[\om]^{<k+1}$.
It is easy to see that for each $k<\om$, $[\om]^{k}$ is a front.

Every front  is lexicographically well-ordered, and hence has a unique lexicographic rank associated with it, namely the ordinal length of its lexicographical well-ordering.
For example, $\rank(\{\emptyset\})=1$,  $\rank([\om]^1)=\om$, and $\rank([\om]^2)=\om\cdot\om$.
We shall usually drop the adjective `lexicographic' when talking about ranks of fronts.

Given a front $B$,
for each $n\in \om$,  we define $B_n=\{b\in B:n=\min(b)\}$
and  $B_{\{n\}}=\{b\setminus \{n\}:b\in B_n\}$.
Then $B=\bigcup_{n\in \om} B_n$, and each $B_n=\{\{n\}\cup a:a\in B_{\{n\}}\}$.
Note that for each $n\in \om$,
$B_{\{n\}}$ is a front on $[n+1,\om)$ with rank strictly less than the rank of $B$.
Conversely, given any collection of fronts $B_{\{n\}}$ on $[n+1,\om)$,
the union $\bigcup_{n\in \om} B_n$ is a front on $\om$, where $B_n$ is defined as above to be $\{\{n\}\cup a:a\in B_{\{n\}}\}$.

Given any  front $B$, we
let $\hat{B}$ denote the collection of all initial segments of members of $B$.
Let $\hat{B}^-$ denote the collection of all proper initial segments of members of $B$;
that is, $\hat{B}^-=\hat{B}\setminus B$.
Both $\hat{B}$ and $\hat{B}^-$ form  trees under the partial ordering $\sqsubseteq$.

\begin{defin}\label{def.Utree}
Given a front $B$ and  a sequence $\vec{\mathcal{U}}=(\mathcal{U}_c:c\in \hat{B}^-)$ of nonprincipal ultrafilters $\mathcal{U}_c$ on $\om$,
a {\em $\vec{\mathcal{U}}$-tree} is a  tree $T\sse \hat{B}$ such that $\emptyset\in T$ and  for each $c\in T\cap \hat{B}^-$,
 $\{n\in\om:c\cup \{n\}\in T\}\in\mathcal{U}_c$.
\end{defin}

\begin{notn}\label{notn.Utrees}
Given a  front $B$ and a sequence $\vec{\mathcal{U}}=(\mathcal{U}_c:c\in \hat{B}^-)$ of nonprincipal ultrafilters on $\om$,
let $\mathfrak{T}=\mathfrak{T}(\vec{\mathcal{U}})$ denote the collection of all $\vec{\mathcal{U}}$-trees.
For any $c\in \hat{B}^-$ and $T\in\mathfrak{T}$,
let
$T_c=\{t\in T:t\sqsubseteq c$ or $t\sqsupset c\}$,
 the tree with stem $c$ consisting of all nodes in $T$ comparable with $c$.
If $T$ is a  $\mathcal{U}$-tree, then the set of maximal branches through $T$, denoted $[T]$, is exactly $T\cap B$.
\end{notn}

Todorcevic pointed out to us the following correspondence between Fubini iterates and ultrafilters on fronts.
Start by fixing a collection
 $\mathcal{P}_0$ of nonprincipal ultrafilters on $\om$.
Given $\al<\om_1$, define $\mathcal{P}_{\al+1}=\{\lim_{n\ra\mathcal{U}}\mathcal{V}_n:\mathcal{U}\in\mathcal{P}_0$ and $\mathcal{V}_n\in\mathcal{P}_{\al}\}$.
For each limit ordinal $\al$, define $\mathcal{P}_{\al}=\bigcup_{\beta<\al}\mathcal{P}_{\beta}$.
Then $\mathcal{P}_{<\om_1}:=\bigcup\{\mathcal{P}_{\al}:\al<\om_1\}$ is the collection of all iterated Fubini products of nonprincipal ultrafilters on $\om$.
Each $\mathcal{W}\in\mathcal{P}_{<\om_1}$ has a well-defined notion of rank, namely rank$(\mathcal{W})$ is the least $\al<\om_1$ for which it is a member of $\mathcal{P}_{\al}$.
The following lemma will be used in the next section, with $\mathcal{P}_0$ being the collection of all p-points.

\begin{lem}\label{lem.precise.connection}
If $\mathcal{W}\in\mathcal{P}_{<\om_1}$,
 then there is a  front $B$ and ultrafilters $\mathcal{U}_c\in\mathcal{P}_0$, $c\in \hat{B}^-$, such that $\mathcal{W}$ is isomorphic to the ultrafilter on $B$ generated by the $(\mathcal{U}_c:c\in \hat{B}^-)$-trees.
\end{lem}

\begin{proof}
We prove by induction on $\al<\om_1$ that the fact holds for every ultrafilter in $\mathcal{P}_{\al}$.
If $\mathcal{W}\in\mathcal{P}_0$, then $\mathcal{W}$ is represented on the  front $B=[\om]^1$ via the obvious isomorphism $n\mapsto \{n\}$.

Let $1\le \al<\om_1$ and assume the fact holds for each ultrafilter in $\bigcup_{\gamma<\al}\mathcal{P}_{\gamma}$.
If $\al$ is a limit ordinal, then there is nothing to prove, so assume $\al=\beta+1$ for some $\beta<\om_1$.
Suppose that $\mathcal{W}\in\mathcal{P}_{\al}$.
Then $\mathcal{W}=\lim_{n\ra\mathcal{U}}\mathcal{W}_n$, where $\mathcal{U}\in\mathcal{P}_0$
and  for each $n$, $\mathcal{W}_n\in\mathcal{P}_{\beta}$.
By the induction hypothesis, for each $n<\om$
there is a  front $B_{\{n\}}$ on $\om$ and there are ultrafilters $\mathcal{U}_{n,c}\in \mathcal{P}_0$, $c\in {\widehat{B_{\{n\}}}}{}^-$,  such that $\mathcal{W}_n$ is isomorphic to the ultrafilter generated by  $(\mathcal{U}_{n,c}:c\in {\widehat{B_{\{n\}}}}{}^-)$-trees on $B_{\{n\}}$.
Without loss of generality, we may assume that $B_{\{n\}}$ is a  front on $[n+1,\om)$.
In the standard way, we glue the fronts together to obtain a new  front:
Let  $B=\bigcup_{n<\om}\{\{n\}\cup b:b\in  B_{\{n\}}\}$.
Then $B$ is a  front on $\om$.

Let $\mathcal{U}_{\emptyset}=\mathcal{U}$ and $\mathcal{U}_{\{n\}\cup c}=\mathcal{U}_{n,c}$, for each $c\in B_{\{n\}}$.
It is straightforward to check that $\mathcal{W}$ is isomorphic to the ultrafilter on $B$ generated by the $\lgl \mathcal{U}_b:b\in \hat{B}^-\rgl$-trees.
\end{proof}

The case when $\al=2$ of the proof of Lemma \ref{lem.precise.connection} yields the following result, which seems interesting in its own right.

\begin{lem}\label{fact.fubprod}
The Fubini product $\lim_{n\ra\mathcal{U}}\mathcal{V}_n$  of nonprincipal  ultrafilters on $\om$ is isomorphic to the ultrafilter on $B=[\om]^2$ generated by  $\vec{\mathcal{U}}=(\mathcal{U}_c:c\in[\om]^{\le 1})$-trees,
where $\mathcal{U}_{\emptyset}=\mathcal{U}$ and for each $n\in\om$,
$\mathcal{U}_{\{n\}}=\mathcal{V}_n$.
\end{lem}


\section{Basic cofinal maps on iterated Fubini products of p-points}\label{sec.BasicCofinalFubit}

Fubini products of p-points do not in general have continuous Tukey reductions, as pointed out in the introduction (see below for more discussion).
However, we will show that they do have canonical cofinal maps satisfying many of the properties of continuous maps, which we call {\em basic}  (see Definition \ref{defn.basicTredtree} below).
Making use of the natural
 representation of a Fubini iterate of p-points as an ultrafilter generated by $\vec{\mathcal{U}}$-trees on some front $B$ (recall Lemma  \ref{lem.precise.connection}),
we show  in
Theorem \ref{thm.allFubProd_p-point_cts} that countable iterates of  Fubini products of p-points have  {\em basic Tukey reductions}.
Such
 Tukey reductions are continuous on the space $2^{\hat{B}}$ with the Cantor topology, where $\hat{B}$ is the tree consisting of all initial segments of members of the front $B$.
This extends a  key property of p-points (recall Theorem \ref{thm.ppoint}) to  a large class of ultrafilters.
Theorem
\ref{thm.allFubProd_p-point_cts} will be applied
in Sections \ref{sec.directapp} and \ref{sec.finitegen}.

Corollary 11 of Raghavan in \cite{Raghavan/Todorcevic12}
shows that it is impossible for a Fubini product of two non-isomorphic selective ultrafilters to have continuous cofinal Tukey reductions.
 The next proposition shows that Fubini products of nonprincipal ultrafilters  do not have monotone basic Tukey reductions given by an approximating map on the finite subsets of the base for the ultrafilter.
Thus, it is impossible to attain  
 Theorem \ref{thm.ppoint}
 for Fubini products of nonprincipal ultrafilters.

Recall that the Cantor topology on $2^{\om\times\om}$ is the topology generated by basic open sets of the form $\{h\in 2^{\om\times\om}:s\subset h\}$,
where $s$ is a function from some finite subset of $\om\times\om$ into $2$.
Equivalently, letting $\lgl w_i:i<\om\rgl$ be any linear order of the members of $\om\times\om$ in order type $\om$,
the Cantor topology on $2^{\om\times\om}$ is  generated by basic open sets of the form
$\{h\in 2^{\om\times\om}:s\subset h\}$,
where $s$ is a  function from $\lgl w_i:i<k\rgl$ into $2$ for some $k<\om$.
For $k<\om$, let $W_k$ denote $\{w_i:i<k\}$,
so that $2^{W_k}$ denotes the set of all functions with domain $\{w_i:i<k\}$ into $2$.
 Given $k<\om$ and $X\sse\om\times\om$,
let $X \re W_k$ denote the characteristic function of  $\{w_i\in X:i<k\}$ on domain $W_k$.

\begin{prop}\label{prop.ex}
Let $\mathcal{U}$ and $\mathcal{V}$ be any nonprincipal ultrafilters and let $\pi:\om\times\om\ra\om$ be defined by $\pi(m,n)=m$.
Let $f:2^{\om\times\om}\ra 2^{\om}$ be defined by $f(A)=\pi[A]$.
Then for any cofinal subset $\mathcal{C}$ of $\mathcal{U}\cdot\mathcal{V}$,
$f\re \mathcal{C}$ is not 
generated by a monotone basic map in the sense of Definition \ref{defn.basic}:
There is no strictly increasing sequence $(k_m)_{m<\om}$ and monotone basic map $\hat{f}:\bigcup_{m<\om}2^{W_{k_m}}\ra 2^{<\om}$
generating $f\re \mathcal{C}$.
\end{prop}

\begin{proof}
Let $g$ denote the restriction of $f$ to the ultrafilter $\mathcal{U}\cdot\mathcal{V}$.
Notice that $g$ is  a monotone cofinal map onto $\mathcal{U}$.
Suppose toward a contradiction that there is a cofinal subset $\mathcal{C}$ of $\mathcal{U}\cdot\mathcal{V}$ for which
$g\re \mathcal{C}$ is represented by a monotone basic function.
In this context, using the Cantor topology on $2^{\om\times\om}$ in place of $2^{\om}$,
 Definitions
\ref{def.levinitpres} and \ref{defn.basic}
are interpreted as follows:
There is a strictly increasing sequence $(k_m)_{m<\om}$
such that letting $C=\{X \re W_{k_m}:X\in\mathcal{C},\ m<\om\}$,
there is a monotone  basic map  $\hat{g}:C\ra 2^{<\om}$ such that for each $X\in\mathcal{C}$,
\begin{equation}
g(X)=\bigcup_{m<\om}\hat{g}(X \re W_{k_m}).
\end{equation}
To be clear in this context,
$\hat{g}$ is {\em level preserving} means that for each $s\in C\cap 2^{W_{k_m}}$, $\hat{g}(s)$ is a member of $2^m$.
For two members $s$ and $t$ of $C$,
$t$ end-extends $s$, written $s\sqsubseteq t$, if and only if the domain of $s$ is $W_j$ and  the domain of $t$ is $W_l$, for some
$j\le l$,
and for each $i<j$, $s(w_i)=t(w_i)$.

By the Extension Lemma \ref{thm.PropACtsMaps}, there is a monotone map $\tilde{f}:2^{\om\times\om}\ra 2^{\om}$ which is represented by a monotone basic map $\hat{f}$ on $\bigcup_{m<\om}2^{W_{k_m}}$ such that $\tilde{f}\re\mathcal{C}=g\re\mathcal{C}$.
As seen in the 
 proof of Lemma \ref{thm.PropACtsMaps}, modified to the current  context, 
the map $\hat{f}$ is  defined by
\begin{equation}\label{eq.4.2}
\hat{f}(t)(w_i)=\max\{\hat{g}(s)(w_i):s\in C,\  |s|\le k_m \mathrm{\ and \ } [[s]]\sse[[t]]\},
\end{equation}
for $t\in 2^{W_{k_m}}$ and $i<m$.

\begin{claim}\label{claim.ex4}
There is a $j<\om$ and an infinite collection $\{s_l:l<\om\}\sse C$,
satisfying the following:
For each $l<\om$, letting $d_l$ denote $[[s_l]]$,
 $j=\min(\pi[d_l])$,
and
letting $i_l$ denote the least $i$ such that both $w_i\in d_l$ and $j=\pi(w_i)$,
$(i_l)_{l<\om}$ forms a strictly increasing sequence.
\end{claim}

\begin{proof}
For $s\in C$, let $d_s$ denote $[[s]]$.
Suppose toward a contradiction that 
 for each $j<\om$,
 there is an $\tilde{i}_j$
 such that for each $s\in C$ with $j=\min(\pi[d_s])$,
 there is an $i<\tilde{i}_j$ such that $w_i\in d_s$ and $j=\pi(w_i)$.
Fix $X\in\mathcal{C}$.
Since $\mathcal{C}$ is a cofinal subset of the Fubini product of two nonprincipal ultrafilters,
it follows that
the set 
\begin{equation}
J=\{\min(\pi[d_s]):s\in C\mathrm{\ and\ }d_s\sse X\}
\end{equation}
is infinite.
For $j\in J$  let 
\begin{equation}
S_j=\{s\in C:d_s\sse X\mathrm{\ and\ }\min(\pi[d_s])=j\}.
\end{equation}
Then for each $j\in J$ and $s\in S_j$,
 there is  some $w_{i}\in d_s$ with $i<\tilde{i}_j$.
Thus, every member of $\mathcal{C}$ has infinite intersection with $\{w_i: \exists j\, (j=\pi(w_i)\mathrm{\ and\ }i<\tilde{i}_j)\}$.
 This contradicts the fact that $\mathcal{V}$ is nonprincipal.

 Thus, the negation of the supposition holds:
 There is a $j<\om$ such that for each $\tilde{i}<\om$, there is an $s\in C$ with $\min(\pi[d_s])=j$ such that, whenever $w_i\in d_s$ and $j=\pi(w_i)$, then $i\ge \tilde{i}$.
 Fix such a $j$.
 Take $s_0\in C$ such that $\min(\pi[d_{s_0}])=j$.
 Take $i_0$ least such that 
 $w_{i_0}\in d_{s_0}$ and 
  $\pi(w_{i_0})=j$.
Using $i_0$ as the next $\tilde{i}$,
there is an $s_1\in C$ with $\min(\pi[d_{s_1}])=j$ and $i_1>i_0$,
where  $i_1$ is least such that $w_{i_1}\in d_{s_1}$ and $\pi(i_1)=j$.
In this way, one constructs a collection $\{s_l:l<\om\}$ satisfying the Claim.
\end{proof}

Take $j<\om$ and $\{s_l:l<\om\}$ as in the  Claim.
Define $Y_l\in \mathcal{U}\cdot\mathcal{V}$ by
$Y_l=s_l\cup (j,\om)\times\om$.
Then $Y_l\ra Y$, where
$Y= (j,\om)\times\om$, which is a member of $\mathcal{U}\cdot\mathcal{V}$.
Since $\tilde{f}$ is generated by a basic map, $\tilde{f}$ is continuous on $2^{\om\times\om}$.
Hence, $\tilde{f}(Y_l)$ converges to $\tilde{f}(Y)$.

On the other hand,  we shall show that 
for each $l<\om$,
$j$ is in $\tilde{f}(Y_l)$
while
 $j$ is not in $\tilde{f}(Y)$, contradicting continuity of $f$.
Note that for each $l<\om$,
 $s_l\in C$ implies there is an $X\in\mathcal{C}$ whose characteristic function extends $s_l$.
Since  $g$ is generated by the basic map $\hat{g}$, and $j\in  \pi[X]$  implies $j\in g(X)$,
it follows that
 $j\in [[\hat{g}(s_l)]]$.
 By the definition of $\hat{f}$  in (\ref{eq.4.2}), it follows that
 $j\in [[\hat{f}(s_l)]]\sse \tilde{f}(Y_l)$.
 However, $j$ is not in $\tilde{f}(Y)$,
 since $\tilde{f}$ is generated by $\hat{f}$,
it follows from  the definition of $\hat{f}$ in (\ref{eq.4.2}) that
$j\not\in [[\hat{f}(Y\re k_m)]]$ for any $m<\om$.

 Thus,  there is no cofinal $\mathcal{C}\sse\mathcal{U}\cdot\mathcal{V}$ for which $g\re\mathcal{C}$ is  generated by a monotone basic map on the topological space $2^{\om\times\om}$.
\end{proof}

However,
we will soon show
that each ultrafilter $\mathcal{W}$ which is an
iterated Fubini product of p-points has finitely generated Tukey reductions which, moreover, are basic, and hence continuous, with respect to the topology on  the appropriate tree space.
Toward this end, we proceed to give the definition of basic for this context, and then prove the main results of this section.

\begin{notn}\label{notn.nec}
For any subset $A\sse[\om]^{<\om}$, recall that $\hat{A}$ denotes the set of all initial segments of members of $A$.
For any front $B$,    we let $\hat{B}^-$ denote $\hat{B}\setminus B$.
For any subset $A\sse[\om]^{<\om}$ and $k<\om$,
let $A\re k$ denote $\{a\in A:\max(a)<k\}$.
For $A\sse\hat{B}$ and $k<\om$,
we shall abuse notation and also use 
$A\re k$ 
 to denote the characteristic function of the set $A\re k$ on domain $\hat{B}\re k$.
For each $k<\om$, let $2^{\hat{B}\re k}$ denote the
set of all functions from $\hat{B}\re k$ into $\{0,1\}$.
Notice that this is exactly the
 collection of characteristic functions of subsets of $\hat{B}\re k$ on domain $\hat{B}\re k$.
\end{notn}

\begin{defin}\label{defn.basicTredtree}
Let $B$ be a front on $\om$,  $\tilde{T}\sse \hat{B}$ be a tree, and
  $(n_k)_{k<\om}$ be
an increasing sequence.
We say that a function
$\hat{f}:\bigcup_{k<\om}2^{\tilde{T}\re n_k}\ra 2^{<\om}$ is
{\em level preserving} if  $\hat{f}: 2^{\tilde{T}\re {n_k}}\ra 2^{k}$, for each $k<\om$.
$\hat{f}$ is {\em end-extension preserving} if  whenever $k<m$ and $A \sse\tilde{T}$ 
  then $\hat{f}(A \re n_k)=\hat{f}(A\re n_m)\re k$.
$\hat{f}$ is {\em basic} if it is level and end-extensions preserving.
$\hat{f}$ is {\em monotone} if whenever  $A\sse C\sse\tilde{T}$ and $k<\om$,
 then  $[[\hat{f}(A \re n_k)]]\sse [[\hat{f}(C \re n_k)]]$.

Let $\mathcal{U}$ be an ultrafilter on $B$ generated by $(\mathcal{U}_c:c\in \hat{B}^-)$-trees, let $f:\mathcal{U}\ra\mathcal{V}$ be a monotone cofinal map, where $\mathcal{V}$ is an ultrafilter on base $\om$, and let $\tilde{T}\in\mathfrak{T}(\vec{\mathcal{U}})$.
Let $\mathfrak{T}\re\tilde{T}$ denote the set of all $\vec{\mathcal{U}}$-trees contained in $\tilde{T}$.
We say that
$\hat{f}:\bigcup_{k<\om}2^{\tilde{T}\re n_k}\ra 2^{<\om}$ {\em generates} $f$ on $\mathfrak{T}\re \tilde{T}$ if
for each   $T\in\mathfrak{T}\re\tilde{T}$,
\begin{equation}
f([T])=\bigcup_{k<\om} \hat{f}(T \re n_k).
\end{equation}

We say that  $\mathcal{U}$ has {\em basic Tukey reductions}
if whenever $f:\mathcal{U}\ra\mathcal{V}$ is  a monotone cofinal map,
then there is a $\tilde{T}\in\mathfrak{T}(\vec{\mathcal{U}})$ and a basic  map $\hat{f}$ which generates $f$ on $\mathfrak{T}\re \tilde{T}$.
\end{defin}

\begin{rem}\label{rem.basiccts}
Note that if $\hat{f}$ witnesses that $f$ is basic on $\mathfrak{T}\re\tilde{T}$,
then $\hat{f}$ generates a continuous map on the collection of {\em trees} in $\mathfrak{T}\re\tilde{T}$,  continuity being with respect to the Cantor topology on $2^{\hat{B}}$.
Moreover, we may define a  map $\hat{g}$ on $B$ as follows:
For each finite subset $A\sse B$, define $\hat{g}(A)=\hat{f}(\hat{A})$, where $\hat{A}$ is the collection of all initial segments of members of $A$.
Then $\hat{g}$ is finitary, but not necessarily continuous on $2^B$, and $\hat{g}$ generates $f$ on  $\{[T]:T\in\mathfrak{T}\re\tilde{T}\}$ which is a base for the ultrafilter.
Thus,  for ultrafilters generated by $\mathcal{U}$-trees,  basic Tukey reductions imply finitely represented Tukey reductions on the original base set $B$.
\end{rem}

Now we prove the main theorem of this section.
Fix a total order of $[\om]^{<\om}$ in order type $\om$ such that $\max a<\max b$ implies $a\prec b$
for all $a,b\in [\om]^{<\om}$.
Note that  for each $k<\om$, the set $\{c\in[\om]^{<\om}:\max   c=k  \}$ 
forms a finite interval in $([\om]^{<\om},\prec)$.

\begin{thm}\label{thm.allFubProd_p-point_cts}
Let $B$ be any  front and $\vec{\mathcal{U}}=(\mathcal{U}_c:c\in \hat{B}^-)$ be a sequence of p-points.
Then the ultrafilter $\mathcal{U}$ on base $B$ generated by the $\vec{\mathcal{U}}$-trees
has basic Tukey reductions.
Therefore, every countable iteration of Fubini products of p-points has monotone basic Tukey reductions.
\end{thm}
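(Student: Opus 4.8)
The plan is to prove the theorem by induction on the lexicographic rank of the flat-top front $B$, mimicking the structure of the p-point argument of Theorem \ref{thm.ppoint} (Theorem 20 of \cite{Dobrinen/Todorcevic11}) at each node. The base case is $B=[\om]^1$, where an ultrafilter generated by $\vec{\mathcal{U}}$-trees is just a p-point (namely $\mathcal{U}_{\emptyset}$ viewed via $n\mapsto\{n\}$), so Theorem \ref{thm.ppoint} and Remark \ref{rem.cts} give basic Tukey reductions outright (one only needs to translate the basic representation on $2^{\om}$ to the tree-space formulation of Definition \ref{defn.basicTredtree}, which is routine since $\hat{B}=[\om]^{\le 1}$). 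For the inductive step, write $B=\bigcup_{n<\om}B_n$ with $B_n=\{\{n\}\cup a:a\in B_{\{n\}}\}$ and each $B_{\{n\}}$ a flat-top front of strictly smaller rank on $\om\setminus(n+1)$; let $\mathcal{U}_\emptyset$ be the top p-point and, for each $n$, let $\mathcal{W}_n$ be the ultrafilter on $B_{\{n\}}$ generated by the $(\mathcal{U}_c:c\in\widehat{B_{\{n\}}}{}^{\,-})$-trees, so that $\mathcal{U}=\lim_{n\to\mathcal{U}_\emptyset}\mathcal{W}_n$ in the tree representation. By the induction hypothesis each $\mathcal{W}_n$ has basic Tukey reductions.

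Given a monotone cofinal $f:\mathcal{U}\to\mathcal{V}$, I would first run the p-point fusion argument on $\mathcal{U}_\emptyset$: for each $n$ in a set belonging to $\mathcal{U}_\emptyset$, and each finite approximation to a $\vec{\mathcal{U}}$-tree below the node $\{n\}$, one uses the p-point property of $\mathcal{U}_\emptyset$ to diagonalize and stabilize how $f$ depends on the "first coordinate" $n$, producing a $\vec{\mathcal{U}}$-tree $\tilde T$ on which the value $f([T])\cap k$ depends on $T$ only through $\chi_T\re n_k$ for a suitable increasing $(n_k)$. Simultaneously, for each fixed $n$, restricting $f$ to trees with stem $\{n\}$ and projecting gives a monotone cofinal map from (a cofinal subset of) $\mathcal{W}_n$ into $\mathcal{V}$ or into an appropriate quotient; applying the induction hypothesis yields, on a $(\mathcal{U}_c:c\in\widehat{B_{\{n\}}}{}^{\,-})$-tree $\tilde T_n$, a basic map $\hat f_n$ generating $f$ there. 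The heart of the argument is to amalgamate the countably many $\tilde T_n$ and the diagonalization on $\mathcal{U}_\emptyset$ into a single $\tilde T\in\mathfrak{T}(\vec{\mathcal{U}})$ and a single basic $\hat f:\bigcup_k 2^{\tilde T\re n_k}\to 2^{<\om}$ that generates $f$ on $\mathfrak{T}\re\tilde T$ — here one defines $\hat f(\chi_A)$ by taking, as in Claim 1 of Lemma \ref{thm.PropACtsMaps}, a max/union over the contributions of the finitely many first-coordinate nodes and the inductively-obtained $\hat f_n$'s that are "visible" inside $A\re n_k$, checks level- and end-extension-preservation by the stabilization, and checks monotonicity from monotonicity of $f$ and of each $\hat f_n$.

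The main obstacle I expect is precisely this amalgamation across infinitely many nodes: each $\hat f_n$ lives on its own level sequence and its own tree $\tilde T_n$, and one must thin down to a common level sequence $(n_k)$ and a single $\vec{\mathcal{U}}$-tree while preserving that finite approximations already determine $f$ up to the right finite level — this requires a careful fusion (simultaneously diagonalizing the p-point $\mathcal{U}_\emptyset$ against the countably many lower-rank outcomes) together with the compactness/closure bookkeeping of the kind used in Claims 1–2 of Theorem \ref{thm.PropB}. The final sentence of the theorem then follows immediately: by Fact \ref{fact.precise.connection}, every countable iterate of Fubini products of p-points is isomorphic to an ultrafilter on a flat-top front $B$ generated by $\vec{\mathcal{U}}$-trees with each $\mathcal{U}_c$ a p-point, so the first part applies, and the isomorphism transports basic Tukey reductions.
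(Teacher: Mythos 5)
Your base case and the final reduction via Fact \ref{fact.precise.connection} are fine, but the inductive step has a genuine gap at exactly the point you flag as the ``heart of the argument.'' The theorem does not decompose along the Fubini product structure in the way you propose: writing $\mathcal{U}=\lim_{n\ra\mathcal{U}_{\emptyset}}\mathcal{W}_n$, there is no natural monotone \emph{cofinal} map from $\mathcal{W}_n$ to $\mathcal{V}$ induced by $f$. The map $f$ is defined only on sets $[T]$ for full $\vec{\mathcal{U}}$-trees $T$; shrinking only the part of $T$ above a single node $\{n\}$ does not move $f([T])$ cofinally in $\mathcal{V}$, so the induction hypothesis ``each $\mathcal{W}_n$ has basic Tukey reductions'' cannot be invoked to produce the maps $\hat{f}_n$ you want to amalgamate. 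Your phrase ``or into an appropriate quotient'' signals the difficulty but does not resolve it, and the amalgamation of these (unavailable) $\hat{f}_n$'s is then left as an acknowledged obstacle rather than proved.

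The paper avoids this by not inducting on the statement of the theorem at all. The step that decides membership of each $j\le k$ in $f([T])$ from the finite trace $T\re k$ (Stage 1, producing the trees $R^k_i$ with property $(*)_k$) and the p-point diagonalization at every node $c\in\hat{B}^-$ simultaneously (Stage 2, producing $T^*$ and the meshing functions $m(c,\cdot)$ satisfying $(\dag)$ and $(\ddag)$) are carried out directly and uniformly for an arbitrary flat-top front, with no reference to rank. Induction on the rank of $B$ enters only in Stage 3, through the purely combinatorial Lemma \ref{lem.hconditions} about aligning the block structures $n(c,\cdot)$ and the sets $Z_c$ over all $c\in\hat{B}^{--}$; that lemma never mentions $f$ or any hypothesis about cofinal maps on lower-rank fronts, which is what makes the induction legitimate. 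To repair your outline you would need to replace the appeal to the induction hypothesis on each $\mathcal{W}_n$ by a direct, simultaneous fusion over all nodes of $\hat{B}^-$ of this kind.
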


\begin{proof}
Let $\mathcal{V}$ be some ultrafilter Tukey reducible to
$\mathcal{U}$, and let $f:\mathcal{U}\ra\mathcal{V}$ be a monotone cofinal map.
If $\mathcal{V}$ is a principal ultrafilter, say generated by the singleton $\{r\}$, then we claim that the theorem trivially holds.
Since the set consisting of $\{r\}$ is cofinal in $\mathcal{V}$, there is some $X\in\mathcal{U}$ such that $f(X)=\{r\}$.
Set $n_k=k$ and define
$\hat{f}:\bigcup_{k<\om}2^{\hat{B}\re k}\ra 2^{<\om}$
as follows:
For $k<\om$ and  $s\in 2^{\hat{B}\re k}$, $\hat{f}(s)$ is the sequence  in $2^k$  such that for each $i<k$, $\hat{f}(s)(i)=0$ if and only if $i\ne r$.
Then $\hat{f}$ is monotone basic and generates $f$ on $\mathcal{U}\re X$.
Thus, we shall assume from now on that $\mathcal{V}$ is nonprincipal.

We let $\mathfrak{T}$ denote $\mathfrak{T}(\mathcal{U}_c:c\in \hat{B}^-)$,
 the set of all $\vec{\mathcal{U}}$-trees.
Recall that $\mathfrak{T}$ is a base for the ultrafilter $\mathcal{U}$.
We make the convention that $\max \emptyset=-1$.
For each $k<\om$, let $\hat{B}\re k$ denote the collection of all $b\in \hat{B}$ with $\max b<k$.
Thus, $\hat{B}\re 0=\{\emptyset\}$, $\hat{B}\re 1=\{\emptyset,\{0\}\}$, and so forth.
Fix an enumeration of  the finite, non-empty $\sqsubseteq$-closed  subsets of $\hat{B}$ as $\lgl A_i:i<\om\rgl$
so that for each $i<j$, $\max\bigcup A_i\le\max\bigcup A_j$.
Let $(p_k)_{k<\om}$ denote the strictly increasing sequence
 so that for each $k$,
the sequence $\lgl A_i:i<p_k\rgl$ lists all $\sqsubseteq$-closed subsets of $\hat{B}\re k$.
(For example, if $B=[\om]^2$, then $\hat{B}=[\om]^{\le 2}$ and $\hat{B}^-=[\om]^{\le 1}$, and we may let
$A_0=\{\emptyset\}$, $A_1=\{\emptyset, \{0\}\}$,
$A_2=\{\emptyset, \{0\},\{1\}\}$,
$A_3=\{\emptyset,\{0\},\{0,1\}\}$,
$A_4=\{\emptyset,\{0\},\{0,1\},\{1\}\}$,
$A_5=\{\emptyset,\{1\}\}$.
Note that $p_0=1$, $p_1=2$, and $p_2=6$.)

For $k<\om$ and $i<p_k$, define
\begin{equation}\label{eq.20}
\hat{B}^k_i=
A_i\cup
\{b\in\hat{B}:\exists a\in A_i(b\sqsupset a \mathrm{\ and\ }\min(b\setminus a)\ge k)\}.
\end{equation}
Thus, $\hat{B}^k_i$ is the maximal tree in $\mathfrak{T}$ for which $T\re k=A_i$.
For a tree $T\sse\hat{B}$ and $c\in T\cap \hat{B}^-$,
define the notation
\begin{equation}
U_c(T)=\{l>\max(c):c\cup\{l\}\in T\}.
\end{equation}
 We refer to $U_c(T)$ as {\em the set of immediate extensions of $c$ in $T$}.
Note  that if $T\in\mathfrak{T}$, then  for each $c\in T\cap \hat{B}^-$, $U_c(T)$ is a member of  $\mathcal{U}_c$.
For $c\in \hat{B}^-$, recall that $\hat{B}_c$ denotes the tree of all $a\in\hat{B}$ such that either $a\sqsubseteq c$ or else $a\sqsupset c$.

Our  goal is to construct a tree $\tilde{T}\in\mathfrak{T}$ and find a sequence $(n_k)_{k<\om}$ of good cut-off points such that the following  $(\circledast)$ holds.
\begin{enumerate}
\item[$(\circledast)$]
For each $T\sse\tilde{T}$ in $\mathfrak{T}$,  $k<\om$, and $i<p_{n_k}$ such that $A_i=T\re n_k$, for every $j\le k$,
$$
j\in f([T]) \Llra j\in f([\tilde{T}\cap \hat{B}^{n_k}_i]).
$$
\end{enumerate}

\begin{claim1}\label{clm.circldastgivesthm}
The property  $(\circledast)$ implies that $f$  has  monotone  basic Tukey reductions.
\end{claim1}

\begin{proof}
For $k<\om$ and $T\in\mathfrak{T}\re \tilde{T}$,
define
\begin{equation}
\hat{f}(T\re n_k)=f([\tilde{T}\cap \hat{B}^{n_k}_i])\re k,
\end{equation}
where $i$ is {\em the} integer below $p_{n_k}$ such that $T\re n_k=A_i$.
By definition, $\hat{f}$ is level preserving.
Let $l>k$ and  let $m$ be such that $T\re n_l=A_m$.
Then $A_m\re n_k=A_i$.
For $j< k$,
 $(\circledast)$ implies that
$j\in f([\tilde{T}\cap \hat{B}^{n_k}_i])$ if and only if
$j\in f([T])$
if and only if
$j\in f([\tilde{T}\cap \hat{B}^{n_l}_m])$.
Thus, $j\in [[\hat{f}(T\re n_k)]]$ if and only if $j\in [[\hat{f}(T\re n_l)]]$.
Therefore, $\hat{f}$ is end-extension preserving;
that is, $\hat{f}(T\re n_l)\re k=\hat{f}(T\re n_k)$.
Furthermore, $f([T])=\bigcup_{k<\om}\hat{f}(T\re n_k)$;
thus, $\hat{f}$ generates $f$ on $\mathfrak{T}\re \tilde{T}$.
To see that $\hat{f}$ is monotone, suppose that $S\re n_k\sse T\re n_k$ for some $S,T\in \mathfrak{T}\re \tilde{T}$.
Let $i,j<p_k$ be such that $A_i=S\re n_k$ and $A_j=T\re n_k$.
Then 
\begin{equation}
\hat{f}(S\re n_k)=f([\tilde{T}\cap \hat{B}^{n_k}_i])\re k \sse f([\tilde{T}\cap \hat{B}^{n_k}_j])\re k=\hat{f}(T\re n_k),
\end{equation}
where $\sse$ holds because of $(\circledast)$ and the fact that $f$ is monotone and $\tilde{T}\cap \hat{B}^{n_k}_i\sse \tilde{T}\cap \hat{B}^{n_k}_j$.
Therefore, $\hat{f}$ is a monotone basic map  generating $f$  on $\mathfrak{T}\re\tilde{T}$.
\end{proof}

The construction of $\tilde{T}$ and $(n_k)_{k<\om}$ takes place in three stages.
\vskip.1in

\noindent \bf Stage 1. \rm
In the first stage toward the construction of $\tilde{T}$,
we will choose some $R^k_i\in\mathfrak{T}$ with $A_i\sse R_i^k$
such that  for all  $k<\om$, the following  holds:
\begin{enumerate}
\item[$(*)_k$]
For all  $i<p_k$ and  $T\sse R^k_i$  in $\mathfrak{T}$ with $T\re k=A_i$,
for each $j\le k$,
 $j\in f([T])\Llra j\in f([R^k_i])$.
\end{enumerate}

We point out  that for any  front,
 $A_0$ is always $\{\emptyset\}$ and $p_0=1$.
Since we are assuming $\mathcal{V}$ is nonprincipal,
choose  an  $R^0_0$ in $\mathfrak{T}$ so that $0\not\in f([R^0_0])$.
Now let  $k>0$, and suppose we have chosen $R^{l}_j$ for all $l<k$ and $j<p_{l}$.
For
 $i<p_{k-1}$, if there is an  $R\sse R^{k-1}_i$  in $\mathfrak{T}$ such that $R\re k=A_i$ and
$k\not\in f([R])$, then let $R^k_i$ be such an $R$;
if not, let $R^k_i=R^{k-1}_i\cap \hat{B}^k_i$.
Now suppose that $p_{k-1}\le i<p_k$.
If there is an $R\in\mathfrak{T}$ such that  $R\re k=A_i$
and $0\not \in f([R])$,  let $R^k_{i,0}$ be such an $R$;
if not,  let $R^k_{i,0}=\hat{B}^k_i$.
Given $R^k_{i,j}$ for $j<k$,
if there is an $R\in\mathfrak{T}$ such that $R\sse R^k_{i,j}$, $R\re k=A_i$,
and $j+1\not \in f([R])$, then let $R^k_{i,j+1}$ be such an $R$;
if not, then let $R^k_{i,j+1}=R^k_{i,j}$.
Finally, let $R^k_i=R^k_{i,k}$.

It follows from the construction that for all $1\le l\le k$ and $p_{l-1}\le i<p_l$,
\begin{equation}
R^l_{i,0}\contains R^l_{i,1}\contains\dots\contains R^l_{i,l}=R^l_i\contains  R^k_i,
\end{equation}
and moreover,    for any $j\le l$,
\begin{equation}
R^l_{i,j}\re l=R^l_i\re l=R^k_i\re k= A_i.
\end{equation}

Fix $k<\om$:
we  check  that $(*)_k$ holds.
Let  $i<p_k$, $T\sse R^k_i$ in $\mathfrak{T}$ with $T\re k=A_i$, and $j\le k$ be given.
If  $j\in f([T])$, then
$j$ must be in
$f([R^k_i])$, since $T\sse R^k_i$ and $f$ is monotone.
Now suppose that $j\not\in f([T])$; we will show that $j\not\in f([R^k_i])$.
Let $p_{-1}=0$,
and let $l\le k$ be the integer satisfying $p_{l-1}\le i<p_l$.
Note that  $\max\bigcup A_i=l-1$.
Thus,  $T\re k= A_i=T\re l$.
We now have two cases to check.

Case 1: $j\le l$.
Notice that $T\sse R^k_i\sse R^l_i\sse R^l_{i,j}$.
If $j=0$, then $R^l_{i,j}\sse \hat{B}^l_i$ and we let $R'$ denote $\hat{B}^l_i$;
if $j>0$, then $R^l_{i,j}\sse R^l_{i,j-1}$ and we let $R'$ denote $R^l_{i,j-1}$.
Since $j$ is not in  $f([T])$ and $T\re l=A_i$,
$T$ is a witness that there is an $R\sse R'$ with $R\re l=A_i$ such that $j\not\in f([R])$.
Thus, $R^l_{i,j}$ was chosen so that $j\not \in f([R^l_{i,j}])$.
It follows that $j\not\in f([R^k_i])$, since $R^k_i\sse R^l_{i,j}$ and $f$ is monotone.

Case 2:  $l<j\le k$.
In this case, $T\sse R^k_i\sse R^j_i\sse R^{j-1}_i$.
Since $T$ is a witness that there is an
$R\sse R^{j-1}_i$ with $R\re l=A_i$ and $j\not\in f([R])$,
$R^j_i$ was chosen so that $j\not\in f([R^j_i])$.
Thus, $j\not\in f([R^k_i])$, since $R^k_i\sse R^j_i$ and $f$ is monotone.

Therefore, $j\in f([T])$ if and only if $j\in f([R^k_i])$;
hence $(*)_k$ holds.
This concludes  Stage 1 of our construction.
\vskip.1in

Given $k<\om$ and $c\in \hat{B}^-\re k$,  define
\begin{equation}\label{eq.defSnkc}
S^k_c=\bigcap\{R^l_i: l\le k,\ i<p_l,\ \mathrm{and}\ c\in R^l_i\}.
\end{equation}
We claim that $S^k_c$ is a member of $\mathfrak{T}$  and that $c\in S^k_c$.
To see this, notice that
for  $c\in \hat{B}^-\re k$, letting $l\le k$ be least such that $l>\max c$,
then $c$
 is in $A_i$ for at least one $i<p_l$.
Since $A_i\sse R^l_i$, the set
$\{(l,i): l\le k,\ i<p_l,\ \mathrm{and}\ c\in R^l_i\}$ is nonempty;
hence, $c\in S^k_c$.
Moreover, $S^k_c$ is a member of $\mathfrak{T}$, since it is a finite intersection of members of $\mathfrak{T}$.
It follows that for each $a\in S^k_c\cap \hat{B}^-$, the set of $\{l>\max(a):a\cup\{l\}\in S^k_c\}$ is a member of the ultrafilter $\mathcal{U}_c$.
Define
\begin{equation}\label{eq.defUkc}
U^k_c:=U_c(S^k_c)=\{l>\max c:c\cup\{l\}\in S^k_c\}.
\end{equation}
Thus, for  each $c\in \hat{B}^-$ and $j=\max(c)+1$, we have
$S^j_c\contains S^{j+1}_c\contains\dots$, each of which is a member of $\mathfrak{T}$;
and
$U_c^j\contains U_c^{j+1}\contains\dots$, each of which is a member of the p-point  $\mathcal{U}_c$.
\vskip.1in

\bf Stage 2. \rm
In this stage we  construct a tree $T^*$  in $\mathfrak{T}$
which will be thinned down one more time in Stage 3 to obtain a subtree $\tilde{T}\sse T^*$ in $\mathfrak{T}$ such  that $f\re\mathfrak{T}\re\tilde{T}$ is basic.
The tree $T^*$ which we construct in this stage
will have sets of immediate successors
\begin{equation}\label{eq.U_c}
U_c:=U_c(T^*)=\{l\ge \max(c) +1:c\cup\{l\}\in T^*\},
\end{equation}
for  $c\in T^*\cap \hat{B}^-$.
The sets $U_c$  will have  interval gaps which have right endpoints which line up often and in a useful way (meshing).
This will
 aid in  finding  good cut-off points $n_k$ needed  in Stage 3 to thin $T^*$ down to $\tilde{T}$.
Toward obtaining these interval gaps, we will construct  a family of  functions which we call  {\em meshing functions} $m(c,\cdot):\om\ra\om$ satisfying the following
`meshing property':
\begin{enumerate}
\item[$(\dag)$]
For each  $c\in \hat{B}^-$ and $j<\om$,
if $a\in\hat{B}^-$ is such that $a\prec c$,
then there exists $i<\om$ such that
 $m(a,2i)=m(c,2j)$.
\end{enumerate}
The meshing functions of $(\dag)$ will  aid in
 obtaining a tree $T^*\in\mathfrak{T}$ with the following properties:
\begin{enumerate}
\item[$(\ddag)$]
For all $c\in T^*\cap \hat{B}^-$,
\begin{enumerate}
\item[(a)]
$U_c\sse U_c^{\max(c)+1}$; and
\item[(b)]
For all  $i<\om$,
$U_c\setminus m(c,2i)
=U_c\setminus m(c,2i+1)
\sse U_c^{m(c,2i)}$.
\end{enumerate}
\end{enumerate}

We now begin the construction of the meshing functions $m(c,\cdot)$ and the sets $U_c$,
 proceeding by recursion on the well-ordering $(\hat{B}^-,\prec)$.
Since  $\emptyset$ is $\prec$-minimal in $\hat{B}^-$, we start by choosing
$g_{\emptyset}$, $m(\emptyset,\cdot)$, and $Y_{\emptyset}$   as follows.
Since $\mathcal{U}_{\emptyset}$ is a p-point,
we may choose a $U_{\emptyset}^*\in\mathcal{U}_{\emptyset}$ such that $U_{\emptyset}^*\sse^* U_{\emptyset}^k$ for each $k$.
(Recall the definition of $U_c^k$ from equation (\ref{eq.defUkc}).)
Let  $g_{\emptyset}:\om\ra\om$  be a
 strictly increasing function such that  for each $k$,
$U_{\emptyset}^*\setminus g_{\emptyset}(k+1)\sse U_{\emptyset}^{g_{\emptyset}(k)}$,
and $g_{\emptyset}(0)>0$.
If $\bigcup_{i\in\om}[g_{\emptyset}(2i),g_{\emptyset}(2i+1)) \in\mathcal{U}_{\emptyset}$, then
define $m(\emptyset,k)=g_{\emptyset}(k+1)$;
otherwise,
$\bigcup_{i\in\om}[g_{\emptyset}(2i+1),g_{\emptyset}(2i+2))\in\mathcal{U}_{\emptyset}$,
and we define $m(\emptyset,k)=g_{\emptyset}(k)$.
Let $Y_{\emptyset}=\bigcup_{i\in\om}[m(\emptyset,2i+1),m(\emptyset,2i+2))$ and
define
\begin{equation}
U_{\emptyset} =U_{\emptyset}^0\cap U_{\emptyset}^*\cap Y_{\emptyset}.
\end{equation}
Note that for each $k$, $U_{\emptyset}\setminus m(\emptyset,k+1)\sse U_{\emptyset}^{m(\emptyset,k)}$.

Now suppose $c\in \hat{B}^-$ and for all $b\prec c$ in $\hat{B}^-$, $g_b$ and $m(b,\cdot)$ have been defined.
Since $\mathcal{U}_{c}$ is a p-point, there is a $U_{c}^*\in\mathcal{U}_{c}$   for which $U_{c}^*\sse^*U_{c}^k$, for all $k>\max c$.
Let $a$ denote the immediate $\prec$-predecessor of $c$ in $\hat{B}^-$.
Let  $g_{c}:\om\ra\om$  be a
 strictly increasing function such that  $g_c(0)>\max \{\max c,g_a(2)\}$, and
\begin{enumerate}
\item[($1_{g_c}$)]
For each $i<\om$,
$U_{c}^*\setminus g_{c}(i+1)\sse U_{c}^{g_{c}(i)}$; and
\item[($2_{g_c}$)]
  For each $j<\om$, there is an $i>0$ such that
$g_{c}(j)=m(a, 2i)$.
\end{enumerate}
Let $Y_{c}$ denote the one of the two sets
$\bigcup_{i\in\om}[g_{c}(2i),g_{c}(2i+1))$
or
$\bigcup_{i\in\om}[g_{c}(2i+1),g_{c}(2i+2))$
which is in
$\mathcal{U}_{c}$.
In the first case define
define
$m(c,i)=g_{c}(i+1)$;
in the second case
$m(c,i)=g_{c}(i)$.
Then
\begin{equation}\label{eq.defYc}
Y_c=\bigcup_{i<\om}[m(c,2i+1),m(c,2i+2))
\end{equation}
and  is in $\mathcal{U}_c$.
Let
\begin{equation}\label{eq.U_c}
U_{c} =U_c^{\max(c)+1}\cap U_{c}^*\cap Y_{c}.
\end{equation}
This concludes the recursive definition.

We check that $(\dag)$ holds.
Let  $c\in \hat{B}^-$ and
let $a_0\prec \dots\prec a_l\prec c$ be the enumeration of all $\prec$-predecessors of $c$ in $\hat{B}^-$.
Let $j<\om$ be given.
Either $m(c,2j)=g_c(2j)$ or $m(c,2j)=g_c(2j+1)$.
By ($2_{g_c}$), $g_c(2j)=m(a_l,2i)$ for some $i$, and  $g_c(2j+1)=m(a_l,2i)$ for some $i$.
Thus, there is an $i$ such that $m(a_l,2i)=m(c,2j)$.
Let $i_l$ denote this $i$.
Likewise, either $m(a_l,2i_l)=g_{a_l}(2i_l)$ or $m(a_l,2i_l)=g_{a_l}(2i_l+1)$.
By ($2_{g_{a_l}}$), $g_{a_l}(2i_l)=m(a_{l-1},2i)$ for some $i$, and  $g_{a_l}(2i_l+1)=m(a_{l-1},2i)$ for some $i$.
Let $i_{l-1}$ denote the $i$ such that $m(a_{l-1},2i_{l-1})=m(a_l,2i_l)$.
Continuing in this manner, we obtain numbers $i_k$, $k\le l$, such that
\begin{equation}
m(c,2j)=m(a_l,2i_l)=m(a_{l-1},2i_{l-1})=\dots =m(a_0,2i_0).
\end{equation}
Hence,  $(\dag)$ holds.

\begin{claim2}\label{claim.referee}
There exists a strictly increasing sequence $(m_k)_{k<\om}$ such that, for all $k$,
\begin{equation}
\forall c\in \hat{B}^-\re m_k\  \exists r\, (m(c,2r)=m_{k+1}).
\end{equation}
\end{claim2}

\begin{proof}
Let $m_0$ be arbitrary, and let $c_0$ be the $\prec$-maximum of $\hat{B}^-\re m_0$.
Since $m(c_0,\cdot)$ is strictly increasing, we can fix $j_0\in\om$ such that $m(c_0,2j_0)>m_0$.
Let $m_1=m(c_0,2j_0)$.
In general, given $m_k$, let $c_k$ be the $\prec$-maximum of $\hat{B}^-\re m_k$.
Since $m(c_k,\cdot)$ is strictly increasing, we can fix  some $j_{k}<\om$ such that $m(c_k,2j_k)>m_k$, and let
 $m_{k+1}=m(c_k,2j_k)$.
In this manner, we inductively construct the sequence $(m_k)_{k<\om}$.
To check that this sequence has the desired property, let $k<\om$ be given and
fix $c\in\hat{B}^-\re m_k$.
Since $c\preccurlyeq c_k$,
it follows from $(\dag)$ that there exists an $r$ such that
$m(c,2r)=m(c_k,2j_k)=m_{k+1}$.
\end{proof}

Let  $T^*$ be the  tree in $\mathfrak{T}$ defined
by declaring for each $c\in \hat{B}^-\cap T^*$, $U_c(T^*)=U_c$.
If the reader is not satisfied with this top-down construction (which {\em is} precise as $\emptyset$ is in  every member of $\mathfrak{T}$ and this completely determines the rest of $T^*$), we point out that
$T^*$ can also be seen as being constructed
  level by level as follows.
Let $\emptyset\in T^*$,
and for each $l\in U_{\emptyset}$,
put $\{l\}$ in $T^*$, so that the first level of $T^*$ is exactly $\{\{l\}:l\in U_{\emptyset}\}$.
Suppose we have constructed the tree $T^*$ up to level $k$, meaning that we know exactly what $T^*\cap \hat{B}\cap[\om]^{\le k}$ is.
For each $c\in \hat{B}^-\cap T^*\cap[\om]^{k}$,
let the immediate successors of $c$ in $T^*$ be exactly the set $U_c$;
in other words, for each $l>\max c$, put $c\cup\{l\}\in T^*$ if and only if $l\in U_c$.
Recalling that $\max c<g_c(0)\le m(c,0)$ and $U_c\sse Y_c=Y_c\setminus m(c,0)$,
we see that each element of $U_c$ is strictly greater than $\max c$.
Hence, by constructing $T^*$ in this manner, we obtain a member of $\mathfrak{T}$ such that for each $c\in T^*\cap \hat{B}^-$, $U_c(T^*)$ is exactly $U_c$.

We now check that   $(\ddag)$ holds.
Let  $c\in T^*\cap \hat{B}^-$ be given.
By (\ref{eq.U_c}),
$U_c\sse U^{\max(c)+1}_c$,  so
 $(\ddag)$ (a) holds.
By equation (\ref{eq.defYc}),
 we see that  $Y_c\cap [m(c,2i),m(c, 2i+1))=\emptyset$ for each $i$.
Thus,
\begin{equation}\label{eq.dagb}
U_c\cap [m(c,2i),m(c,2i+1))=\emptyset,
\end{equation}
since $U_c\sse Y_c$ by (\ref{eq.U_c}).
Recall that $U^*_c$ diagonalizes the collection of sets $U_c^k$ for all $k>\max c$, and  the function $g_c$ was chosen to witness this diagonalization so that ($1_{g_c}$) holds.
Either
$m(c,i)=g_c(i)$ and $m(c,i+1)=g_c(i+1)$,
or else
$m(c,i)=g_c(i+1)$ and $m(c,i+1)=g_c(i+2)$.
In either case, ($1_{g_c}$) implies that $U^*_c\setminus
m(c,i+1)\sse U_c^{m(c,i)}$.
Thus
\begin{equation}\label{eq.dagc}
U_{c}\setminus m(c,i+1)\sse U_{c}^{m(c,i)},
\end{equation}
since $U_c\sse U^*_c$ by (\ref{eq.U_c}).
$(\ddag)$ (b) follows from (\ref{eq.dagb}) and (\ref{eq.dagc}).
This finishes Stage 2 of our construction.
\vskip.1in

\noindent\bf Stage 3. \rm
We will show that there is a strictly increasing sequence  $(n_k)_{k<\om}$ and a subtree $\tilde{T}\sse T^*$ in $\mathfrak{T}$ so that for all $k$,
\begin{equation}\label{eq.new4.13}
\forall c\in\tilde{T}\cap(\hat{B}^-\re n_k)\ \exists r\, (m(c,2r)=n_k).
\end{equation}
The  following lemma  uses induction on the rank of the front.

\begin{lem}\label{lem.hconditions.ref}
Let $B$ be a front on $\om$, and let $T^*$ be a $\vec{\mathcal{U}}$-tree,
where $\vec{\mathcal{U}}=\lgl\mathcal{U}_c:c\in \hat{B}^-\rgl$
is a sequence of non-principal ultrafilters on $\om$.
Suppose that strictly increasing functions $m(c,\cdot):\om\ra\om$ for every $c\in\hat{B}^-$ and a strictly increasing sequence $(m_k)_{k<\om}$ are given such that,
for all $k$,
\begin{equation}\label{eq.oldm_k}
\forall c\in\hat{B}^-\re m_k\ \exists r\, (m(c,2r)=m_{k+1}).
\end{equation}
Then there exist $\tilde{T}\sse T^*$ such that $\tilde{T}$ is a $\vec{\mathcal{U}}$-tree and a subsequence $(n_k)_{k<\om}$ of $(m_k)_{k<\om}$
such that, for all $k$,
\begin{equation}\label{eq.n_kproperty}
\forall c\in\tilde{T}\cap(\hat{B}^-\re n_k)\  \exists r \, (m(c,2r)=n_k).
\end{equation}
\end{lem}

\begin{proof}
The proof will be by induction on the rank of $B$.
First assume rank$(B)=1$.
In this case, $B=\hat{B}=\{\emptyset\}$ and $\hat{B}^-=\{\}$.
Also notice that the only $\lgl\rgl$-tree is $\{\emptyset\}$.
Therefore, setting $\tilde{T}=\{\emptyset\}$ and $(n_k)_{k<\om}=(m_k)_{k<\om}$ will work.

Now assume that rank$(B)>1$.
Then, for every $l<\om$, $B_{\{l\}}$ is  a front on $[l+1,\om)$ of rank strictly less than rank$(B)$.
Given $l<\om$, set $m_l(c,\cdot)=m(\{l\}\cup c,\cdot)$ for $c\in {\hat{B}_{\{l\}}}^-$.
For $l\in U_{\emptyset}(T^*)$,
define
\begin{equation}\label{eq.T^*_l}
T^*_l=\{c\in \hat{B}_{\{l\}}:\{l\}\cup c\in T^*\},
\end{equation}
and observe that $T_l^*$ is a $\vec{\mathcal{U}}_l$-tree, where $\vec{\mathcal{U}}_l=\lgl\mathcal{U}_{\{l\}\cup c}:c\in{\hat{B}_{\{l\}}}^-\rgl$.

Next, we will inductively define subsequences $(n_k^j)_{k<\om}$ of $(m_k)_{k<\om}$ for every $j<\om$.
We will also make sure that $(n^h_k)_{k<\om}$ is a subsequence of $(n^j_k)_{k<\om}$ whenever $h\ge j$.
Furthermore, we will obtain a $\vec{\mathcal{U}}_l$-tree $\tilde{T}_l\sse T^*_l$ for every $l\in U_{\emptyset}(T^*)$, such that, for all $k$,
\begin{equation}\label{eq.ref.1}
\forall c\in \tilde{T}_l\cap ({\hat{B}_{\{l\}}}^-\re n^l_k)\
\exists r \, (m_l(c,2r)=n^l_k).
\end{equation}
Let $(m^0_k)_{k<\om}=(m_k)_{k<\om}$.
If $0\not\in U_{\emptyset}(T^*)$, simply let $(n^0_k)_{k<\om}=(m^0_k)_{k<\om}$.
If $0\in U_{\emptyset}(T^*)$,
apply the induction hypothesis to $B_{\{0\}}$ and $T_0^*$, with respect to the functions $m_0(c,\cdot)$ and the sequence $(m_k^0)_{k<\om}$.
This will yield a $\vec{\mathcal{U}}_0$-tree $\tilde{T}_0\sse T^*_0$ and a subsequence $(n^0_k)_{k<\om}$ of $(m^0_k)_{k<\om}$.

Let $(m^1_k)_{k<\om}=(n^0_k)_{k<\om}$.
If $1\not\in U_{\emptyset}(T^*)$, simply let $(n^1_k)_{k<\om}=(m^1_k)_{k<\om}$.
If $1\in U_{\emptyset}(T^*)$, apply the induction hypothesis to $B_{\{1\}}$ and $T_1^*$, with repsect to the functions $m_1(c,\cdot)$ and the sequence $(m_k^1)_{k<\om}$.
Continue 
 as in these first two steps to complete the inductive construction.

Set $h(0)=n^0_0$.
Given $h(k)$, fix $q(k)$ such that $n^{h(k)}_{q(k)}>h(k)$,
then set $h(k+1)=n^{h(k)}_{q(k)}$.
Notice that exactly one of
$\bigcup_{k<\om}[h(2k),h(2k+1))$ and
$\bigcup_{k<\om}[h(2k+1),h(2k+2))$
will belong to $\mathcal{U}_{\emptyset}$,
and denote it by $Z$.
Set $U_{\emptyset}(\tilde{T})=U_{\emptyset}(T^*)\cap Z$,
then define
\begin{equation}\label{eq.ref2}
\tilde{T}=\{\emptyset\}\cup\bigcup_{l\in U_{\emptyset}(\tilde{T})}\{\{l\}\cup c:c\in\tilde{T}_l\}.
\end{equation}
It is straightforward to check that $\tilde{T}$ is a $\vec{\mathcal{U}}$-tree contained in $T^*$.
If $Z=\bigcup_{k<\om}[h(2k),h(2k+1))$, then define $n_k=h(2k+2)$,
otherwise define $n_k=h(2k+1)$.
We will only complete the proof in the first case, as the other case is similar.

Fix $k<\om$ and $c\in \tilde{T}\cap(\hat{B}^-\re n_k)$.
If $c=\emptyset$, then $c\in \hat{B}^-\re m_{j-1}$, where $j>0$ is such that $m_j=n_k$.
Therefore, there exists $r$ such that $m(c,2r)=m_j=n_k$.
Now assume that $c\ne\emptyset$, and let $l=\min c$.
Since $l\in Z$ and $l<h(2k+2)$, we must have $l<h(2k+1)$.
In particular, $(n^{h(2k+1)})_{k<\om}$ is a subsequence of $(n^l_k)_{k<\om}$.
Therefore, there exists $q$ such that
$n^{h(2k+1)}_{q(2k+1)}=n^l_q$.
Finally, since $c\setminus \{l\}\in\tilde{T}_l\cap (\hat{B}_{\{l\}}^-\re n_q^l)$,
we see that there exists $r$ such that
\begin{equation}\label{eq.ref3}
m(c,2r)=m_l(c\setminus\{l\},2r)=n^l_q=n^{h(2k+1)}_{q(2k+1)}=h(2k+2)=n_k,
\end{equation}
which is what we needed to show.
\end{proof}
Taking $\tilde{T}$ as in Lemma \ref{lem.hconditions.ref} 
concludes Stage 3 of the construction.
\vskip.1in

Finally, we check that $(\circledast)$ holds.
Toward this, we first show that for all $k<\om$ and $i<p_{n_k}$, $\tilde{T}\cap\hat{B}^{n_k}_i\sse R^{n_k}_i$.
It will follow that
for each $T\in\mathfrak{T}\re\tilde{T}$ with $T\re n_k=A_i$, we in fact have $T\sse R^{n_k}_i$.
This along with
$(*)_{n_k}$ for all $k<\om$ will yield $(\circledast)$.

\begin{claim3}\label{claim.checkingcircledastom2}
Let  $k<\om$ and $i<p_{n_k}$   be given, and suppose that $A_i\sse \tilde{T}$.
Then $\tilde{T}\cap\hat{B}^{n_k}_i\sse R^{n_k}_i$.
\end{claim3}

\begin{proof}
Let $Q$ denote $\tilde{T}\cap\hat{B}^{n_k}_i$.
Since $A_i\sse \tilde{T}$, we see that $Q\re n_k=A_i$ which equals $R^{n_k}_i\re n_k$.
Thus, to prove the claim it is enough to show
 that
 for each $c\in Q\cap \hat{B}^-$,
\begin{equation}\label{eq.goaldast}
U_c(Q)\setminus n_k\sse U_c(R^{n_k}_i).
\end{equation}
Since $Q\sse\tilde{T}\sse T^*$, we see that
 for all $c\in Q\cap\hat{B}^-$,
\begin{equation}\label{eq.plus}
U_c(Q)\setminus n_k
\sse U_c(\tilde{T})\setminus n_k
\sse U_c(T^*)\setminus n_k.
\end{equation}
We have two cases for $c$.

Case 1: \rm $c\in Q\cap (\hat{B}^-\re n_k)$.
Then by   Lemma \ref{lem.hconditions.ref},
there is an $r$ such that $m(c,2r)=n_k$.
By  $(\ddag)$ (b),
we have that
\begin{equation}\label{eq.times}
U_c(T^*)\setminus n_k=
U_c(T^*)\setminus m(c, 2r +1)\sse U_c^{m(c, 2r)} =U_c^{n_k}.
\end{equation}
Since $Q\re n_k=A_i\sse R^{n_k}_i$,
$c$ is in $R^{n_k}_i$,
and hence, $S_c^{n_k}\sse R^{n_k}_i$,
recalling
(\ref{eq.defSnkc}).
Therefore, recalling (\ref{eq.defUkc}),
\begin{equation}\label{eq.squaredot}
U_c^{n_k}=U_c(S^{n_k}_c)\sse U_c( R_i^{n_k}).
\end{equation}
Hence, by (\ref{eq.plus}), (\ref{eq.times}), and (\ref{eq.squaredot}), we see that
\begin{equation}
U_c(Q)\setminus n_k\sse  U_c(T^*)\setminus n_k\sse U_c^{n_k} \sse U_c(R^{n_k}_i).
\end{equation}

Case 2: \rm 
For  $c\in Q\cap \hat{B}^-$ such that $\max c\ge n_k$, 
it follows from  $(\ddag)$ (a) that 
$U_c(T^*)\sse U_c(S^{\max(c)+1}_c)$.
The proof will proceed by induction on the cardinality of $c\setminus n_k$.

Suppose $|c\setminus n_k|=1$.
Let $l=\max c$, and let $a$ denote $c\setminus\{l\}$.
Then $l \in U_a(Q)\setminus n_k$.
Further,
$a$ is a member of $R^{n_k}_i$, since $a\in Q\re n_k=A_i\sse R^{n_k}_i$.
Since
by Case 1, $U_a(Q)\setminus n_k$ is contained in $U_a(R^{n_k}_i)$,
we have that $c\in R^{n_k}_i$.
Further,
\begin{equation}\label{eq.plusdiamond}
U_c(Q)\sse U_c(T^*) \sse U_c(S_c^{l+1})\sse U_c( R^{n_k}_i).
\end{equation}
since $l+1>n_k$, $i<p_{n_k}$ and $c\in R^{n_k}_i$ imply that $S^{l+1}_c\sse R^{n_k}_i$,
by the definition  (\ref{eq.defSnkc}) of $S^{l+1}_c $.
Thus, Case 2 holds for the basis of our induction scheme.

Now assume that  Case 2 holds for all $c\in Q\cap \hat{B}^-$ with $1\le |c\setminus n_k|\le m$.
Suppose $c\in Q\cap \hat{B}^-$ with $|c\setminus n_k|=m+1$.
Letting $l=\max c$ and  $a=c\setminus \{l\}$, the induction hypothesis applied to $a$ yields that $a\in R^{n_k}_i$ and
$U_a(Q)\sse U_a(R^{n_k}_i)$.
Thus, $c\in R^{n_k}_i$.
Again, as in  (\ref{eq.plusdiamond}),
we find that $U_c(Q)\sse U_c(R^{n_k}_i)$, which finishes the proof of Case 2.
\end{proof}

To finish the proof of the theorem, we prove that $(\circledast)$ holds.
Let  $T\in\mathfrak{T}\re \tilde{T}$, $k<\om$, and suppose $i<p_{n_k}$ is the integer such that   $T\re n_k=A_i$.
Letting $Q$ denote $\tilde{T}\cap\hat{B}^{n_k}_i$,
we see that $T\sse Q$.
By Claim 3, $Q\sse R^{n_k}_i$; so
 for all $j\le k$,
\begin{equation}\label{eq.TR}
j\in f([T])\Longleftrightarrow j\in f([R^{n_k}_i])
\end{equation}
by $(*)_{n_k}$.
Since  $Q\re n_k=A_i$, it follows  from  $(*)_{n_k}$  that
for all $j\le k$,
\begin{equation}\label{eq.TQR}
j\in f([Q])\Longleftrightarrow j\in f([R^{n_k}_i]).
\end{equation}
Equations (\ref{eq.TR}) and (\ref{eq.TQR}) complete the proof of $(\circledast)$.
By Claim 1, $f$ is monotone basic on $\mathfrak{T}\re\tilde{T}$.
The concludes the proof of the theorem.
\end{proof}

We conclude this section pointing out how $f$ may be basic on $2^{\hat{B}}$ while being only finitely generated on $2^B$.
Given a front $B$ and a set $\mathcal{C}\sse 2^B$,
 let $\hat{\mathcal{C}}$ denote $\{\hat{X}:X\in\mathcal{C}\}$, a subset of $2^{\hat{B}}$.
Letting $C$ denote $\{\hat{X}\re k_m:X\in\mathcal{C}$ and $m<\om\}$,
we point out that
any finitary function $\hat{f}:C\ra 2^{<\om}$
 determines
 functions $f':\mathcal{C}\ra 2^{\om}$
and $f^*:\hat{\mathcal{C}}\ra 2^{\om}$
by setting $ f'(X)=f^*(\hat{X})=\bigcup_{m<\om}\hat{f}(\hat{X}\re k_m)$.
In particular, $f'(X)=f^*(\hat{X})$ for each $X\in\mathcal{C}$.
The following is straightforward to prove.

\begin{prop}\label{prop.basiconBhatgivescts}
Suppose $B$ is a front, $\mathcal{C}$ is a subset of $2^B$, and  $C=\{\hat{X}\re k_m:X\in\mathcal{C},\ m<\om\}$.
If $\hat{f}:C\ra 2^{<\om}$ is a   basic map,
then $f^*$ is continuous on $\hat{\mathcal{C}}$ as a subspace of $2^{\hat{B}}$.
\end{prop}

In particular, given a map $f:\mathcal{U}\ra\mathcal{V}$ in the setting of Theorem \ref {thm.allFubProd_p-point_cts},
the map $f^*:\mathfrak{T}\re\tilde{T}\ra\mathcal{V}$ defined by
$f^*(T)=\bigcup_{m<\om}\hat{f}(T\re k_m)$ is a continuous map on its domain $\mathfrak{T}\re\tilde{T}$.
This map $f^*$ is equivalent to $f$ in the following sense:
For each $T\in\mathfrak{T}\re\tilde{T}$,
$f^*(T)=f([T])$.
In contrast, 
letting $\mathcal{C}=\{[T]:T\in\mathfrak{T}\re\tilde{T}\}$,
the map $f:\mathcal{C}\ra\mathcal{V}$ is not necessarily continuous.
However, $f\re\mathcal{C}$ is still represented by the  monotone finitary map $\hat{f}$; 
given $X\in\mathcal{C}$,
$f(X)=\bigcup_{m<\om}[[\hat{f}(\hat{X}\re k_m)]]$.


\section{Further connections between Tukey,  Rudin-Blass, and   Rudin-Keisler reductions}\label{sec.directapp}

In Lemma 9 of \cite{Raghavan/Todorcevic12},
Raghavan distilled properties of cofinal maps which, when satisfied, yield that Tukey reducibility above a q-point implies Rudin-Blass reducibility.
He then showed that continuous cofinal maps satisfy these properties, thus yielding  Theorem 10 in  \cite{Raghavan/Todorcevic12},
(see Theorem \ref{thm.RT} in Section \ref{sec.intro}).
The proof of the next theorem follows the general structure of Raghavan's proofs.
The key differences are  that we
start with a weaker assumption,
 basic
 maps on Fubini iterates of p-points, and obtain a finite-to-one map on the base tree $\hat{B}$ for the ultrafilter rather than the base $B$ itself.
While the following theorem may be of interest in itself, we will apply it to prove Theorems \ref{thm.RBgivesRK}
and \ref{thm.rkonG_k}
showing
 that for finite Fubini iterates of p-points, and for generic ultrafilters forced by $\mathcal{P}(\om^k)/\Fin^{\otimes k}$, Tukey reducibility above a q-point is equivalent to Rudin-Keisler reducibility.

\begin{thm}\label{thm.RBonBhat}
Suppose $\mathcal{U}$ is  a Fubini iterate of p-points and $\mathcal{V}$ is a q-point.
If
 $\mathcal{V}\le_T\mathcal{U}$,
then there is a finite-to-one function
$\tau:\tilde{T}\ra\om$,  for some $\tilde{T}\in\mathfrak{T}$, such that
$\{\tau[T]:T\in\mathfrak{T}\re\tilde{T}\}
\sse\mathcal{V}$.
\end{thm}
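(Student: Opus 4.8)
The plan is to imitate Raghavan's proof of Theorem~\ref{thm.RT} (Theorem~10 of \cite{Raghavan/Todorcevic12}), using the basic tree representation of Theorem~\ref{thm.allFubProd_p-point_cts} in place of continuity on $2^{\om}$, and producing the finite-to-one function on the base tree $\hat{B}$ rather than on $\om$. First I would use Fact~\ref{fact.precise.connection} to realize $\mathcal{U}$ as the ultrafilter on a flat-top front $B$ generated by the $\vec{\mathcal{U}}$-trees for a sequence $\vec{\mathcal{U}}=(\mathcal{U}_c:c\in\hat{B}^-)$ of p-points, fix a monotone cofinal $f:\mathcal{U}\ra\mathcal{V}$, and apply Theorem~\ref{thm.allFubProd_p-point_cts} to obtain $\tilde{T}_0\in\mathfrak{T}$, a strictly increasing sequence $(n_k)_{k<\om}$, and a basic map $\hat{f}$ generating $f$ on $\mathfrak{T}\re\tilde{T}_0$. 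Since $\mathfrak{T}\re\tilde{T}_0$ is cofinal in $\mathcal{U}$, the family $\{f([T]):T\in\mathfrak{T}\re\tilde{T}_0\}$ is a base for $\mathcal{V}$; because $\hat{f}$ is level- and end-extension preserving, whether $m\in f([T])$ depends only on the truncation $T\re n_{m+1}$, and because $\hat{f}$ is monotone, $\{T\in\mathfrak{T}\re\tilde{T}_0:m\in f([T])\}$ is upward closed under tree inclusion. As a first reduction, since $\mathcal{V}$ is nonprincipal the set $A:=\bigcap\{f([T]):T\in\mathfrak{T}\re\tilde{T}_0\}$ cannot be in $\mathcal{V}$ (lest $\{A\}$ be a base for the nonprincipal ultrafilter $\mathcal{V}\re A$), so after replacing $\mathcal{V}$ by its isomorphic restriction to $\om\setminus A$ I may assume that every $m$ is omitted by some $f([T])$.

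Next, for each $m$ the upward closed clopen set $\{T:m\in f([T])\}$ is cut out, inside $\tilde{T}_0\re n_{m+1}$, by finitely many $\sse$-minimal $\sqsubseteq$-closed ``$m$-witnesses'', none of which is $\{\emptyset\}$ by the last reduction; let $c_m:=\bigcap\{T\re n_{m+1}:T\in\mathfrak{T}\re\tilde{T}_0,\ m\in f([T])\}$, so that $m\in f([T])$ implies $c_m\sse T$. The observation I would isolate is that for every $\sqsubseteq$-closed finite $c\ne\{\emptyset\}$ the family $\{T\in\mathfrak{T}\re\tilde{T}_0:c\not\sse T\}$ is still cofinal in $\mathcal{U}$: picking any node $t\in c$ with $t\ne\emptyset$ and deleting from a given $\vec{\mathcal{U}}$-tree the cone above $t$ (that is, removing $\max t$ from the relevant $\mathcal{U}_{t\setminus\{\max t\}}$) shrinks the branch set and leaves a $\vec{\mathcal{U}}$-tree avoiding $c$. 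Hence $\{f([T]):c\not\sse T\}$ is again a base for $\mathcal{V}$; since $f([T])\cap\{m:c_m=c\}=\emptyset$ whenever $c\not\sse T$, this forces $\{m:c_m=c\}\notin\mathcal{V}$ for every $\sqsubseteq$-closed finite $c\ne\{\emptyset\}$.

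The heart of the proof, and the main obstacle, is to upgrade these facts to a $\tilde{T}\in\mathfrak{T}$ with $\tilde{T}\sse\tilde{T}_0$ and a set $V\in\mathcal{V}$ on which the map $m\mapsto t_m$ is injective, where $t_m$ denotes the $\prec$-largest node of the core of $m$ recomputed inside $\mathfrak{T}\re\tilde{T}$ (so $t_m\ne\emptyset$ and $t_m$ lies in that core). In the spirit of Raghavan's Lemma~9 one first thins $\tilde{T}_0$ by a fusion against the $\vec{\mathcal{U}}$-trees so that each surviving $m$ has a single $m$-witness, hence core $\ne\{\emptyset\}$ in $\mathfrak{T}\re\tilde{T}_1$; the set of surviving $m$ equals $f([\tilde{T}_1])\in\mathcal{V}$, and since $\{m:c_m=c\}\notin\mathcal{V}$ for each $c$ while only finitely many such $c$ lie below a given level, $\max\bigcup c_m$ tends to infinity along $\mathcal{V}$ on this set. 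One then invokes that $\mathcal{V}$ is a q-point, together with a further thinning, to turn this into genuine injectivity of $m\mapsto t_m$ on a $\mathcal{V}$-set. The delicate points here are scheduling the fusion so that killing the spurious witnesses of one $m$ does not destroy a witness already committed to for another, and making the q-point selection while keeping the target set $\mathcal{V}$-large; I expect this to proceed by an induction on the lexicographic rank of the flat-top front, exactly as in Stage~3 of the proof of Theorem~\ref{thm.allFubProd_p-point_cts}: at a successor rank one peels off the top p-point $\mathcal{U}_\emptyset$, uses q-pointness of $\mathcal{V}$ to deal with the coordinates of $\mathcal{V}$ ``controlled by $\mathcal{U}_\emptyset$'', and applies the induction hypothesis to the sub-fronts through each node of level~$1$.

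Granting such $\tilde{T}$ and $V$, I would set $\tau:\tilde{T}\ra\om$ by $\tau(t_m)=m$ for $m\in V$ and $\tau(t)=\max t$ for every other node $t\in\tilde{T}$. This is well defined because $m\mapsto t_m$ is injective on $V$, and it is finite-to-one because each fibre $\tau^{-1}(v)$ is contained in a singleton together with the finite set $\{t\in\hat{B}:\max t=v\}$. Finally, for $T\in\mathfrak{T}\re\tilde{T}$ and $m\in V\cap f([T])$ the core of $m$ is $\sse T$, whence $t_m\in T$ and $m=\tau(t_m)\in\tau[T]$; therefore $\tau[T]\contains V\cap f([T])\in\mathcal{V}$, so $\tau[T]\in\mathcal{V}$, which is exactly the conclusion sought.
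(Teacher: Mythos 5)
Your opening moves coincide with the paper's: realize $\mathcal{U}$ on a flat-top front, apply Theorem \ref{thm.allFubProd_p-point_cts} to get a basic $\hat{f}$ generating $f$ on $\mathfrak{T}\re\tilde{T}_0$, and aim for a finite-to-one $\tau$ on the tree whose images of $\vec{\mathcal{U}}$-trees land in $\mathcal{V}$. But the step you yourself flag as ``the heart of the proof'' --- thinning to a $\tilde{T}$ on which every surviving $m$ has a \emph{single} minimal $m$-witness, and then injectivizing $m\mapsto t_m$ on a $\mathcal{V}$-set --- is a genuine gap, not a routine fusion. The difficulty is not just scheduling: for a fixed node $c$, each of the infinitely many $m$'s may demand deleting some immediate successor of $c$ below level $n_{m+1}$ in order to kill its spurious witnesses, and the union of these deletions need not leave $U_c$ in $\mathcal{U}_c$; some p-point diagonalization would have to be specified, and it is not clear it can be made compatible with keeping $f([\tilde{T}_1])\in\mathcal{V}$ and with the subsequent q-point selection. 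Without that step your $t_m$ is undefined (the core $c_m$ can be $\{\emptyset\}$ when there are incomparable minimal witnesses), so the construction of $\tau$ never gets off the ground.

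The paper sidesteps all of this with a different mechanism, closer to Raghavan's original Lemma 9. It sets $\psi(s)=\{k:\forall T\in\mathfrak{T}\re\tilde{T}\,[s\sse T\ra k\in f([T])]\}$ (finite-valued for finite $s$ after the reduction of Lemma 8 of \cite{Raghavan/Todorcevic12}), builds a fast-growing $g$ with $g(n+1)>\max\{k_{g(n)},\max\psi(\tilde{T}\re k_{g(n)})\}$, and uses the q-point exactly once, to choose $V=\{v_i\}\in\mathcal{V}$ with one point per interval $[g(2i),g(2i+1))$. From this it extracts cut-off points $h(i)$ with the key property that $v_i\in\psi(s)$ forces $s\re[h(i),h(i+1))\ne\emptyset$, and then defines $\tau(a)=v_i$ whenever $\max(a)\in[h(i),h(i+1))$ --- a block-collapse map, finite-to-one because each interval of levels of $\hat{B}$ is finite, rather than a map that is injective on distinguished nodes. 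The verification that $\tau(T)\in\mathcal{V}$ is then a short contradiction: a tree $S\sse T$ with $f([S])\sse(\om\setminus\tau(T))\cap V$ would have some $v_i\in\psi(S\re k_j)$, and property (c) puts a node of $S\re[h(i),h(i+1))\sse T$ into $\tau^{-1}(v_i)$. No cores, no minimal witnesses, and no injectivization are needed. Your closing verification ($\tau[T]\contains V\cap f([T])$) would be fine if the middle existed, but as written the proposal rests on an unproved and genuinely problematic lemma.
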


\begin{proof}
Let $B$ be the  front which is a base for $\mathcal{U}$, and as usual, let $\mathfrak{T}$ denote the set of all $\vec{\mathcal{U}}$-trees on $\hat{B}$.
We begin by establishing some useful notation:
Given $m<n$ and $T\sse[\om]^{<\om}$,
let $T\re [m,n)$ denote  
 the set of all $a\in T$ such that $m\le\max a<n$.

Let $f:\mathcal{U}\ra\mathcal{V}$ be a monotone cofinal map.
Let $\tilde{T}\in\mathfrak{T}$ be given by Theorem \ref{thm.allFubProd_p-point_cts},
so that $f$ is generated on $\mathfrak{T}\re\tilde{T}$ by some monotone basic map
$\hat{f}:\bigcup_{m<\om}2^{\tilde{T}\re k_m}\ra 2^{<\om}$.
Let $\psi:\mathcal{P}(\tilde{T})\ra\mathcal{P}(\om)$ be defined by
\begin{equation}
\psi(A)=\{k\in\om:\forall T\in\mathfrak{T}\re\tilde{T}\, (A\sse T\ra k\in f([T]))\},
\end{equation}
for each $A\sse \tilde{T}$.
Note that $\psi$ is monotone and further that for each $T\in\mathfrak{T}\re \tilde{T}$ and each $m<\om$,
$[[\hat{f}(T\re k_m)]]=\psi(T\re k_m)\cap m$.
By the same argument as in Lemma 8  in \cite{Raghavan/Todorcevic12},
we may assume that for each finite $A\sse\tilde{T}$, $\psi(A)$ is finite.

Notice  that for any  pair $m,j<\om$,
if $j\not\in\psi(\tilde{T}\re m)$, then there is a $T\in\mathfrak{T}\re\tilde{T}$ such that $T\sqsupset\tilde{T}\re m$ and $j\not\in f([T])$; hence, 
 $j\not\in [[\hat{f}(T\re k_j)]]$.
It follows that
for all $S\in\mathfrak{T}\re\tilde{T}$
satisfying $S\sqsupseteq T\re k_j$,
$j\not\in f([S])$  and hence also
$j\not\in \psi(S)$.
Without loss of generality, assume that $T\re[m,k_j)=\emptyset$;
 that is,
 each $a\in T$ has $\max a\not\in[m,k_j)$.
 Now if $A$ is a finite  $\sqsubset$-closed subset of $\tilde{T}$ and $A\re [m,k_j)=\emptyset$,
then there is an  $S\in\mathfrak{T}\re\tilde{T}$ such that
$\tilde{T}\re m\cup A\sse S$,
$S\re[m,k_j)=\emptyset$,
and $S\sqsupset T\re k_j$.
Then $j\not\in f([S])$,
so $j\not\in\psi(\tilde{T}\re m\cup A)$.

Define $g:\om\ra\om$ by $g(0)=0$; and  given $g(n)$, choose $g(n+1)>g(n)$ so that
\begin{equation}\label{eq1}
g(n+1)>\max\{k_{g(n)},\max(\psi(\tilde{T}\re k_{g(n)}))\}.
\end{equation}
Since $\mathcal{V}$ is a q-point, there is a $V_0\in\mathcal{V}$ such that for each $n<\om$,
$|V_0\cap[g(n),g(n+1))|=1$.
We may, without loss of generality, assume that $V_1=\bigcup_{n\in\om}[g(2n),g(2n+1))$ is in $\mathcal{V}$, and let $V=V_0\cap V_1$.
Enumerate $V$ as $\{v_i:i<\om\}$.
Notice that for each $i<\om$,
\begin{equation}\label{eq2}
g(2i)\le v_i<g(2i+1).
\end{equation}
Without loss of generality, assume that $v_0>0$.
Then $v_0\not\in\psi(\emptyset)$, since assuming $\mathcal{V}$ is nonprincipal, $\psi(\emptyset)$ must be empty.

Our construction ensures the following properties:  For all $i<\om$,
\begin{enumerate}
\item[(1)]
$g(i+1)>\max(k_{g(i)},\max(\psi(\tilde{T}\re k_{g(i)})))$;
\item[(2)]
$g(2i)\le v_i<g(2i+1)$;
\item[(3)]
$k_{v_i}<g(2i+2)$;
\item[(4)]
$\psi(\tilde{T}\re k_{v_i})\sse g(2i+2)$.
\end{enumerate}

We will now define  a strictly increasing function $h:\om\ra\om$ so that for each $i<\om$, the following hold:
\begin{enumerate}
\item[(a)]
$h(i)<k_{v_i}$;
\item[(b)]
$v_i\not\in\psi(\tilde{T}\re h(i))$;
\item[(c)]
For each finite, $\sqsubset$-closed set $A\sse\tilde{T}$,  if $v_i\in \psi(A)$  then $A\re[h(i),h(i+1))\ne\emptyset$.
\end{enumerate}

Define $h(0)=0$.
Then (a) - (c) trivially hold.
Suppose $h(i)$ has been defined so that (a) - (c) hold.
Define $h(i+1)=k_{v_i}$.
Then
$h(i+1)>h(i)$, since $k_{v_i}>h(i)$ by (a) of the induction hypothesis.
(a) holds, since $k_{v_i}<k_{v_{i+1}}$.
To see that (b) holds, note that
\begin{equation}
\psi(\tilde{T}\re h(i+1))=\psi(\tilde{T}\re k_{v_i})
\sse \psi(\tilde{T}\re k_{g(2i+1)})
\sse g(2i+2)
\le v_{i+1},
\end{equation}
where the inclusions hold by (3) and (1), and the inequality holds by (2).
Thus, $v_{i+1}\not\in\psi(\tilde{T}\re h(i+1))$.

To check (c), fix a finite $\sqsubset$-closed set $A\sse\tilde{T}$  such that $A\re [h(i),h(i+1))=\emptyset$; that is, for all $a\in A$, $\max a\not \in [h(i),h(i+1))$.
Let $A'=A\setminus (\tilde{T}\re h(i+1))$.
We claim that $v_i\not\in\psi(\tilde{T}\re h(i)\cup A')$.
By (b), $v_i\not\in\psi(\tilde{T}\re h(i))$.  Therefore,  $v_i\not\in [[\hat{f}(\tilde{T}\re h(i))]]$.
Now $(\tilde{T}\re h(i)\cup A')\re h(i+1)=\tilde{T}\re h(i)$, since $A'\re [h(i),h(i+1))=\emptyset$.
So $v_i\not\in [[\hat{f}((\tilde{T}\re h(i)\cup A')\re h(i+1))]]$.
Therefore, for each $S\in\mathfrak{T}\re\tilde{T}$ such that $S\re h(i+1)=(\tilde{T}\re h(i)\cup A')\re h(i+1)$, which we point out is the same as $\tilde{T}\re h(i)$,
we have $v_i\not\in f([S])$.
Thus, if $S\sqsupseteq \tilde{T}\re h(i)\cup A'$ and satisfies $S\re[h(i),h(i+1))=\emptyset$,
then $v_i\not\in f([S])$,
since this gets decided by height $k_{v_i}$ which equals $h(i+1)$.
Therefore, $v_i\not\in \psi(\tilde{T}\re h(i)\cup A')$, which proves (c).

Now we define a function $\tau:\tilde{T}\ra\om$ as follows: For $i<\om$ and $a\in\tilde{T}$, if $\max a\in [h(i),h(i+1))$, then
$\tau(a)=v_i$.

\begin{claim}
For each $T\in\mathfrak{T}\re\tilde{T}$, $\tau[T]\in\mathcal{V}$.
\end{claim}

\begin{proof}
Suppose not.  Then there is a $T\in\mathfrak{T}\re\tilde{T}$ such that $\tau[T]\not\in\mathcal{V}$; so $\tau[T]\in\mathcal{V}^*$.
Then there is an $S\in \mathfrak{T}\re T$ such that $f([S])\sse(\om\setminus \tau[T])\cap V\in\mathcal{V}$.
Let $j$ be least such that $\hat{f}(\chi_S\re k_j)\ne\emptyset$ and let $s=S\re k_j$.
Then $\emptyset\ne [[\hat{f}(\chi_S\re k_j)]]\sse\psi(s)$.
Fix some $v_i\in\psi(s)$.
Then $v_i\not\in \tau[T]$ since $v_i\in\psi(s)\sse f([S])\sse \om\setminus \tau[T]$.
However, $v_i\in\psi(s)$ implies $s\re [h(i),h(i+1))\ne\emptyset$, by (c).
For each $a\in s\re [h(i),h(i+1))$, $\tau(a)=v_i$.
Since $s\sse T$, we have $v_i\in\tau[T]$, a contradiction.
Thus, $\tau[T]\in\mathcal{V}$.
\end{proof}

Therefore, $\tau$ is a finite-to-one map from $\tilde{T}$ into $\om$,
 and the set of $\tau$-images of members of  $\mathfrak{T}\re\tilde{T}$ generate a filter contained inside $\mathcal{V}$.
\end{proof}

The previous theorem does not necessarily imply that the $\tau$-image of $\mathfrak{T}\re\tilde{T}$ generates $\mathcal{V}$.
However, under certain conditions, it does.
In the case that $\mathfrak{T}\re\tilde{T}$ generates an ultrafilter on base set the {\em tree} $\tilde{T}$, as is the case when all the p-points are the same selective ultrafilter, then the $\tau$-image of $\mathcal{U}$ {\em is} $\mathcal{V}$.
The following is a Rudin-Blass analogue, but on $\hat{B}$ instead of $B$.

\begin{cor}\label{cor.fubpowerselective}
If $\mathcal{U}$ is a Fubini power of some selective ultrafilter $\mathcal{V}$, where the base set for $\mathcal{U}$ is the front $B$,
then there is a finite-to-one map $\tau:\hat{B}\ra\om$ such that
$\{\tau[T]:T\in\mathfrak{T}\}$ generates $\mathcal{V}$.
\end{cor}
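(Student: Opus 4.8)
The plan is to check that the most naive map does the job: define $\tau:B\ra\om$ by $\tau(b)=\max(b)$, and for $T\in\mathfrak{T}$ read $\tau(T)$ as $\{\tau(b):b\in[T]\}=\{\max(b):b\in[T]\}$. (One could instead run Theorem \ref{thm.RBonBhat} with $\mathcal{V}$ in the role of the q-point below $\mathcal{U}$ — legitimate since $\mathcal{V}\le_{RK}\mathcal{U}$, because $\mathcal{U}$ is a Fubini power of $\mathcal{V}$ — and then cut the resulting finite-to-one map on $\tilde{T}$ down to $[\tilde{T}]$, but that map is, up to its finite-to-one reindexing $l\mapsto v_i$, just $\max\re[\tilde{T}]$, so the two routes agree; I will carry out the direct one.) By Fact \ref{fact.precise.connection} and its proof, realize $\mathcal{U}$ as the ultrafilter on a flat-top front $B$ generated by $\vec{\mathcal{U}}$-trees with $\mathcal{U}_c=\mathcal{V}$ for every $c\in\hat{B}^-$. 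The map $\tau$ is finite-to-one: by the remark following Definition \ref{defn.lo.on.front}, $\{c\in[\om]^{<\om}:\max(c)=k\}$ is a finite $\prec$-interval for each $k<\om$, so $\tau^{-1}(\{k\})$ is finite.

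Next, the easy structural facts about ``trees over sets''. An easy induction on $\rank(B)$, using Definition \ref{def.flattopfront} and the recursion $B=\bigcup_{n}\{\{n\}\cup b:b\in B_{\{n\}}\}$, shows that for each $c\in\hat{B}^-$ one has $\{l>\max(c):c\cup\{l\}\in\hat{B}\}=\om\setminus(\max(c)+1)$. Hence for any $Y\in\mathcal{V}$ the ``tree over $Y$''
\[
T^Y:=\{a\in\hat{B}:a\sse Y\}
\]
is a $\vec{\mathcal{U}}$-tree, with $U_c(T^Y)=Y\setminus(\max(c)+1)\in\mathcal{V}$, and $[T^Y]=\{b\in B:b\sse Y\}$. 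A second induction on $\rank(B)$ along the same recursion shows that $\tau(T^Y)=\{\max(b):b\in B,\ b\sse Y\}$ is cofinite in $Y$, hence lies in $\mathcal{V}$.

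The one substantial point — the step I expect to be the main obstacle — is the following lemma: \emph{every $\vec{\mathcal{U}}$-tree $T$ contains some $T^Y$ with $Y\in\mathcal{V}$}. This is the only place where the selectivity of $\mathcal{V}$, rather than merely its being an ultrafilter, is used; it amounts to saying that a selective ultrafilter is Ramsey on every flat-top front, and I would prove it by induction on $\rank(B)$. The base case $B=[\om]^1$ is immediate with $Y=U_\emptyset(T)$. For the inductive step, write $B=\bigcup_{n}\{\{n\}\cup b:b\in B_{\{n\}}\}$ with each $B_{\{n\}}$ a flat-top front of smaller rank on $\om\setminus(n+1)$; for $n\in U_\emptyset(T)$ the subtree of $T$ above $\{n\}$ is, after the obvious shift, a $\vec{\mathcal{U}}$-tree on $B_{\{n\}}$, so the induction hypothesis provides $Y_n\in\mathcal{V}$ whose tree over $Y_n$ lies inside it; the selectivity of $\mathcal{V}$ then yields $Y\in\mathcal{V}$ with $Y\sse U_\emptyset(T)$ and $Y\setminus(n+1)\sse Y_n$ for all $n\in Y$, and one checks node by node that $T^Y\sse T$.

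Granting the lemma, the corollary falls out. If $T\in\mathfrak{T}$, pick $Y\in\mathcal{V}$ with $T^Y\sse T$; then $\tau(T)\contains\tau(T^Y)\in\mathcal{V}$, so $\tau(T)\in\mathcal{V}$. If $V\in\mathcal{V}$, then $T^V\in\mathfrak{T}$ and $\tau(T^V)\sse V$. Finally, for $T_0,T_1\in\mathfrak{T}$ we have $T_0\cap T_1\in\mathfrak{T}$ and $\tau(T_0\cap T_1)\sse\tau(T_0)\cap\tau(T_1)$. Thus $\{\tau(T):T\in\mathfrak{T}\}$ is a filter base all of whose members lie in $\mathcal{V}$ and which is cofinal in $\mathcal{V}$; that is, it generates $\mathcal{V}$, and equivalently $\tau$ pushes the ultrafilter $\mathcal{U}$ on $B$ forward onto $\mathcal{V}$.
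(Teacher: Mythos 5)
Your proof is correct, and it takes a genuinely different route from the paper's. The paper gives no separate argument for this corollary: it is read off from Theorem \ref{thm.RBonBhat} (applied with the selective $\mathcal{V}$ as the q-point, using $\mathcal{V}\le_T\mathcal{U}$), together with the observation in the preceding paragraph that when all the $\mathcal{U}_c$ are the same selective ultrafilter, $\mathfrak{T}\re\tilde{T}$ generates an ultrafilter on the base set $\tilde{T}$, so the finite-to-one $\tau$-image is an ultrafilter contained in $\mathcal{V}$ and hence equal to $\mathcal{V}$. You bypass Theorems \ref{thm.allFubProd_p-point_cts} and \ref{thm.RBonBhat} entirely, exhibit $\tau=\max$ explicitly, and reduce everything to the lemma that the cubes $T^Y$, $Y\in\mathcal{V}$, are cofinal in $\mathfrak{T}$. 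That lemma is precisely the combinatorial content that the paper's phrase ``as is the case when all the p-points are the same selective ultrafilter'' invokes without proof, and your induction on the rank of the flat-top front, via the standard diagonalization characterization of selectivity, is the right way to establish it; the two auxiliary inductions (that $T^Y$ is a $\vec{\mathcal{U}}$-tree and that $\tau(T^Y)$ is cofinite in $Y$) are routine and go through as sketched. What the paper's route buys is a one-line derivation from machinery already built, presented uniformly with Theorem \ref{thm.RBgivesRK}; what yours buys is an elementary, self-contained argument with an explicit $\tau$ defined on all of $B$ (the theorem only produces a map on a subtree $\tilde{T}$), and it makes visible exactly where selectivity, as opposed to the p-point property, is used. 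One simplification: the fullness of $\hat{B}$ (that $c\cup\{l\}\in\hat{B}$ for every $c\in\hat{B}^-$ and $l>\max(c)$) needs no induction and no flat-top hypothesis --- apply Definition \ref{def.front}(2) to the infinite set $c\cup(\om\setminus l)$ and use that $B$ is a $\sqsubset$-antichain with $c\notin B$.
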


It is useful to point out the connection and contrast  between this corollary and the following previously known results.
Every Fubini power of some selective ultrafilter is Tukey equivalent to that selective ultrafilter (Corollary 37   in \cite{Dobrinen/Todorcevic11}).
Thus, if $\mathcal{U}$ is a Fubini iterate of one selective ultrafilter, then
 the q-point $\mathcal{V}$ in Theorem \ref{thm.RBonBhat} must be that  selective ultrafilter.
On the other hand,
 the only ultrafilters Tukey reducible to a selective ultrafilter are those ultrafilters isomorphic to some Fubini power of the selective ultrafilter (Theorem 24 of Todorcevic in \cite{Raghavan/Todorcevic12}).
Thus, the collection of ultrafilters Tukey equivalent to a given selective ultrafilter is exactly the collection of Fubini powers of that selective ultrafilter;
and any Fubini power of a selective ultrafilter 
is trivially Rudin-Keisler above 
 that same selective ultrafilter.

In
Corollary 56 of  \cite{Raghavan/Todorcevic12},
Raghavan showed that if $\mathcal{U}$ is some Fubini iterate of p-points and $\mathcal{V}$ is selective, then $\mathcal{V}\le_T\mathcal{U}$ implies $\mathcal{V}\le_{RK}\mathcal{U}$.
We now generalize this to q-points, though at the cost of  assuming $\mathcal{U}$ is only a finite Fubini iterate of p-points.

\begin{thm}\label{thm.RBgivesRK}
Suppose $\mathcal{U}$ is a finite iterate of Fubini products of p-points.
If $\mathcal{V}$ is a q-point and
 $\mathcal{V}\le_T\mathcal{U}$, then
$\mathcal{V}\le_{RK}\mathcal{U}$.
\end{thm}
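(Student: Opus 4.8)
The plan is to feed the conclusion of Theorem~\ref{thm.RBonBhat} into a level‑by‑level dichotomy that exploits the finiteness of the iterate. By Fact~\ref{fact.precise.connection} we may assume $\mathcal{U}$ is the ultrafilter on a flat‑top front $B$ of finite rank generated by the $\vec{\mathcal{U}}$‑trees for a sequence of p‑points $\vec{\mathcal{U}}=(\mathcal{U}_c:c\in\hat{B}^-)$; fix $k$ with $\hat{B}\sse[\om]^{\le k}$. Apply Theorem~\ref{thm.RBonBhat} to the given monotone cofinal $f\colon\mathcal{U}\to\mathcal{V}$ to obtain a tree $\tilde{T}\in\mathfrak{T}$ and a map $\tau\colon\tilde{T}\to\om$ with $\tau(T)\in\mathcal{V}$ for every $T\in\mathfrak{T}\re\tilde{T}$ (we shall not need finite‑to‑oneness). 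By thinning $\tilde{T}$ inside $\mathfrak{T}$ we may assume every branch of $[\tilde{T}]$ has one fixed length $d\le k$: for each relevant node split $U_c(\tilde{T})$ according to the common length of the branches lying above $c\cup\{n\}$ (finitely many cases), keep the piece in $\mathcal{U}_c$, and recurse; $\tau$ still has the required property on the smaller tree. So every $T\in\mathfrak{T}\re\tilde{T}$ has nodes only at levels $0,1,\dots,d$.

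For a tree $T\sse\hat{B}$ and $1\le j\le d$ put $W_j(T)=\{\tau(a):a\in T,\ |a|=j\}$, so that $\tau(T)$ differs from $\bigcup_{j=1}^{d}W_j(T)$ by at most the single value $\tau(\emptyset)$; since $\mathcal{V}$ is nonprincipal, $\tau(T)\in\mathcal{V}$ forces $W_j(T)\in\mathcal{V}$ for some $j$. Two properties of $W_j$ drive the argument: it is $\sse$‑monotone in $T$, and $W_j(T)$ depends only on $T\cap[\om]^{\le j}$. Also, for fixed $j$ define $\sigma_j\colon[\tilde{T}]\to\om$ by letting $\sigma_j(b)=\tau(a)$ where $a$ is the set of the $j$ least elements of $b$; then $\sigma_j([T])=W_j(T)$ for every $T\in\mathfrak{T}\re\tilde{T}$. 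Hence it suffices to produce one $j$ and one $\tilde{T}^{\ast}\in\mathfrak{T}$ with $\tilde{T}^{\ast}\sse\tilde{T}$ and $W_j(T)\in\mathcal{V}$ for all $T\in\mathfrak{T}\re\tilde{T}^{\ast}$: then the push‑forward ultrafilter $\sigma_j(\mathcal{U}\re[\tilde{T}^{\ast}])$, which is generated by the sets $W_j(T)$, is contained in and therefore equal to $\mathcal{V}$, and since $[\tilde{T}^{\ast}]\in\mathcal{U}$ this gives $\mathcal{V}\le_{RK}\mathcal{U}$.

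The plan is now to run a dichotomy for $j=1,2,\dots,d-1$, building a $\sse$‑decreasing sequence $\tilde{T}=\tilde{T}^{(0)}\contains\tilde{T}^{(1)}\contains\cdots$ of trees in $\mathfrak{T}$. At stage $j$ ask whether there is a tree $S_0$ with $\emptyset\in S_0\sse\tilde{T}^{(j-1)}\cap[\om]^{\le j}$, $U_c(S_0)\in\mathcal{U}_c$ for all $c\in S_0$ with $|c|<j$, and $\{\tau(a):a\in S_0,\ |a|=j\}\notin\mathcal{V}$. If yes, let $\tilde{T}^{(j)}$ be obtained by putting $\tilde{T}^{(j)}\cap[\om]^{\le j}=S_0$ and reattaching, above each level‑$j$ node of $S_0$, the corresponding subtree of $\tilde{T}^{(j-1)}$; then $\tilde{T}^{(j)}\in\mathfrak{T}$, $\tilde{T}^{(j)}\sse\tilde{T}^{(j-1)}$, and by monotonicity and the level‑dependence of $W_j$ we get $W_j(T)\notin\mathcal{V}$ for every $T\in\mathfrak{T}\re\tilde{T}^{(j)}$, while the statements already secured at stages $<j$ survive since we only shrank. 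If no, then $\{\tau(a):a\in S,\ |a|=j\}\in\mathcal{V}$ for every such $S$ below $\tilde{T}^{(j-1)}\cap[\om]^{\le j}$; as every such $S$ equals $T\cap[\om]^{\le j}$ for some $T\in\mathfrak{T}\re\tilde{T}^{(j-1)}$ and conversely, we get $W_j(T)\in\mathcal{V}$ for all $T\in\mathfrak{T}\re\tilde{T}^{(j-1)}$, and we conclude via the previous paragraph with $\tilde{T}^{\ast}=\tilde{T}^{(j-1)}$.

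If the ``yes'' branch is taken at every stage $j<d$, then $\tilde{T}^{(d-1)}$ satisfies $W_j(T)\notin\mathcal{V}$ for all $T\in\mathfrak{T}\re\tilde{T}^{(d-1)}$ and all $j<d$; since $\tau(T)\in\mathcal{V}$ and $\mathcal{V}$ is an ultrafilter, this forces $W_d(T)\in\mathcal{V}$ for all such $T$, and we conclude with $j=d$, $\tilde{T}^{\ast}=\tilde{T}^{(d-1)}$. The crux—and the main obstacle—is precisely this passage from the map $\tau$ on the \emph{tree} $\hat{B}$ produced by Theorem~\ref{thm.RBonBhat}, whose images of trees lie in $\mathcal{V}$, to a genuine Rudin--Keisler map on the \emph{front} $B$ whose push‑forward \emph{equals} $\mathcal{V}$; the dichotomy resolves it, and this is the only place where the hypothesis that $\mathcal{U}$ is a \emph{finite} iterate is used, since the pigeonhole step needs $\tau(T)=\bigcup_{j=1}^{d}W_j(T)$ (up to one point) to be a finite union.
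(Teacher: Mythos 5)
Your proposal is correct and follows essentially the same route as the paper: apply Theorem \ref{thm.RBonBhat}, use the finite pigeonhole on the levels $1,\dots,k$ of the tree to find one level $j$ whose $\tau$-image is in $\mathcal{V}$ for all trees below some fixed $\vec{\mathcal{U}}$-tree, and then take $\sigma_j=\tau\circ\pi_j$ as the Rudin--Keisler map, noting that its push-forward is an ultrafilter contained in, hence equal to, $\mathcal{V}$. The only difference is cosmetic: the paper stabilizes the whole set $L(T)=\{l:\tau(T\cap[\om]^l)\in\mathcal{V}\}$ at once (using its monotonicity and the finiteness of $\mathcal{P}(\{1,\dots,k\})$) and takes $l=\max L(T)$, whereas you achieve the same stabilization by a level-by-level dichotomy.
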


\begin{proof}
Let $k$ denote the length of the Fubini iteration, so $[\om]^k$ is the  front which is a  base for $\mathcal{U}$.
Let $\tau$ be the finite-to-one map from Theorem \ref{thm.RBonBhat}, and without loss of generality, assume $\tau$ is defined on all of $[\om]^{\le k}$.
For each $T\in\mathfrak{T}$, we notice that $\bigcup_{1\le l\le k}\tau[T\cap [\om]^l]=\tau[T]\in\mathcal{V}$.
For each $T\in\mathfrak{T}$, let $L(T)=\{1\le l\le k:\tau[T\cap[\om]^l]\in\mathcal{V}\}$.
Then there is a $T\in\mathfrak{T}$ such that for all $S\in\mathfrak{T}\re T$, $L(S)=L(T)$.
Let $l=\max(L(T))$.

Now $\{S\cap [\om]^l:S\in\mathfrak{T}\re T\}$ generates an ultrafilter on base set $[\om]^l\cap T$;
further, for each $S\in\mathfrak{T}\re T$, $\tau[S\cap [\om]^l]$ is a member of $\mathcal{V}$  (since $l\in L(S)$).
Thus, $\{\tau[S\cap[\om]^l]:S\in\mathfrak{T}\re T\}$ generates an ultrafilter, and each of these $\tau$-images is in $\mathcal{V}$.
It follows that  $\{\tau[S\cap[\om]^l]:S\in\mathfrak{T}\re T\}$  generates $\mathcal{V}$.
If $l=k$, we are done, and in fact we have a Rudin-Blass map from $\mathcal{U}$ to $\mathcal{V}$.
If $l<k$, then define $\sigma:[\om]^k\ra \om$ by
$\sigma(a)=\tau(\pi_l(a))$.
Then $\sigma$ is a Rudin-Keisler map from $\mathcal{U}$ into $\mathcal{V}$.
\end{proof}

We point out that the {\em basic} maps on the generic ultrafilters $\mathcal{G}_k$ forced by $\mathcal{P}(\om^k)/\Fin^{\otimes k}$, $2\le k<\om$, in \cite{DobrinenJSL15}
have exactly the same properties as basic maps on $[\om]^k$  in this paper.
(See Definition 37 and Theorem 38 in \cite{DobrinenJSL15}.)
Hence, Theorem \ref{thm.RBonBhat} also applies to  these ultrafilters.
Since for each $1\le  l\le k$, the projection of $\mathcal{G}_k$ to $[\om]^l$ yields the generic ultrafilter $\mathcal{G}_l$,
 the same proof as in  Theorem \ref{thm.RBgivesRK}
yields the following theorem.

\begin{thm}\label{thm.rkonG_k}
Suppose $\mathcal{G}_k$ is a generic ultrafilter forced by $\mathcal{P}(\om^k)/\Fin^{\otimes k}$, for any $2\le k<\om$.
If $\mathcal{V}$ is a q-point and
 $\mathcal{V}\le_T\mathcal{G}_k$, then
$\mathcal{V}\le_{RK}\mathcal{G}_k$.
\end{thm}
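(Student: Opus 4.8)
The plan is to repeat the proof of Theorem \ref{thm.RBgivesRK} essentially verbatim, with the finite Fubini iterate on $[\om]^k$ replaced by $\mathcal{G}_k$ and the tree $\hat{B}$ replaced by the base-tree representation of $\mathcal{G}_k$ from \cite{DobrinenJSL15}. Two facts make this substitution legitimate. First, by Definition 37 and Theorem 38 of \cite{DobrinenJSL15}, every monotone cofinal map from $\mathcal{G}_k$ to an ultrafilter $\mathcal{V}$ is generated, on the conditions below some $\tilde{T}$, by a basic map with exactly the level preserving, end-extension preserving and monotonicity properties appearing in Definition \ref{defn.basicTredtree}; thus the role played by Theorem \ref{thm.allFubProd_p-point_cts} in the proof of Theorem \ref{thm.RBonBhat} is here played by Theorem 38 of \cite{DobrinenJSL15}. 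Second, for each $1\le l\le k$ the coordinate projection $\pi_l$ sends $\mathcal{G}_k$ to the generic ultrafilter $\mathcal{G}_l$, which is standard for the iterated Fubini forcings $\mathcal{P}(\om^k)/\Fin^{\otimes k}$.

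First I would establish the exact analogue of Theorem \ref{thm.RBonBhat} for $\mathcal{G}_k$. Given a q-point $\mathcal{V}$ with $\mathcal{V}\le_T\mathcal{G}_k$ and a monotone cofinal map $f:\mathcal{G}_k\ra\mathcal{V}$, fix $\tilde{T}$ and the basic map $\hat{f}$ generating $f$ on $\mathfrak{T}\re\tilde{T}$ supplied by the first fact; set $\psi(s)=\{j:\forall T\in\mathfrak{T}\re\tilde{T}\,(s\sse T\ra j\in f([T]))\}$; note that $\psi$ is monotone with finite values on finite $\sqsubset$-closed sets (as in Lemma 8 of \cite{Raghavan/Todorcevic12}); and carry out the interleaving construction of the cut-offs $g(n)$. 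Using that the q-point $\mathcal{V}$ meets each interval $[g(n),g(n+1))$ in at most one point, one produces $V=V_0\cap V_1\in\mathcal{V}$ and a strictly increasing $h:\om\ra\om$ satisfying conditions (a)--(c) of the proof of Theorem \ref{thm.RBonBhat}, and hence a finite-to-one $\tau:\tilde{T}\ra\om$ with $\{\tau(T):T\in\mathfrak{T}\re\tilde{T}\}\sse\mathcal{V}$. Nothing in that argument used the base being the specific flat-top front $[\om]^k$; only the structural properties of basic maps enter, so the proof transfers without change.

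Then I would repeat the proof of Theorem \ref{thm.RBgivesRK}. Extend $\tau$ to the whole base of $\mathcal{G}_k$, and for each condition $T$ put $L(T)=\{1\le l\le k:\tau(T\cap\om^l)\in\mathcal{V}\}$, using the level decomposition of the base; by directedness there is a $T$ on which $L$ is constant for all conditions below it, and we set $l=\max(L(T))$. The level-$l$ restrictions $\{T\cap\om^l:T\in\mathfrak{T}\re\tilde{T}\}$ form a base for $\mathcal{G}_l$ by the second fact, and all their $\tau$-images lie in $\mathcal{V}$, so $\{\tau(T\cap\om^l):T\in\mathfrak{T}\re\tilde{T}\}$ generates $\mathcal{V}$. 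If $l=k$ this already gives a Rudin-Blass, hence Rudin-Keisler, map from $\mathcal{G}_k$ to $\mathcal{V}$; if $l<k$, then $\sigma(a)=\tau(\pi_l(a))$ is a Rudin-Keisler map witnessing $\mathcal{V}\le_{RK}\mathcal{G}_k$.

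The main obstacle is verification rather than new mathematics: one must check that the notion of basic map for $\mathcal{G}_k$ in \cite{DobrinenJSL15} satisfies, clause by clause, the hypotheses used in the proofs of Theorems \ref{thm.allFubProd_p-point_cts}, \ref{thm.RBonBhat} and \ref{thm.RBgivesRK} --- level and end-extension preservation, monotonicity, finiteness of the induced $\psi$-values, and the existence of a level decomposition whose coordinate projections are the $\mathcal{G}_l$. Once this dictionary is in place the three proofs transfer line for line, the q-point hypothesis entering, exactly as in Theorem \ref{thm.RBgivesRK}, only through the one-point-per-interval property of q-points.
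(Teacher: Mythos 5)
Your proposal is correct and follows exactly the paper's route: the paper likewise invokes Definition 37 and Theorem 38 of the cited work on $\mathcal{G}_k$ to note that its basic maps have the same properties as basic maps on $[\om]^k$, so that Theorem \ref{thm.RBonBhat} applies, and then uses the projections of $\mathcal{G}_k$ to $\mathcal{G}_l$ to rerun the proof of Theorem \ref{thm.RBgivesRK} verbatim. No further comment is needed.
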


\begin{rem}
We cannot in general weaken the requirement of q-point to rapid in Theorem \ref{thm.RBonBhat}.
 In \cite{Dobrinen/Todorcevic15}, it is shown that there are Tukey equivalent rapid p-points, and hence a Fubini iterate of such p-points Tukey equivalent to a rapid p-point,  which are Rudin-Keisler incomparable.
\end{rem}


\section{Ultrafilters Tukey reducible to Fubini iterates of p-points have
finitely generated Tukey reductions}\label{sec.finitegen}

In this section, we prove the analogue of Theorem \ref{thm.main}
for  the class of all ultrafilters which are Tukey reducible to some Fubini iterate of p-points.
Namely, in Theorem \ref{thm.FinitelyRepTukeyRed},
we prove that
every ultrafilter Tukey reducible to some Fubini iterate of p-points has finitely generated Tukey reductions (Definition \ref{defn.finitaryTred}).
This sharpens  a result of Raghavan (Lemma 16 in \cite{Raghavan/Todorcevic12}) by obtaining finitary maps which generate the original cofinal map on some filter base rather than some possibly different cofinal map.
Also, the class on which we obtain finitely generated Tukey reductions is closed under Tukey reduction, whereas the class where his result applies (basically generated ultrafilters) is not known to be closed under Tukey reduction.
 Theorem \ref{thm.FinitelyRepTukeyRed} allows us to extend Theorem 17 of Raghavan in  \cite{Raghavan/Todorcevic12}
 relating Tukey reduction to Rudin-Keisler reduction for basically generated ultrafilters to the class of all ultrafilters Tukey reducible to some Fubini iterate of p-points (see  Theorem \ref{thm.rextension} and the discussion preceding it).

The next lemma is the analogue of Lemma \ref{thm.PropACtsMaps} for the space $2^{\hat{B}}$ in place of $2^{\om}$.
As the proof is almost verbatim  by making the obvious changes, we omit it.

\begin{lem}[Extension lemma for  fronts]\label{lem.ExtCtsMaps}
Suppose $\mathcal{U}$ is a nonprincipal ultrafilter with  base set a  front $B$.
Suppose
$f:\mathcal{U}\ra\mathcal{V}$
is a  monotone cofinal map, and
there is a cofinal subset $\mathcal{C}\sse\mathcal{U}$  such that
  $f\re\mathcal{C}$ is 
  represented by a 
  monotone  basic function, in the sense of Definition \ref{defn.basicTredtree}.
Then there is a continuous, monotone $\tilde{f}:2^{\hat{B}}\ra 2^{\om}$
such that
\begin{enumerate}
\item
 $\tilde{f}$
is  represented by a monotone basic
 map
$\hat{f}:\bigcup_{m<\om}2^{\hat{B}\re k_m}\ra 2^{<\om}$, in the sense of Definition \ref{defn.basicTredtree}.
\end{enumerate}
Moreover,
defining the function $f':\mathcal{U}\ra 2^{\om}$ by
$f'(U)=\tilde{f}(\hat{U})$, for $U\in\mathcal{U}$,
\begin{enumerate}
 \item[(2)]
$f'\re\mathcal{C}=f\re\mathcal{C}$; and
\item[(3)]
$f'\re\mathcal{U}$ is a cofinal map from $\mathcal{U}$ to $\mathcal{V}$.
\end{enumerate}
\end{lem}

\begin{defin}[Finitely generated Tukey reduction]\label{defn.finitaryTred}
We say that an ultrafilter $\mathcal{V}$ on  base set  $\om$ has {\em finitely generated Tukey reductions} if
for each monotone cofinal map $f:\mathcal{V}\ra\mathcal{W}$,
there is a cofinal subset $\mathcal{C}\sse\mathcal{V}$, a strictly increasing sequence $(k_m)_{m<\om}$, and a function
$\hat{f}:C\ra 2^{<\om}$, where $C=\{X\re k_m:X\in\mathcal{C},\ m<\om\}$, such that
\begin{enumerate}
\item
$\hat{f}$ is level  preserving:
For each $m<\om$ and $s\in C$, $|s|=k_m$ implies $|\hat{f}(s)|=m$;
\item
$\hat{f}$ {\em  generates} $f$ on $\mathcal{C}$: For each $X\in\mathcal{C}$,
\end{enumerate}
\begin{equation}
f(X)=\bigcup_{m<\om}[[\hat{f}(X\re k_m)]].
\end{equation}
\end{defin}

The difference between a basic Tukey reduction and a finitely generated Tukey reduction is that the map $\hat{f}$ in Definition \ref{defn.finitaryTred} is not required to be end-extension preserving.

Now we prove the main theorem of this section.
This is the analogue of  Theorem \ref{thm.PropB} (which holds for ultrafilters Tukey reducible to some p-point)
to the setting of all ultrafilters Tukey reducible to some  Fubini iterate of p-points.

\begin{thm}\label{thm.FinitelyRepTukeyRed}
Let $\mathcal{U}$ be any Fubini iterate of  p-points.
If $\mathcal{V}\le_T\mathcal{U}$, then
$\mathcal{V}$  has finitely generated Tukey reductions.
\end{thm}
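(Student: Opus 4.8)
The plan is to mimic the proof of Theorem~\ref{thm.PropB}, but now running it through the tree-space setting supplied by Theorem~\ref{thm.allFubProd_p-point_cts} and the Extension Lemma~\ref{lem.ExtCtsMaps}. First I would fix a monotone cofinal map $h:\mathcal{V}\ra\mathcal{W}$ witnessing $\mathcal{W}\le_T\mathcal{V}$ and a monotone cofinal map $f:\mathcal{U}\ra\mathcal{V}$ witnessing $\mathcal{V}\le_T\mathcal{U}$. Write $\mathcal{U}$ as an ultrafilter on a flat-top front $B$ generated by $\vec{\mathcal{U}}$-trees $\mathfrak{T}$, where all $\mathcal{U}_c$ are p-points. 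By Theorem~\ref{thm.allFubProd_p-point_cts} there is $\tilde{T}\in\mathfrak{T}$ and a basic map $\hat{f}$ on $\bigcup_{m}2^{\tilde{T}\re k_m}$ generating $f$ on $\mathfrak{T}\re\tilde{T}$; passing to the continuous map $f^*:\mathfrak{T}\re\tilde{T}\ra 2^{\om}$ (as in Fact~\ref{fact.basiconBhatgivescts}) gives us a \emph{continuous} cofinal map on a compact subspace of $2^{\hat{B}}$, which is exactly what the argument of Theorem~\ref{thm.PropB} needs. Replace $\mathcal{C}$ there by $\mathfrak{T}\re\tilde{T}$ (which plays the role of both a cofinal subset and its own closure, since it is already compact), $\tilde{f}$ by $f^*$, and $2^{\om}$ (domain side) by $2^{\hat{B}}$.

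Next I would compose: extend $h$ to $\tilde{h}:2^{\om}\ra 2^{\om}$ by $\tilde{h}(X)=\bigcap\{h(V):V\in\mathcal{V},\ V\contains X\}$, which is monotone with $\tilde{h}\re\mathcal{V}=h$, and set $\tilde{g}=\tilde{h}\circ f^*$ on $\mathfrak{T}\re\tilde{T}$; then $g:=\tilde{g}\re\mathfrak{T}\re\tilde{T}$ equals $h\circ f$ restricted there, hence is a monotone cofinal map from (a cofinal subset of) $\mathcal{U}$ to $\mathcal{W}$. Applying Theorem~\ref{thm.allFubProd_p-point_cts} again to $g$ (after noting $g$ is the restriction of a monotone cofinal map on all of $\mathcal{U}$) yields a tree $\tilde{S}\in\mathfrak{T}$ and a basic map $\hat{g}$ generating $g$ on $\mathfrak{T}\re\tilde{S}$. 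Without loss of generality intersect so that $\tilde{S}\sse\tilde{T}$, and reindex so $\hat{f}$ and $\hat{g}$ are defined on the same levels $(k_m)_{m<\om}$, exactly as in the reduction at the start of the proof of Theorem~\ref{thm.PropB}. Put $D=\{\hat{f}(\chi_T\re k_m):T\in\mathfrak{T}\re\tilde{S},\ m<\om\}$ and $\mathcal{D}=f^*{}''(\mathfrak{T}\re\tilde{S})$, a cofinal subset of $\mathcal{V}$; since $f^*$ is continuous on the compact space $\mathfrak{T}\re\tilde{S}$, we get $\overline{\mathcal{D}}=f^*{}''(\mathfrak{T}\re\tilde{S})$, i.e.\ $\mathcal{D}$ is already closed. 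Then I would reprove the analogues of Claims 1--5 of Theorem~\ref{thm.PropB}: a density/compactness argument producing a sequence $(j_m)_{m<\om}$ such that for all $Y\in\mathcal{D}$ and $Z\in\mathfrak{T}\re\tilde{S}$ with $f^*(Z)\re j_m=Y\re j_m$ there is $X\in\mathfrak{T}\re\tilde{S}$ with $f^*(X)=Y$ and $\hat{g}(\chi_X\re k_m)=\hat{g}(\chi_Z\re k_m)$; the ``$g^*=\tilde g$'' comparison (here automatic since $\mathfrak{T}\re\tilde{S}$ is compact and every tree in it sits below one in $\mathfrak{T}\re\tilde{S}$, so property $(*)$ is free); and then the definition of the finitary map $\hat{h}$ on $D'=\{t\in D:\exists m\,(|t|=j_m)\}$ by
\begin{equation*}
\hat{h}(t)(i)=\min\{\hat{g}(\chi_s\re k_{j_m})(i): s\in\mathfrak{T}\re\tilde{S},\ |s\,\text{-level}|=k_{j_m},\ \hat{f}(\chi_s\re k_{j_m})=t\}
\end{equation*}
for $i<m$, showing it is monotone, level and end-extension preserving, and generates $h$ on $\mathcal{D}$. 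Finally, to land in Definition~\ref{defn.finitaryTred} (a statement about a finitary map on $[\om]^{<\om}$, the base of $\mathcal{V}$ being $\om$, not about the tree $\hat B$), I would use the last paragraph of Section~\ref{sec.BasicCofinalFubit}: the basic map $\hat h$ on $\hat B$-data induces $\hat g(s)=\hat h(\hat s)$ on finite subsets of the base, monotone and generating $h$ on $\mathcal{D}$, and this is precisely a finitely generated Tukey reduction; since such a weak-form finitary map extends to one in the sense of Definition~\ref{defn.ctsandfinitary}(2), $\mathcal{V}$ has finitely generated Tukey reductions. Passing from this one $\mathcal{V}\equiv_T$ (cofinal subset) back to $\mathcal{V}$ itself is routine since cofinal subsets carry the reduction.

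The main obstacle I expect is the bookkeeping in the two-level approach: unlike Theorem~\ref{thm.PropB}, here $\mathcal{V}$ is \emph{not} itself presented on a flat-top front, so the finitary map $\hat h$ we build naturally lives on $\hat B$-shaped data ($f^*$ maps trees to subsets of $\om$, and its ``inverse images'' are $\sqsubset$-closed subsets of $\hat B$), and one must carefully transport it down to a genuine monotone finitary map $\hat f:[\om]^{<\om}\to[\om]^{<\om}$ generating $h$ on a cofinal subset of $\mathcal{V}$. The subtlety is that the cofinal subset $\mathcal{D}=f^*{}''(\mathfrak{T}\re\tilde{S})\sse\mathcal{V}$ consists of sets $Y\sse\om$ whose ``restrictions'' $Y\re k_m$ need not determine $\hat f$ unless we index by the levels $(k_m)$ correctly and use that $f$ is level/end-extension preserving; this is where the reindexing of $\hat f,\hat g$ to common levels and the choice of $D'$ via the $j_m$'s must be done with care to ensure $\hat h$ is honestly level preserving on $\{Y\re k_m:Y\in\mathcal{D}\}$ rather than merely on tree-data. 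A secondary (but only notational) difficulty is that several compactness arguments from Theorem~\ref{thm.PropB} are stated for $\overline{\mathcal{C}}$; here the corresponding sets $\mathfrak{T}\re\tilde{S}$ and $\mathcal{D}$ are already compact, so those arguments simplify, but one must check that property $(*)$ — which in Theorem~\ref{thm.PropB} was an extra hypothesis — is genuinely automatic in the tree setting (it is: every $T\in\mathfrak{T}\re\tilde S$ extends its own initial data to a member of $\mathfrak{T}\re\tilde S$, so the needed ``$Z\contains X$ with $Z\re k_m=X\re k_m$'' witnesses exist inside $\mathfrak{T}\re\tilde S$).

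Because Lemma~\ref{lem.ExtCtsMaps} already packages the extension step, and Theorem~\ref{thm.allFubProd_p-point_cts} already packages the basic-representation step, the body of the proof is really just ``run the proof of Theorem~\ref{thm.PropB} with $2^{\om}$ replaced by $2^{\hat B}$ on the domain side and with $\mathcal{C},\overline{\mathcal{C}}$ replaced by the compact $\mathfrak{T}\re\tilde T$,'' followed by the down-translation to $[\om]^{<\om}$ via Remark~\ref{rem.basiccts}. I would therefore present the proof by (i) reducing to $\mathcal{U}$ nonprincipal and invoking Theorems~\ref{thm.allFubProd_p-point_cts} and Lemma~\ref{lem.ExtCtsMaps} to obtain $f^*$; (ii) composing with $\tilde h$ and applying Theorem~\ref{thm.allFubProd_p-point_cts} to $g=h\circ f$; (iii) stating the five claims (Claims 1--5 of Theorem~\ref{thm.PropB}) in the tree setting and remarking that the proofs transfer verbatim with the simplifications noted above; (iv) defining $\hat h$ and concluding it generates $h\re\mathcal{D}$ and is basic on $\hat B$-data; and (v) converting to the required finitary map on $[\om]^{<\om}$ and invoking the remark after Definition~\ref{defn.finitaryTred} to upgrade to Definition~\ref{defn.ctsandfinitary}(2). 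The key step I expect to need the most care is (iii)–(iv) for the precise choice of levels guaranteeing $\hat h$ is level preserving on the base of $\mathcal{V}$.
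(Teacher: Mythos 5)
Your overall strategy is the right one and matches the paper's: obtain a basic representation of $f$ via Theorem~\ref{thm.allFubProd_p-point_cts} and Lemma~\ref{lem.ExtCtsMaps}, compose with the monotone extension $\tilde h$ of $h$, apply Theorem~\ref{thm.allFubProd_p-point_cts} to $g=h\circ f$, define $\hat h(t)$ as the intersection of the $\hat g$-values over the $\hat f$-preimages of $t$, and prove generation by a compactness argument. However, there is one genuine error in how you set up that compactness argument. You assert that $\mathfrak{T}\re\tilde T$ ``is already compact'' and that $\mathcal{D}=f^*{}''(\mathfrak{T}\re\tilde S)$ ``is already closed.'' Neither is true: being a $\vec{\mathcal{U}}$-tree is not a closed condition in $2^{\hat B}$ (a pointwise limit of $\vec{\mathcal{U}}$-trees $T_n\sse\tilde T$ with, say, $U_\emptyset(T_n)=U_\emptyset(\tilde T)\setminus n$ has empty first level, hence is not in $\mathfrak{T}$), so $\mathfrak{T}\re\tilde T$ is a non-closed subset of the compact space $2^{\hat B}$, and its continuous image $\mathcal{D}$ need not be closed either. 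Consequently the subsequence-extraction steps (the analogues of Claims 1 and 2 of Theorem~\ref{thm.PropB}, or the $S_l$-argument) cannot be run inside $\mathfrak{T}\re\tilde S$: the limit you extract may fail to be a $\vec{\mathcal{U}}$-tree. The paper repairs this by working with the closure $\mathcal{C}^*$ of $\hat{\mathcal{C}}=\{\hat X: X=[T],\ T\in\mathfrak{T}\re\tilde T\}$ in $2^{\hat B}$ and with $\overline{\mathcal{D}}=\tilde f''\mathcal{C}^*$, and then verifying the $(*)$-type extension property not just for trees in $\mathfrak{T}\re\tilde T$ (where, as you say, it is free) but for arbitrary members $\hat Z$ of the closure: one must still produce $X\in\mathcal{C}$ with $\hat X\contains\hat Z$ and $\hat X\re k_m=\hat Z\re k_m$. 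This is the step your ``it is automatic'' remark glosses over, and it is exactly where the limit points that are not $\vec{\mathcal{U}}$-trees enter.

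Two further remarks on efficiency rather than correctness. First, the paper does not reproduce the full Claims 1--5 machinery of Theorem~\ref{thm.PropB}: since the target notion here is only ``finitely generated'' (monotone, level preserving, generating) rather than ``basic'' (end-extension preserving), there is no need for the auxiliary sequence $(j_m)_{m<\om}$; the paper defines $\hat h$ directly on each level $m$ and shows $\bigcup_m\hat h(Y\re m)=h(Y)$ by finding, for each $l\in h(Y)$, some level $n_l$ at which $l$ enters $\hat h(Y\re n_l)$ (via finitely many bad nodes $S_l$ and compactness of $\mathcal{C}^*$). Your stronger route would, if the compactness issue is fixed, prove more than is claimed, but at the cost of carrying the end-extension bookkeeping. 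Second, your ``main obstacle'' of transporting $\hat h$ from $\hat B$-shaped data down to $[\om]^{<\om}$ is not actually present: the domain of $\hat h$ is $D=\{\hat f(s):s\in C\}\sse 2^{<\om}$, i.e.\ finite subsets of the base $\om$ of $\mathcal{V}$, not tree data; the tree data is the domain of $\hat f$ and $\hat g$ only. So step (v) of your outline is unnecessary.
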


\begin{proof}
Suppose that  $\mathcal{U}$  is an iteration of Fubini products of p-points and  that $\mathcal{V}\le_T\mathcal{U}$.
Let $B$ be a front which is a base for $\mathcal{U}$, and 
without loss of generality, assume that $\om$ is the base set for the ultrafilter $\mathcal{V}$.
By Theorem \ref{thm.allFubProd_p-point_cts},
$\mathcal{U}$ has basic Tukey reductions.
Applying   Lemma \ref{lem.ExtCtsMaps},
we obtain a  continuous  monotone map $\tilde{f}:2^{\hat{B}}\ra 2^{\om}$
which is
generated by a monotone basic map $\hat{f}:\bigcup_{m<\om}2^{\hat{B}\re k_m}\ra 2^{<\om}$, for some increasing sequence $(k_m)_{m<\om}$.
Hence,
 for each $A\sse\hat{B}$, $\tilde{f}(A)=\bigcup_{m<\om}[[\hat{f}(A\re k_m)]]$.
Furthermore,
defining $f(U)=\tilde{f}(\hat{U})$ for $U\in\mathcal{U}$, we see that
 $f:\mathcal{U}\ra\mathcal{V}$ is a monotone cofinal map.

Suppose $\mathcal{W}\le_T\mathcal{V}$, and let
$h:\mathcal{V}\ra\mathcal{W}$ be a monotone cofinal map.
Extend $h$ to the map $\tilde{h}:2^{\om}\ra 2^{\om}$  defined as follows:
For each $X\in 2^{\om}$, let
\begin{equation}
\tilde{h}(X)=\bigcap\{h(V):V\in\mathcal{V}\mathrm{\ and\ }V\contains X\}.
\end{equation}
It follows from $h$ being monotone that  $\tilde{h}$ is monotone and that $\tilde{h}\re\mathcal{V}=h$.

Letting $\tilde{g}$ denote $\tilde{h}\circ \tilde{f}$,
we see that the map
 $\tilde{g}: 2^{\hat{B}}\ra 2^{\om}$ is monotone.
For $U\in\mathcal{U}$,
$\tilde{g}(\hat{U})=\tilde{h}(\tilde{f}(\hat{U}))=\tilde{h}(f(U))=h\circ f(U)$.
Thus, letting
$g$ denote $h\circ f$, we see that
 $g:\mathcal{U}\ra\mathcal{W}$ is a monotone cofinal map with the property that
 for each $U\in \mathcal{U}$,
$g(U)=\tilde{g}(\hat{U})$.
By Theorem \ref{thm.allFubProd_p-point_cts},
there is a $\vec{\mathcal{U}}$-tree $\tilde{T}$ and an increasing sequence $(k_m)_{m<\om}$  such that
the restriction of $g$ to 
$\mathcal{C}=\{[T]:T\in \mathfrak{T}\re \tilde{T}\}$ is
generated by some
monotone basic  map
$\hat{g}:C\ra 2^{<\om}$,
where
$$
C=\{T\re k_m: T\in\mathfrak{T}\re \tilde{T}\mathrm{\ and \ } m<\om\}.
$$
Notice that $C=\bigcup_{m<\om}2^{\tilde{T}\re k_m}$.
Without loss of generality, we may assume that the levels $k_m$ are the same for $\hat{f}$ and $\hat{g}$, by taking the minimum of the two $m$-th levels.

Let  $g^*$
be the function on $2^{\tilde{T}}$ into $2^{\om}$ determined by $\hat{g}$ as follows:
For $A \in 2^{\tilde{T}}$,
define
\begin{equation}
g^*(A)=\bigcup_{m<\om}\hat{g}(A\re k_m).
\end{equation}
Since $\hat{g}$ is end-extension preserving, it follows  that for each $A\in 2^{\tilde{T}}$ and  $m<\om$,
$g^*(A)\re m=\hat{g}(A\re k_m)$.
We point out that 
the restriction of $g^*$ to 
$\mathfrak{T}\re\tilde{T}$ yields exactly $\tilde{g}$, since $\hat{g}$ generates $\tilde{g}$ on 
$\mathfrak{T}\re\tilde{T}$.

\begin{claim1}
For each $A \in  2^{\tilde{T}}$,
\begin{equation}\label{eq.star=intersection}
g^*(A)=\bigcap\{g(X):X\in\mathcal{C}\mathrm{\ and \ }\hat{X}\contains A\}\contains \tilde{g}(A).
\end{equation}
\end{claim1}

\begin{proof}
Let $A$ be any member of $2^{\tilde{T}}$.
Let $m$ be given and
note that
  $A\re k_m\in C$.
For any $X\in\mathcal{C}$ such that $\hat{X}\re k_m= A\re k_m$, we have
 \begin{equation}
\hat{g}(A\re k_m) = \hat{g}(\hat{X}\re k_m)=
\tilde{g}(\hat{X})\re m=g(X)\re m.
\end{equation}
Since $\mathcal{C}=\{[T]:T\in\mathfrak{T}\re\tilde{T}\}$,
there is  an $X\in\mathcal{C}$ such that $\hat{X}\contains A$ and $\hat{X}\re k_m= A\re k_m$.
(This is the key property of $\mathcal{C}$ needed for this proof.)
Thus,
\begin{equation}\label{eq.56}
[[\hat{g}(A\re k_m)]]
= Y\re m
\contains \tilde{g}(A)\re m,
\end{equation}
where $Y=
\bigcap\{g(X):X\in\mathcal{C}\mathrm{\  and\ }\hat{X}\contains A\}$.
Taking the union over all  $m<\om$ in (\ref{eq.56}) yields the claim.
\end{proof}

Define \begin{equation}
D=\{\hat{f}(s):s\in C\}
\textrm{\ and  \ } \mathcal{D}=f[\mathcal{C}].
\end{equation}
 Then $\mathcal{D}$ is cofinal in $\mathcal{V}$, and every member of $\mathcal{D}$  is a limit of members of $D$.
We
point out that   $\mathfrak{T}\re\tilde{T}$ is a subspace of $2^{\hat{B}}$,
and
 the closure of  $\mathfrak{T}\re\tilde{T}$ in  $2^{\hat{B}}$ is the compact space $2^{\tilde{T}}$.
Define a function $\hat{h}:D\ra 2^{<\om}$ as follows:
For $t\in D\cap 2^m$ and $i\in m$,  define
\begin{equation}
\hat{h}(t)(i)=\min\{\hat{g}(s)(i):s\in C\cap 2^{\hat{B}\re k_m}\mathrm{\ and\ }\hat{f}(s)=t\}.
\end{equation}
That is,
 $\hat{h}(t)$ is the function from $m$ into $2$  such that
for $i\in m$,
$\hat{h}(t)(i)= 1 $ if and only if
$\hat{g}(s)(i)=1$ for all
$s\in C\cap 2^{\hat{B}\re k_m}$ satisfying $\hat{f}(s)=t$.
By definition, $\hat{h}$ is level preserving.

We proceed to prove that
$\hat{h}$ represents $h$ on $\mathcal{D}$.
Fix $Y\in\mathcal{D}$.
Then there is an $X\in\mathcal{C}$ such that $f(X)=Y$.
For each $m<\om$,
 $\hat{f}(\hat{X}\re k_m)=Y\re m$,
so
\begin{align}
[[\hat{h}(Y\re m)]]&\sse [[\hat{g}(\hat{X}\re k_m)]]
=\tilde{g}(\hat{X})\re m \cr
&=g(X)\re m 
= h\circ f (X)\re m=h(Y)\re m.  \cr
\end{align}
Thus, $\bigcup_{m<\om}[[\hat{h}(Y\re m)]]\sse h(Y)$.

Next we show that for each $l\in h(Y)$, there is some $n$ such that $l\in [[\hat{h}(Y\re n)]]$.
Let $m=l+1$ and let $t=Y\re m$ .
Let
\begin{equation}
S_l=\{s\in C\cap 2^{\hat{B}\re k_m}:  \forall A\in 2^{\tilde{T}}\mathrm{\ with \ }
 s\sqsubset A,\  \tilde{f}(A)\ne Y\}.
\end{equation}

\begin{claim2}
For each $s\in S_l$,
there is an $n_s$ such that each $s'\in 2^{\hat{B}\re n_s}$ with 
$s'\sqsupset s$ 
satisfies
 $\hat{f}(s')\not\sqsubset Y$.
 \end{claim2}

\begin{proof}
If not, then for some $s\in S_l$, for each $n$ there is an $s'\in C$ of length $n$ with $s'\sqsupseteq s$
and some $A\in 2^{\tilde{T}}$ with
$A\sqsupset s'$ 
such that 
 $\tilde{f}(A)=Y$.
By K\"{o}nig's Lemma, there is a sequence $s\sqsubset s_0\sqsubset s_1\sqsubset\dots$ of strictly increasing lengths such that
for each $i<\om$, there is some $A_i\in 2^{\tilde{T}}$ such that $A_i\sqsupset s_i$ and $\tilde{f}(A_i)=Y$.
Letting $A'=\bigcup_{i<\om}s_i$,
we see that $A'\in 2^{\tilde{T}}$ and that $\tilde{f}(A')=Y$, by continuity of $f$ and the fact that $\hat{f}$ generates $\tilde{f}$ on $2^{\tilde{T}}$.
But this contradicts the fact that
 $s\in S_l$.
\end{proof}

Since $S_l$ is finite, we may take $n=\max\{n_s:s\in S_l\}$.
Then for all $s'$ of length $k_n$, if $\hat{f}(s')\sqsubset Y$
then $s'$ does not end-extend any $s$ in $S_l$.

\begin{claim3}
For all $s'\in C$ of length $k_n$ such that $\hat{f}(s')\sqsubset Y$,
 $l\in [[\hat{g}(s')]]$.
 \end{claim3}

\begin{proof} 
For each $s'\in C\cap 2^{\hat{B}\re k_n}$
satisfying $\hat{f}(s')= Y\re n$, we see that
$s'\re k_m$ is not in $S_l$.
So there is some $A\in 2^{\tilde{T}}$ such that $A\sqsupset s'\re k_m$ and $\tilde{f}(A)=Y$.
It follows that
\begin{equation}
[[\hat{g}(s')\re m]]=[[\hat{g}(s'\re k_m)]]\contains \tilde{g}(A)\re m=\tilde{h}\circ\tilde{f}(A)\re m=h(Y)\re m,
\end{equation}
where the $\contains$ follows from Claim 1,
since $[[\hat{g}(s'\re k_m)]]=g^*(A)\re m \contains \tilde{g}(A)\re m$.
Thus,  $l\in [[\hat{g}(s')]]$, for each $s'\in 2^{\tilde{T}\re k_n}$
satisfying $\hat{f}(s')= Y\re n$.
\end{proof}

Therefore, $l\in [[\hat{h}(Y\re n)]]$.
Thus, for each $l\in h(Y)$, there is an $n_l$ such that $l\in [[\hat{h}(Y\re n_l)]]$.
It follows that, for any $j<\om$,
there is an $n$ such that $\hat{h}(\chi_Y\re n)\re j=h(Y)\re j$.
This $n$ may be obtained by
taking the maximum of  the $n_s$ over all $s\in \bigcup\{S_l:l<j\}$,
Hence, $\bigcup_{m<\om}[[\hat{h}(Y\re m)]]=h(Y)$.

Thus,  $h\re \mathcal{D}$ is finitely represented by $\hat{h}$ on $D$.
\end{proof}

Theorem \ref{thm.FinitelyRepTukeyRed} is now applied to extend
 Theorem 17 of Raghavan in \cite{Raghavan/Todorcevic12}
to all ultrafilters Tukey reducible to some Fubini iterate of p-points.
Raghavan showed
 that for any basically generated ultrafilter $\mathcal{U}$, whenever $\mathcal{V}\le_T\mathcal{U}$  there is a filter $\mathcal{U}(P)$ which is Tukey equivalent to $\mathcal{U}$ such that $\mathcal{V}\le_{RK}\mathcal{U}(P)$.
It is routine to check that the maps in
 Theorem \ref{thm.FinitelyRepTukeyRed}
satisfy the conditions of the maps in
Theorem 17 of Raghavan in \cite{Raghavan/Todorcevic12}.
Thus, we obtain the following.

\begin{thm}\label{thm.rextension}
If $\mathcal{U}$ is Tukey reducible to a  Fubini iterate of  p-points, then
for each $\mathcal{V}\le_T\mathcal{U}$, there is a filter $\mathcal{U}(P)\equiv_T\mathcal{U}$ such that $\mathcal{V}\le_{RK}\mathcal{U}(P)$.
\end{thm}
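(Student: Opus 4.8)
The plan is to combine Theorem~\ref{thm.FinitelyRepTukeyRed} with Theorem~17 of Raghavan in \cite{Raghavan/Todorcevic12} in the obvious way, the only work being to verify that the finitary maps produced by Theorem~\ref{thm.FinitelyRepTukeyRed} satisfy the hypotheses required by Raghavan's construction of the filter $\mathcal{U}(P)$. First I would reduce to the case where $\mathcal{U}$ is itself a Fubini iterate of p-points: since $\mathcal{U}\le_T\mathcal{W}$ for some such $\mathcal{W}$, and since $\mathcal{V}\le_T\mathcal{U}\le_T\mathcal{W}$ implies $\mathcal{V}\le_T\mathcal{W}$, it suffices to prove the statement with $\mathcal{W}$ in place of $\mathcal{U}$; relabeling, we may assume $\mathcal{U}$ is a Fubini iterate of p-points. (Strictly, the statement then follows for the original $\mathcal{U}$ because $\mathcal{U}(P)\equiv_T\mathcal{W}\equiv_T\mathcal{U}$ once one also invokes $\mathcal{U}\equiv_T\mathcal{W}$; so in fact one should assume a two-sided Tukey equivalence, i.e. replace $\mathcal{U}$ by a Tukey-equivalent Fubini iterate, which is legitimate since $\mathcal{U}\equiv_T$ its witnessing iterate is not automatic --- here one should simply carry $\mathcal{W}$ along and note $\mathcal{U}(P)\equiv_T\mathcal{W}$, then observe the theorem is really about the Tukey type.)

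\textbf{Main steps.} Let $\mathcal{V}\le_T\mathcal{U}$ with $\mathcal{U}$ a Fubini iterate of p-points, and fix a monotone cofinal map $f\colon\mathcal{U}\ra\mathcal{V}$. By Theorem~\ref{thm.FinitelyRepTukeyRed}, $\mathcal{V}$ has finitely generated Tukey reductions; applying this, or more directly applying Theorem~\ref{thm.allFubProd_p-point_cts} together with the Extension Lemma~\ref{lem.ExtCtsMaps} as in the proof of Theorem~\ref{thm.FinitelyRepTukeyRed}, I obtain a cofinal subset $\mathcal{D}\sse\mathcal{U}$, an increasing sequence $(k_m)_{m<\om}$, and a monotone, level- and end-extension-preserving map $\hat{f}$ on $\{X\re k_m: X\in\mathcal{D},\ m<\om\}$ which generates $f\re\mathcal{D}$. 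The next step is to check that these data meet the input requirements of Raghavan's Theorem~17: namely that one has a filter base for $\mathcal{U}$ together with a finitary ``approximation'' of $f$ that is monotone and respects initial segments in the appropriate sense, so that the poset $P$ of finite approximations, and the derived filter $\mathcal{U}(P)$, can be formed exactly as in \cite{Raghavan/Todorcevic12}. Since our $\hat{f}$ is precisely such an approximation --- and in fact a stronger one, because it \emph{generates} $f$ rather than merely dominating some other cofinal map --- this verification is routine. Feeding the data into Raghavan's construction yields a filter $\mathcal{U}(P)$ with $\mathcal{U}(P)\equiv_T\mathcal{U}$ and a finite-to-one (Rudin--Keisler) map witnessing $\mathcal{V}\le_{RK}\mathcal{U}(P)$.

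\textbf{Main obstacle.} The genuinely delicate point is the bookkeeping in the previous paragraph: one must confirm that the notion of ``finitely generated Tukey reduction'' in Definition~\ref{defn.finitaryTred} supplies \emph{exactly} the combinatorial object that Raghavan's argument consumes, and that no extra hypothesis of his (e.g. a compatibility or coherence condition among the finite approximations, or the particular indexing he uses for the poset $P$) is being silently assumed. I expect this to reduce to translating between his Definition~7 (the maps $\psi_\varphi$) and our $\hat{f}$: since our $\hat{f}$ is level- and end-extension-preserving and monotone, it determines a coherent family of finite partial maps, which is precisely what is needed, and the ``generates'' clause is strictly stronger than what he requires. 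Once that dictionary is in place, the filter $\mathcal{U}(P)$ and the Rudin--Keisler reduction $\mathcal{V}\le_{RK}\mathcal{U}(P)$ are produced verbatim as in the proof of Theorem~17 of \cite{Raghavan/Todorcevic12}, and the Tukey equivalence $\mathcal{U}(P)\equiv_T\mathcal{U}$ is part of that same theorem's conclusion. This completes the proof.
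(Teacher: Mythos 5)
There is a genuine gap, and it is in your very first step. The reduction to the case that $\mathcal{U}$ is itself a Fubini iterate of p-points does not work: if $\mathcal{U}\le_T\mathcal{W}$ with $\mathcal{W}$ a Fubini iterate, proving the statement with $\mathcal{W}$ in place of $\mathcal{U}$ yields a filter with $\mathcal{U}(P)\equiv_T\mathcal{W}$, whereas the theorem demands $\mathcal{U}(P)\equiv_T\mathcal{U}$, and $\mathcal{W}\le_T\mathcal{U}$ is simply not available. Your parenthetical concedes this and then waves it away with ``the theorem is really about the Tukey type'' --- but it is about the Tukey type of $\mathcal{U}$, which may be strictly below that of $\mathcal{W}$. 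The entire content of Theorem \ref{thm.rextension} beyond Raghavan's Theorem 17 lies exactly in the case you discard: $\mathcal{U}$ strictly Tukey-below a Fubini iterate (and possibly not basically generated).

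The correct route, and the one the paper takes, is to apply Theorem \ref{thm.FinitelyRepTukeyRed} to $\mathcal{U}$ itself: since $\mathcal{U}\le_T\mathcal{W}$ for some Fubini iterate $\mathcal{W}$ of p-points, $\mathcal{U}$ has finitely generated Tukey reductions. Hence any monotone cofinal $h:\mathcal{U}\ra\mathcal{V}$ is generated on a cofinal $\mathcal{D}\sse\mathcal{U}$ by a monotone finitary $\hat{h}$ defined on finite subsets of $\om$ (the base of $\mathcal{U}$, taken to be $\om$ without loss of generality), and it is this $\hat{h}$ that feeds into Raghavan's construction: $P$ is the set of $\sqsubset$-minimal finite $s$ with $\hat{h}(s)\ne\emptyset$, and $\mathcal{U}(P)=\{\{s\in P:s\sse U\}:U\in\mathcal{U}\}$, which gives $\mathcal{U}(P)\equiv_T\mathcal{U}$ and $\mathcal{V}\le_{RK}\mathcal{U}(P)$. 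Note also that your invocation of Theorem \ref{thm.FinitelyRepTukeyRed} points the wrong way: the conclusion that ``$\mathcal{V}$ has finitely generated Tukey reductions'' concerns cofinal maps \emph{from} $\mathcal{V}$ to ultrafilters below it, and cannot by itself produce a finitary generator of a map $f:\mathcal{U}\ra\mathcal{V}$; what is needed is that $\mathcal{U}$, the domain, has this property. Your final paragraph (the dictionary between the finitary generator and Raghavan's maps $\psi_{\varphi}$) is the right remaining verification and matches the paper, but it must be run on $\mathcal{U}$, not on $\mathcal{W}$ and not via $\mathcal{V}$.
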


Here, assuming without loss of generality that the base set for $\mathcal{U}$ is $\om$,
$P$ is the collection of $\sqsubset$-minimal finite subsets $s$ of $\om$  for which  $\hat{h}(s)\ne\emptyset$, where $\hat{h}$ witnesses that a given monotone cofinal $h:\mathcal{U}\ra\mathcal{V}$ is finitely generated.
$\mathcal{U}(P)$ is the collection of all sets of the form $\{s\in P:s\sse U\}$, for $U\in\mathcal{U}$.

\begin{rem}
The  same proof of Theorem \ref{thm.FinitelyRepTukeyRed}
works for the
basic cofinal maps for the generic ultrafilters $\mathcal{G}_k$ forced by $\mathcal{P}(\om^k)/\Fin^{\otimes k}$, $2\le k<\om$, in \cite{DobrinenJSL15}.
Thus, Theorem
 \ref{thm.rextension}
also  holds when $\mathcal{U}$ is an ultrafilter forced by $\mathcal{P}(\om^k)/\Fin^{\otimes k}$.
\end{rem}


\section{Open problems}\label{sec.op}

We conclude this paper by highlighting  some of the more important open  problems in this area.
 Theorem \ref{thm.main} showed
 that every ultrafilter Tukey below a p-point has  continuous Tukey reductions.

\begin{problem}
Determine the class of all ultrafilters that have continuous Tukey reductions.
\end{problem}

In particular, are  there ultrafilters not Tukey reducible to a p-point which
 satisfy the conditions of Theorem \ref{thm.PropB}?

By Theorem  56 of Dobrinen and 
Trujillo\footnote[2]{Attributions of the results  of the various authors  are made clear in \cite{Dobrinen/Mijares/Trujillo14}.} in \cite{Dobrinen/Mijares/Trujillo14}, under very mild conditions,
any ultrafilter selective for some topological Ramsey space has  basic, an hence, continuous (with respect to  its metric topology) Tukey reductions.
This is especially of interest when the ultrafilter associated with the topological Ramsey space is not a p-point.
It should be the case that by arguments similar to those in this paper one can prove the following.

\begin{problem}
Prove the analogues of Theorems  \ref{thm.PropB}, \ref{thm.allFubProd_p-point_cts}, and \ref{thm.FinitelyRepTukeyRed}  for stable ordered union ultrafilters and their iterated Fubini products, and more generally for ultrafilters selective for some topological Ramsey space, with respect to the correct topologies.
\end{problem}

More generally, we would like to know the following.

\begin{problem}
Determine the class of all ultrafilters which have finitely generated Tukey reductions.
Is this the same as the class of all ultrafilters with Tukey type strictly below the maximum Tukey type?
\end{problem}

In Section \ref{sec.directapp}, we applied Theorem  \ref{thm.RBonBhat}
to obtain  more examples when Tukey reducibility  implies Rudin-Keisler reducibility.
 Theorem \ref{thm.RBgivesRK}  improves on one aspect of
Corollary 56  in\cite{Raghavan/Todorcevic12} of Raghavan  provided that there are q-points which are not selective and which are Tukey below some Fubini iterate of p-points.
Do such ultrafilters ever exist?

\begin{problem}
Is there a q-point which is not selective which is Tukey reducible to some finite Fubini iterate of p-points?
Or does $\mathcal{V}\le_T\mathcal{U}$ with $\mathcal{U}$ a Fubini iterate of p-points and $\mathcal{V}$ a q-point imply that $\mathcal{V}$ is actually selective?
\end{problem}

\begin{problem}
Can Theorem  \ref{thm.RBgivesRK} be extended to all countable iterates of Fubini products of p-points?
Are  similar results true  for all ultrafilters Tukey reducible to some Fubini iterate of p-points?
\end{problem}

Question 25 in  \cite{Dobrinen/Todorcevic11}
asks whether asks whether every ultrafilter Tukey reducible to a p-point is basically generated.
Question 26 in \cite{Dobrinen/Todorcevic11}
asks whether
 the classes of basically generated and Fubini  iterates of p-points the same, or whether the former is  strictly larger than the latter?
Though these questions in general are still open, we ask the even more general questions.

\begin{problem}\label{prob.bgTdownwardinherit}
Is the property of being basically generated inherited under Tukey reduction?
That is, if $\mathcal{V}$ is Tukey reducible to a basically generated ultrafilter, is $\mathcal{V}$  necessarily basically generated?

Or the possibly weaker problem:
 If $\mathcal{V}$ is Tukey reducible to some Fubini iterate of p-points, is $\mathcal{V}$  necessarily basically generated?
\end{problem}

There are certain collections of p-points, in particular those associated with the  topological Ramsey spaces in  \cite{Dobrinen/Todorcevic14}, \cite{Dobrinen/Todorcevic15}, and  \cite{Dobrinen/Mijares/Trujillo14},
for which every ultrafilter Tukey below some Fubini iterate of these p-points is again a Fubini iterate of these p-points and hence basically generated.
However, the above questions are in general still open.

Work in this paper and work in \cite{Raghavan/Todorcevic12} found conditions when Tukey reducibility implies Rudin-Keisler or even Rudin-Blass reducibility.

\begin{problem} When in general does $\mathcal{U}\ge_T\mathcal{V}$ imply $\mathcal{U}\ge_{RK}\mathcal{V}$ or $\mathcal{U}\ge_{RB}\mathcal{V}$?
\end{problem}


Finally, how closely related are the properties of having finitely generated Tukey reductions and having Tukey type below the maximum?

\begin{problem}
Does $(\mathcal{U},\contains)<_T ([\om]^{<\om}),\sse)$
imply that $\mathcal{U}$ has finitely generated Tukey reductions?
\end{problem}



\bibliographystyle{amsplain}
\bibliography{references}

\end{document}